\newtheorem{theorem}{Theorem}[section]
\newtheorem*{thmA*}{Theorem A}
\newtheorem*{thmB*}{Theorem B}
\newtheorem*{lemA*}{Lemma A}
\newtheorem{corollary}[theorem]{Corollary}
\newtheorem{definition}{Definition}
\newtheorem{lemma}{Lemma}[section]
\newtheorem{prop}{Proposition}[section]
\newtheorem{proposition}[theorem]{Proposition}
\newtheorem{remark}{Remark}
\newtheorem*{PropertyA*}{Property A}
\newtheorem*{assumptionA*}{Assumption A}
\newtheorem*{assumptionB*}{Assumption B}
\newtheorem*{assumptionC*}{Assumption C}
\newtheorem*{assumptionOmega*}{Assumption $\Omega$ }
\newtheorem{Winfree Model}{Winfree Model}
\newcommand{\R}{\mathbb R}
\newcommand{\diam}{\text{\rm diam}}
\newcommand{\gain}{\text{\rm gain}}
\newcommand{\loss}{\text{\rm loss}}
\def\be{\begin{equation}}
\def\ee{\end{equation}}
\DeclareMathOperator*{\esssup}{ess\,sup}
\def\R{\mathbb R}
\numberwithin{equation}{section} \numberwithin{theorem}{section}
\numberwithin{figure}{section}
\begin{document}
\bibliographystyle{siam}

\title[]
{On the Existence and Regularity for Stationary Boltzmann Equation in a Small Domain}

\author[I-K.~Chen, C.-H.~Hsia, D.~Kawagoe and J.-K.~Su]{I-KUN CHEN, CHUN-HSIUNG HSIA, DAISUKE KAWAGOE AND JHE-KUAN SU}

\date{\today}

\begin{abstract}
In this article, we study the stationary Boltzmann equation with the incoming boundary condition for the hard potential cases. Assuming the smallness of the domain and a suitable normal curvature condition on the boundary, we find a suitable solution space which is a proper subset of the $W^{1,p}$ space for $1\leq p < 3$. 
\end{abstract}

\maketitle

\section{Introduction}

The Boltzmann equation, which reads as
\begin{equation} \label{Boltzmann equation time evolutional}
\frac{\partial F}{\partial t} + v \cdot \nabla_{x} F = Q(F,F), \quad (t, x, v) \in \R \times \Omega \times \R^3 ,
\end{equation}
describes the dynamical behavior of rarefied gas molecular in the space domain $\Omega$ in $\mathbb{R}^3$ with velocity in $\mathbb{R}^3$ at time $t$. The function $Q$ is called the collision operator and it is defined by
\begin{equation} \label{collision operator}
Q(F,G):=\int_{\mathbb{R}^{3}}\int_{0}^{2\pi}\int_{0}^{\frac{\pi}{2}}[F(v')G(v_{*}')-F(v)G(v_{*})]B(\vert v-v_{*} \vert , \theta)d\theta d\phi dv_{*},
\end{equation}
where
\begin{align*}
&v':=v+((v_*-v)\cdot \omega)\omega,\,
v_*':=v_*-((v_*-v)\cdot \omega)\omega, \\
&\omega:=\cos{\theta} \frac{v_*-v}{|v_*-v|} + (\sin{\theta}\cos{\phi}) e_2 + (\sin{\theta}\sin{\phi}) e_3.
\end{align*}
Here, $\phi$ and $\theta$ are real values satisfying $0\leq \phi \leq 2 \pi$ and $\ 0 \leq \theta \leq \pi/2$, and the vectors $e_2$, $e_3$ are chosen so that the pair $\{\frac{v_*-v}{|v_*-v|}, e_2, e_3\}$ forms an orthonormal basis. The cross section $B$ describes the interaction between two gas particles when they collide. 

In this article, we consider the existence and regularity theory of the stationary Boltzmann equation
\begin{equation} \label{Boltzmann equation original}
 v \cdot \nabla_{x} F = Q(F,F), \quad (x, v) \in \Omega \times \R^3
\end{equation}
with the incoming boundary condition
\begin{equation} \label{boundary condition original}
F(x,v)=G(x,v),\quad (x,v)  \in \Gamma^-.
\end{equation}
Here, the outgoing and incoming boundaries are defined $\Gamma^\pm := \{ (x,v)\in \Omega \times \mathbb{R}^3 \mid \pm n(x)\cdot v > 0 \}$ with $n(x)$ the outward unit normal of $\partial \Omega$ at $x$. 

When we consider the fluctuation of the solution to the equation \eqref{Boltzmann equation original} from the Maxwellian potential $M:=\pi^{-\frac{3}{2}}e^{-|v|^2}$, that is, $F=M+M^{\frac{1}{2}}f$ and $G = M + M^{\frac{1}{2}}g$, the problem is reduced to the equation
\begin{equation} \label{Boltzmann equation}
 v \cdot \nabla_{x}f = L(f) + \Gamma(f,f), \quad (x, v) \in \Omega \times \R^3
\end{equation}
with the incoming boundary condition
\begin{equation} \label{boundary condition}
f(x,v)=g(x,v),\quad (x,v)  \in \Gamma^-,
\end{equation}
where the operators $L$ and $\Gamma$ are defined by
\[
L(h):= M^{-\frac{1}{2}}\left(Q(M,M^{\frac{1}{2}}h)+Q(M^{\frac{1}{2}}h,M))\right)
\]
and
\[
\Gamma(h_1,h_2):= M^{-\frac{1}{2}}Q(M^{\frac{1}{2}}h_1,M^{\frac{1}{2}}h_2),
\]
respectively. Instead of discussing the boundary value problem \eqref{Boltzmann equation original}-\eqref{boundary condition original}, we shall discuss the boundary value problem \eqref{Boltzmann equation}-\eqref{boundary condition}.

The existence and regularity issues of \eqref{Boltzmann equation} have attracted the attention of many authors. In the 1970s, Guiraud proved the existence of solutions to the stationary Boltzmann equation for both linear and nonlinear cases in convex domains \cite{Guir 1, Guir 2}. For general domains, Esposito, Guo, Kim and Marra proved the existence result for the nonlinear case with the diffuse reflection boundary condition \cite{Eso 1}. Since then, various results had been developed for different boundary conditions. For instance, with some conditions on the boundary, a $C^1$ time-dependent solution to the Boltzmann equation with external fields is constructed in \cite{Cao 1, Cao 2}. In \cite{Guo 1}, a $W^{1,p}$ time-evolutionary solution for $1<p<2$ and a weighted $W^{1,p}$ solution for $2\leq p \leq \infty$ are constructed. In \cite{Ikun 3}, a point-wise estimate for the first derivatives of a solution to the linearized Boltzmann equation with the diffuse reflection boundary condition has been achieved. The space $H^{1-}$ regularity of solutions for the linear case with the incoming boundary condition was achieved in \cite{Ikun 2}. In \cite{ChenKim 1}, a $ C^{ 1,\beta }$ solution to the stationary Boltzmann equation with the diffuse reflection boundary condition is constructed. Recently, in \cite{ChenKim 2}, the existence of the $W^{1,p}_x$ solution of Boltzmann equation with the diffuse reflection boundary condition for $1\leq p < 3$ is proved, which only established the space regularity. %However, in general, whether the gradient term over velocity $\nabla_v f$ belongs to $L^2$ remains an open problem.

Nevertheless, for the linearized Boltzmann equation, the authors established the $H^1$ regularity, for both space and velocity variables, for a small domain with the positive Gaussian curvature \cite{Ikun 1}. In \cite{Ikun 1'}, the effect of the geometry on the regularity is investigated. The authors prove the existence of solutions in $W^{1,p}$ spaces for $1\leq p<2$. In contrast, if the positivity of the Gaussian curvature on the boundary is imposed, the authors prove the existence of solutions in $W^{1, p}$ spaces for $1 \leq p < 3$.  In both cases, counterexamples are also provided. 

In this article, with more conditions like uniform sphere conditions for the space domain and the smallness of the incoming data, we derive a point-wise estimate over the gradient of the solution of the nonlinear Boltzmann equation, by which we can derive a $W^{1,p}$ solution for $1 \leq p <3$.

To study the existence of a solution to the Boltzmann equation, we regard it as the linearized Boltzmann equation with a source term:
\begin{equation*} 
 v \cdot \nabla_{x}f = L(f)+\phi
\end{equation*}
with $\phi=\Gamma(f,f)$. Then we apply the following iteration scheme 
\begin{equation} \label{Nonlinear Boltmann iteration scheme}
 v \cdot \nabla_{x}f_{i+1} = L(f_{i+1}) + \Gamma(f_{i},f_{i})
\end{equation}
to prove that the sequence $\{f_{i}\}$ converges to a solution of the nonlinear Boltzmann equation.

Next, we state the detailed setup of our problem. In this article, we consider the following cross section $B$ in \eqref{collision operator}: 
\begin{equation} \label{assumption_B1}
B(|v-v_*|,\theta) = C|v-v_*|^{\gamma}\sin{\theta}\cos{\theta}
\end{equation}
for some $C>0$ and $0 \leq \gamma \leq 1$. The range of $\gamma$ corresponds to the hard potential cases. Throughout this article, we adopt a convenient notation: 
We denote $f\lesssim g$ if there exists a constant $C\geq 0$ such that $f \leq Cg$. Under the assumption \eqref{assumption_B1}, as have been observed by many authors, the linear operator $L(f)$ satisfies the following property.
\begin{PropertyA*}
The linear operator $L(f)$ can be decomposed as
\begin{equation*}
L(f)(x,v)=-\nu (v)f(x,v)+K(f)(x,v),
\end{equation*}
where
\begin{equation*}
K(f)(x,v):=\int_{\mathbb{R}^3}k(v,v_*)f(x,v_*)\,dv_*.
\end{equation*}
Here, $\nu (v)$ and $k(v,v_*)$ satisfy the following estimates
\begin{align}
\label{nu decay estimate}
& (1+|v|)^{\gamma} \lesssim \nu(v) \lesssim (1+|v|)^{\gamma},\\
\label{gradient nu decay estimate}
&|\nabla\nu(v)|\lesssim (1+|v|)^{\gamma - 1},\\
\label{K decay estimate}
&|k(v,v_*)|\lesssim \frac{1}{|v-v_*|(1+|v|+|v_*|)^{1-\gamma}}e^{-\frac{1-\rho}{4}\left(|v-v_*|^2+ \left( \frac{|v|^2-|v_*|^2}{|v-v_*|} \right)^2 \right)},\\
\label{gradient v K decay estimate}
&|\nabla_v k(v,v_*)|\lesssim \frac{1+|v|}{|v-v_*|^2(1+|v|+|v_*|)^{1-\gamma}}e^{-\frac{1-\rho}{4}\left(|v-v_*|^2+ \left( \frac{|v|^2-|v_*|^2}{|v-v_*|} \right)^2 \right)}
\end{align}
for $0 \leq \rho < 1$. 
\end{PropertyA*}
We remark that, under the assumption \eqref{nu decay estimate}, the function $\nu$ is uniformly positive;
\[
\inf_{v \in \R^3} \nu(v) > 0
\]
for all $0 \leq \gamma \leq 1$, which plays a key role in our analysis.

Next, we introduce the following notations:
\begin{align*}
\tau_{x,v}:=&\inf\{s\geq 0\mid x-sv\in \Omega^c\},\\   
q(x,v):=&x-\tau_{x,v}v,\\
N(x,v):=&-n(q(x,v))\cdot \frac{v}{|v|}.
\end{align*}
Also, we define
\begin{align*}
&| f |_{\infty,\alpha} := \esssup_{(x, v) \in \Omega \times \R^3} e^{\alpha |v|^{2}} |f(x, v)|, \\
&w(x,v) := \frac{|v|}{|v|+1}N(x,v),\\
&| f |_{\infty,\alpha, w}:= | w f |_{\infty, \alpha},\\
&\| f \|_{\infty,\alpha}:=| f|_{\infty,\alpha}+|\nabla_{x}f|_{\infty,\alpha, w}+|\nabla_{v}f|_{\infty,\alpha, w},\\
&L^{\infty}_{\alpha}:=\{f \mid | f |_{\infty,\alpha}<\infty\},\\
&\hat{L}^{\infty}_{\alpha}:=\{f \mid || f||_{\infty,\alpha}<\infty\},
\end{align*}
where $\alpha \geq 0$. 

We assume that the space domain $\Omega$ has the following property.
\begin{assumptionOmega*}
The space domain $\Omega$ is a bounded domain with $C^2$ boundary of positive Gaussian curvature.
\end{assumptionOmega*}
Notice that the positivity of Gaussian curvature implies non-vanishing principle curvatures.

In addition, we introduce uniform circumscribed and interior sphere conditions.

\begin{definition}    
Given $\Omega \subset \R^3 $, we say that the boundary of $\Omega$ satisfies the uniform circumscribed sphere condition if there exists a positive constant $R$ such that for any $x \in \partial \Omega$ there exists a ball $B_R$ with radius $R$ such that 
\[
x\in \partial B_R, \ \bar{\Omega} \subset \bar{B}_R.
\]
We call the constant $R$ the uniform circumscribed radius.
\end{definition}

\begin{definition}    
Given $\Omega \subset \R^3 $, we say that the boundary of $\Omega$ satisfies the uniform interior sphere condition if there exists a positive constant $r$ such that for any $x \in \partial \Omega $ there exists a ball $B_r$ with radius $r$ such that 
\[
x\in \partial B_r, \ \bar{B}_r\subseteq \overline{\Omega}.
\]
We call the constant $r$ the uniform interior radius.
\end{definition}

It is known that, if the domain $\Omega$ satisfies \textbf{Assumption $\Omega$}, then the boundary of $\Omega$ satisfies both uniform circumscribed and interior sphere conditions. For details, see \cite{Ikun 2}.      

The main result of this article is the following theorem.
\begin{theorem}\label{main theorem nonlinear}
Suppose \eqref{assumption_B1} holds. Given $0 \leq \alpha < (1-\rho)/2$, where $\rho$ is the constant in \textbf {Property A}, there exists a positive constant $\delta$ such that: For any domain $\Omega$ satisfying \textbf{Assumption $\Omega$} with uniform circumscribed and interior radii $R$ and $r$ respectively, if 
\begin{equation} \label{est_smallness}
\max \left\{ \diam(\Omega),(Rr)^{\frac{1}{2}} \left( 1+\frac{R}{r} \right), \| e^{-\nu(v)\tau_{x,v}}g(q(x,v),v) \|_{\infty,\alpha} \right\} < \delta,
\end{equation}
then the equation \eqref{Boltzmann equation} with the incoming boundary condition \eqref{boundary condition} admits a solution in $\hat{L}^{\infty}_{\alpha}$. 
\end{theorem}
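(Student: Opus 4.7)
The plan is to construct the solution through the Picard iteration \eqref{Nonlinear Boltmann iteration scheme}, treating each step as a linear problem $v\cdot\nabla_x f_{i+1}-L(f_{i+1})=\Gamma(f_i,f_i)$ with source $\phi_i:=\Gamma(f_i,f_i)$. The proof therefore has two ingredients: a pointwise solvability theory in $\hat{L}^\infty_\alpha$ for the linear equation with source, quantified both by the boundary datum and by $\phi_i$, and a quadratic bound $\|\Gamma(f,f)\|_{\infty,\alpha}\lesssim \|f\|_{\infty,\alpha}^2$ compatible with that linear theory. Together these let one close a Banach fixed-point argument on a small ball of $\hat{L}^\infty_\alpha$, whose radius is governed by the smallness parameters in \eqref{est_smallness}.

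For the linear estimate, I would work with the mild formulation along the straight characteristics after splitting $L=-\nu+K$. Because \eqref{nu decay estimate} gives $\inf \nu>0$, the boundary piece decays exponentially along trajectories and is majorized by $\|e^{-\nu(v)\tau_{x,v}}g(q(x,v),v)\|_{\infty,\alpha}$, while the source contribution acquires a factor $\diam(\Omega)$ which shrinks it once $\delta$ is small. The $K$ part is then absorbed as a Neumann perturbation: using \eqref{K decay estimate} with the choice $\alpha<(1-\rho)/2$, the operator $(v\cdot\nabla_x+\nu)^{-1}K$ has small norm on $L^\infty_\alpha$ once $\diam(\Omega)$ is small, producing the $|\cdot|_{\infty,\alpha}$ bound on $f$. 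For the weighted gradient bounds, one differentiates the Duhamel formula; this produces a boundary contribution carrying the Jacobian $1/N(x,v)$ of the map $(x,v)\mapsto(q(x,v),v)$ and a volume contribution of the form $\tau_{x,v}/N(x,v)$. The weight $w(x,v)=\frac{|v|}{|v|+1}N(x,v)$ is tailored precisely to cancel this $1/N$ singularity pointwise, while the geometric input from \textbf{Assumption $\Omega$} and the uniform sphere conditions converts the remaining factor $\tau_{x,v}\,N(q(x,v),v)/N(x,v)$ into a quantitative bound scaling like $(Rr)^{1/2}(1+R/r)$, matching the second entry of \eqref{est_smallness}; derivatives of $\nu$ and $K$ are controlled in parallel using \eqref{gradient nu decay estimate} and \eqref{gradient v K decay estimate}.

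The nonlinear term is handled by the classical Gaussian-symmetry computation yielding $|\Gamma(f_1,f_2)|_{\infty,\alpha}\lesssim |f_1|_{\infty,\alpha}|f_2|_{\infty,\alpha}$; since $\Gamma$ is local in $x$, the weighted gradient passes through multiplicatively and gives $\|\Gamma(f,f)\|_{\infty,\alpha}\lesssim \|f\|_{\infty,\alpha}^2$ together with the corresponding bilinear bound for differences. Combining this with the linear estimate produces $\|f_{i+1}\|_{\infty,\alpha}\le C_1\|e^{-\nu(v)\tau_{x,v}}g(q(x,v),v)\|_{\infty,\alpha}+C_2\|f_i\|_{\infty,\alpha}^2$ and a matching contraction estimate for $\|f_{i+1}-f_i\|_{\infty,\alpha}$; choosing $\delta$ sufficiently small forces the iterates to remain in a small ball and to be Cauchy, with the limit in $\hat{L}^\infty_\alpha$ solving \eqref{Boltzmann equation}--\eqref{boundary condition}. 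I expect the main technical obstacle to lie in isolating the geometric dependence of the linear gradient estimate: the pointwise control of $\tau_{x,v}\,N(q(x,v),v)/N(x,v)$ by $(Rr)^{1/2}(1+R/r)$ is what couples the smallness of the domain to the smallness needed to close the iteration, and it requires a careful use of the interior and circumscribed sphere conditions on near-tangential trajectories, where a naive bound produces only $\diam(\Omega)/N$, which fails both on the integrability side and against the nonlinear term.
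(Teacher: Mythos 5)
Your overall architecture matches the paper's: treat each iterate as a linear problem in mild (Duhamel) form, prove an $\hat L^\infty_\alpha$ solvability estimate for that linear problem that degrades gracefully in $\diam(\Omega)$ and the sphere radii, prove a bilinear estimate for the collision term, and close a contraction. Two of the intermediate claims, however, are stated in a form that does not hold, and the paper's mechanism is materially different at exactly those points.

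First, the bilinear estimate $\|\Gamma(f,f)\|_{\infty,\alpha}\lesssim\|f\|_{\infty,\alpha}^2$ is false in this weighted class. Already at the level of the undifferentiated quantity, one has $|\Gamma(h_1,h_2)(x,v)|\lesssim|h_1|_{\infty,\alpha}|h_2|_{\infty,\alpha}e^{-\alpha|v|^2}(1+|v|)^\gamma$ (Lemma~\ref{lem:Gamma}), so $e^{\alpha|v|^2}|\Gamma|$ grows like $(1+|v|)^\gamma$ and $|\Gamma(h_1,h_2)|_{\infty,\alpha}=\infty$ for hard potentials; the analogous growth, plus an uncompensated $d_x^{-1/2}$, appears in $\nabla_x\Gamma$ and $\nabla_v\Gamma$ (Lemmas~\ref{lem:Gamma_dx}, \ref{lem:Gamma_dv}). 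What is actually bounded is $\|S_\Omega\Gamma(h_1,h_2)\|_{\infty,\alpha}$ (Lemma~\ref{lem:bilinear_est}): the operator $S_\Omega$ kills the $(1+|v|)^\gamma$ factor via $\int_0^{\tau_{x,v}}e^{-\nu(v)s}\,ds\lesssim\min\{1,\diam(\Omega)/|v|\}$ combined with Lemma~\ref{lem:estimate_nonlin_3}, and kills the $d_x^{-1/2}$ singularity via the trajectory integral of Corollary~\ref{cor:geometric estimate B}. So the relevant estimate is a property of $S_\Omega\Gamma$, not of $\Gamma$, and you cannot factor it through a bound on $\Gamma$ alone. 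Relatedly, the heuristic ``since $\Gamma$ is local in $x$, the weighted gradient passes through multiplicatively'' is too quick: when $\nabla_x$ lands on $h_1$ at velocity $v'$, the weight you inherit is $w(x,v')^{-1}$, involving $N(x,v')$ at the post-collision velocity, which is unrelated to $N(x,v)$. The paper resolves this with Proposition~\ref{prop:geometric estimate A}, which bounds $1/N(x,v_*)$ by $R^{1/2}/d_x^{1/2}$ \emph{uniformly in the velocity argument}, together with the spherical integral identity \eqref{Estimate 1/v' S_2} for $\int_{\mathbb{S}^2}|v'(\sigma)|^{-1}\,d\Sigma$.

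Second, the claimed form of the ``volume contribution'' to $\nabla_x S_\Omega Kh$ is off. It is not $\tau_{x,v}/N(x,v)$, and the factor you propose, $\tau_{x,v}\,N(q(x,v),v)/N(x,v)$, collapses to $\tau_{x,v}$ (since $q(q(x,v),v)=q(x,v)$), which scales like $\diam(\Omega)/|v|$ and cannot produce $(Rr)^{1/2}(1+R/r)$. The actual mechanism (Lemma~\ref{SK decay x gradient L infty estimate}): when $\nabla_x$ lands on $h$ inside $K$, the weighted bound on $\nabla_x h$ produces $w(x-sv,v_*)^{-1}\sim\frac{1+|v_*|}{|v_*|N(x-sv,v_*)}$ with $v_*$ the integration variable of $K$; Proposition~\ref{prop:geometric estimate A} converts $1/N(x-sv,v_*)$ into the velocity-independent $R^{1/2}/d_{x-sv}^{1/2}$; Lemma~\ref{lem:K decay L estimate} handles the $v_*$ integral; and Corollary~\ref{cor:geometric estimate B} integrates $d_{x-sv}^{-1/2}$ along the trajectory to yield $r^{1/2}(1+R/r)/|v|$. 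Multiplying by the left-hand weight $w(x,v)$ then gives the $(Rr)^{1/2}(1+R/r)$ coefficient. The two-sphere-conditions interplay is precisely there (Propositions~\ref{prop:geometric estimate A} and \ref{prop:geometric estimate B}), and it is not a pointwise cancellation of $1/N$ against $w$, but a change from the angular factor $N$ at an arbitrary velocity to the purely geometric boundary distance $d_x$, followed by a one-dimensional integral.

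Once these two lemmas are in place, the remainder of your argument (uniform smallness of the iterates, contraction of differences using $\Gamma(f_i,f_i)-\Gamma(f_{i-1},f_{i-1})=\Gamma(f_i,f_i-f_{i-1})+\Gamma(f_i-f_{i-1},f_{i-1})$, passage to the limit) is exactly what the paper does.
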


\begin{remark}
There exist a domain and a boundary data which satisfy the assumption of Theorem \ref{main theorem nonlinear}. For example, let $\Omega:= B_{r_0}(0)$. Then, we have $R = r = r_0$ and $\diam(\Omega) = 2 r_0$. Also take $g = r_0 g_0$, where $g_0$ is a smooth function on $\Gamma^-$ with $|g_0(x,v)| \lesssim e^{-\alpha |v|^2}$, $|\nabla_X g_0(x,v)| \lesssim (1+|v|)e^{-\alpha|v|^2}/|v|$ and $ |\nabla_v g_0(x,v)|\lesssim (1+|v|)e^{-\alpha|v|^2}/|v|$. Here $\nabla_X$ denotes the covariant derivative of $g$ along the surface $\partial \Omega$.
\end{remark}

\begin{remark}
It can be shown that $\hat{L}^{\infty}_{\alpha}\subseteq W^{1,p}$, by which it turns out that Theorem \ref{main theorem nonlinear} provides a sufficient condition for the existence of a $W^{1,p}$ solution to the equation \eqref{Boltzmann equation} with the boundary condition \eqref{boundary condition} for $1 \leq p < 3$. A proof is given in Proposition \ref{appendix 1}.
\end{remark}

The strategy of our proof starts with the derivation of the existence of a solution to the linearized Boltzmann equation with a source term:
\begin{equation} \label{LBE}
\begin{cases}
v \cdot \nabla_{x}f + \nu(v)f = Kf+\phi, &(x,v) \in \Omega \times \mathbb{R}^3, \\ 
f(x,v)=g(x,v),  &(x,v) \in \Gamma^-.
\end{cases}    
\end{equation}
For the sake of the convenience in further discussion, we define operators $J$ and $S_\Omega$ by
\[
Jg(x, v) := e^{-\nu(v)\tau_{x,v}}g(q(x,v),v)
\]
and\[
S_{\Omega}h(x, v) := \int_0^{\tau_{x,v}}e^{-\nu(v)s} h(x - sv, v)\,ds,
\]
respectively. We notice that $Jg$ is a function defined on $\Omega \times \mathbb{R}^3$ while $g$ is defined on $\Gamma^-$. Then, the equation \eqref{LBE} is equivalent to the following integral form
\[
f(x,v) =e^{-\nu(v)\tau_{x,v}}g(q(x,v),v)+\int_0^{\tau_{x,v}}e^{-\nu(v)s}(Kf+\phi)(x-sv)\,ds,
\]
or
\begin{equation}\label{integral form}
f=Jg+S_{\Omega}Kf+S_{\Omega}\phi.
\end{equation}
By doing the Picard iteration on \eqref{integral form}, we obtain
\begin{equation}\label{Picard series finite chapter 1}
f=\sum_{i=0}^{n}(S_{\Omega}K)^{i}(Jg+S_{\Omega}\phi)+(S_{\Omega}K)^{n+1}f.
\end{equation}
We show the convergence of the summation on the right hand side of \eqref{Picard series finite chapter 1} as $n \to \infty$, and this limit gives a solution formula. Namely, the Picard iteration suggest the following solution formula to \eqref{integral form}.
\begin{equation}\label{Picard series}
f=\sum_{i=0}^{\infty}(S_{\Omega}K)^{i}(Jg+S_{\Omega}\phi).
\end{equation}

To achieve the above procedure, we need the following lemma.
\begin{lemma} \label{lem:iteration}
Assume {\bf Property A} and let $0 \leq \alpha < (1-\rho)/2$. Also, suppose {\bf Assumption $\Omega$} with uniform circumscribed and interior radii $R$ and $r$ respectively. Then, given $h \in \hat L^{\infty}_{\alpha}$, we have
 \begin{align*}
||(S_{\Omega}K)h||_{\infty,\alpha}\lesssim (1 + \diam(\Omega))|h|_{\infty,\alpha}+(Rr)^{\frac{1}{2}}\left( 1+\frac{R}{r} \right)||h||_{\infty,\alpha}.
\end{align*}   
\end{lemma}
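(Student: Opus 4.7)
The plan is to bound each of the three summands of $\|(S_\Omega K)h\|_{\infty,\alpha}$: the unweighted sup-norm $|S_\Omega K h|_{\infty,\alpha}$, and the weighted gradient norms $|\nabla_x S_\Omega K h|_{\infty,\alpha,w}$ and $|\nabla_v S_\Omega K h|_{\infty,\alpha,w}$.

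For the first, we combine the standard estimate $|K h|_{\infty,\alpha} \lesssim |h|_{\infty,\alpha}$, which follows from \eqref{K decay estimate} together with the hypothesis $\alpha < (1-\rho)/2$, with the uniform positivity $\nu(v) \geq \nu_0 > 0$ to get $\int_0^{\tau_{x,v}} e^{-\nu(v)s}\,ds \lesssim 1$, yielding $|S_\Omega K h|_{\infty,\alpha} \lesssim |h|_{\infty,\alpha}$.

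For the gradient parts we differentiate under the integral via Leibniz. From $q(x,v) = x - \tau_{x,v} v \in \partial\Omega$ one derives
\[
\nabla_x \tau_{x,v} = -\frac{n(q(x,v))}{|v|\,N(x,v)}, \qquad \nabla_v \tau_{x,v} = -\tau_{x,v}\,\nabla_x \tau_{x,v},
\]
so that the boundary contributions from differentiating the upper limit carry a $1/N$ singularity, which is exactly absorbed by the weight $w(x,v) = |v|N(x,v)/(|v|+1)$ in $|\cdot|_{\infty,\alpha,w}$; what remains is controlled by $|Kh(q(x,v),v)| \lesssim |h|_{\infty,\alpha}$. The interior part of $\nabla_v$ splits into three summands arising from differentiating the exponential ($-s\,\nabla_v\nu(v)$), the kernel ($\nabla_v k(v,v_*)$), and $h(x-sv,v_*)$ (producing $-s\,\nabla_y h$); for $\nabla_x$ only the last survives, and without the factor $s$. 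The $\nabla_v\nu$ term is handled via \eqref{gradient nu decay estimate}, $s \leq \diam(\Omega)/|v|$, and $w(x,v)/|v|\lesssim 1$, yielding a contribution $\lesssim \diam(\Omega)\,|h|_{\infty,\alpha}$; the $\nabla_v k$ term is absorbed into $|h|_{\infty,\alpha}$ via \eqref{gradient v K decay estimate}.

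The main obstacle is the contribution from the translation-gradient term. Substituting $|\nabla_y h(y,v_*)| \leq \|h\|_{\infty,\alpha}\,e^{-\alpha|v_*|^2}/w(y,v_*)$ reduces the estimate to controlling integrals of the form
\[
w(x,v)\int_0^{\tau_{x,v}} e^{-\nu(v)s}\int_{\R^3} |k(v,v_*)|\,e^{\alpha(|v|^2-|v_*|^2)}\,\frac{s\,dv_*\,ds}{w(x-sv,v_*)}
\]
and its variant without the factor $s$. The integrand becomes singular precisely when the backward $v_*$-trajectory from $x-sv$ grazes $\partial\Omega$, i.e.\ when $N(x-sv,v_*)$ is small. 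Here the uniform sphere conditions enter: the interior radius $r$ provides a lower bound on how fast a generic trajectory escapes grazing incidence, while the circumscribed radius $R$ controls the Taylor expansion of $\partial\Omega$ at an entry point. Combined with the Gaussian-type decay of $k$ from \eqref{K decay estimate}, these geometric estimates --- in the spirit of those developed in \cite{Ikun 1'} and \cite{Ikun 2} --- bound the above integral by $(Rr)^{1/2}(1+R/r)$, contributing $(Rr)^{1/2}(1+R/r)\,\|h\|_{\infty,\alpha}$. Summing the three regimes yields the claimed estimate.
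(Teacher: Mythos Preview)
Your overall decomposition matches the paper's: bound the three summands of $\|\cdot\|_{\infty,\alpha}$ separately, absorb the boundary terms coming from $\nabla\tau_{x,v}$ with the weight $w$, and reduce the interior translation-gradient contribution to the integral you display. The gap is precisely at what you call ``the main obstacle.'' You assert that geometric estimates ``in the spirit of'' \cite{Ikun 1'} and \cite{Ikun 2} bound that integral by $(Rr)^{1/2}(1+R/r)$, but your description --- $r$ bounding escape-from-grazing rates, $R$ controlling a Taylor expansion of $\partial\Omega$ --- suggests you intend to analyze the singular set $\{v_*: N(x-sv,v_*)\ \text{small}\}$ directly. That set depends on $s$, and its interaction with the Gaussian decay of $k$ is not addressed; as written this is a claim, not an argument.

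The paper's mechanism does not go through the grazing set at all. The key pointwise inequality (Proposition~\ref{prop:geometric estimate A}) is $N(y,v_*)^{-1} \le R^{1/2}\,d_y^{-1/2}$, valid \emph{uniformly in $v_*$}, where $d_y = \dist(y,\partial\Omega)$. Applying this with $y = x-sv$ replaces $w(x-sv,v_*)^{-1}$ by $R^{1/2}\,(1+|v_*|)\,|v_*|^{-1}\,d_{x-sv}^{-1/2}$, which decouples the $v_*$-integration from the geometry: Lemma~\ref{lem:K decay L estimate} (this is why that lemma carries the factor $(1+|v_*|)/|v_*|$) then disposes of the $v_*$-integral, leaving $R^{1/2}\int_0^{\tau_{x,v}} d_{x-sv}^{-1/2}\,ds$. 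The latter is exactly Corollary~\ref{cor:geometric estimate B}, which supplies the factor $r^{1/2}(1+R/r)/|v|$ and hence the $(Rr)^{1/2}(1+R/r)$ in the final bound. Without this $N^{-1}\to d^{-1/2}$ conversion, your sketch does not close.
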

Then, we reach at the following lemma.
\begin{lemma} \label{lem:existence linear}
Let $\phi$ be a function such that $S_\Omega \phi \in \hat L^{\infty}_{\alpha}$. Suppose \eqref{assumption_B1} holds. Then, given $0<\alpha<(1-\rho)/2$, where $\rho$ is the constant in \textbf{Property A}, there exists a positive constant $\delta$ such that: For any domain $\Omega$ satisfying \textbf{Assumption $\Omega$} with uniform circumscribed and interior sphere radii $R$ and $r$ respectively, if 
\begin{equation} \label{est_smallness_linear}
\max \left\{ \diam(\Omega),(Rr)^{\frac{1}{2}} \left( 1+\frac{R}{r} \right) \right\} <\delta,
\end{equation}
there exists a solution $f \in \hat{L}^{\infty}_{\alpha}$ to the integral equation \eqref{integral form}. Moreover, we have
\[
\Vert f \Vert_{\infty,\alpha} \lesssim \Vert S_{\Omega}\phi\Vert_{\infty,\alpha}+\Vert Jg \Vert_{\infty,\alpha}.
\]
\end{lemma}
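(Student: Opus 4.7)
The plan is to realize $f$ as the Neumann series \eqref{Picard series}
\[
f = \sum_{i=0}^{\infty} (S_{\Omega} K)^{i}(Jg + S_\Omega \phi)
\]
and to prove its convergence in $\hat{L}^{\infty}_{\alpha}$ using Lemma \ref{lem:iteration}, provided $\delta$ in \eqref{est_smallness_linear} is small enough that $S_\Omega K$ becomes a contraction on $\hat{L}^{\infty}_{\alpha}$. Once convergence is established, the limit automatically satisfies \eqref{integral form} and the norm estimate follows by summing a geometric series.

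I would first verify that $h_0 := Jg + S_{\Omega}\phi$ lies in $\hat{L}^{\infty}_{\alpha}$. By hypothesis $S_{\Omega}\phi \in \hat{L}^{\infty}_{\alpha}$. For $Jg$, one computes $\nabla_x Jg$ and $\nabla_v Jg$ directly from the definition: the exponential factor $e^{-\nu(v)\tau_{x,v}}$ provides damping, while the derivatives of $\tau_{x,v}$ and $q(x,v)$ produce factors of order $1/N(x,v)$, which are exactly absorbed by the weight $w(x,v)$ entering $\|\cdot\|_{\infty,\alpha}$. Hence $\|Jg\|_{\infty,\alpha}$ is finite and controlled by boundary quantities of $g$.

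Setting $h_i := (S_{\Omega}K)^i h_0$, iterating Lemma \ref{lem:iteration} gives
\[
\|h_{i+1}\|_{\infty,\alpha} \leq C(1+\diam(\Omega))|h_i|_{\infty,\alpha} + C(Rr)^{1/2}\left(1+\frac{R}{r}\right)\|h_i\|_{\infty,\alpha}.
\]
The weighted-gradient coefficient is $\leq C\delta$ by \eqref{est_smallness_linear}, but the first coefficient $C(1+\diam(\Omega))$ does not shrink with $\delta$, so Lemma \ref{lem:iteration} alone does not yield a contraction. To close the argument I would supplement it with a companion estimate $|(S_{\Omega}K)h|_{\infty,\alpha} \leq C'\eta(\delta)|h|_{\infty,\alpha}$ with $\eta(\delta) \to 0$ as $\delta \to 0$. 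Such a bound follows by estimating
\[
|(S_{\Omega}K)h(x,v)| \leq \int_0^{\tau_{x,v}} e^{-\nu(v)s} \int k(v,v_*)|h(x-sv,v_*)|\,dv_*\,ds,
\]
using the exponential decay of $k$ from \eqref{K decay estimate} to control the large-$|v_*|$ regime and the bound $\tau_{x,v} \leq \diam(\Omega)/|v|$ on a suitable splitting of the $v$-space to extract the $\diam(\Omega)$-small factor uniformly. Combining the two bounds gives $\|h_{i+1}\|_{\infty,\alpha} \leq \theta \|h_i\|_{\infty,\alpha}$ for some $\theta = \theta(\delta) < 1$.

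The geometric decay $\|h_i\|_{\infty,\alpha} \leq \theta^i \|h_0\|_{\infty,\alpha}$ shows that \eqref{Picard series} converges in $\hat{L}^{\infty}_{\alpha}$ to some $f$; the remainder $(S_{\Omega}K)^{n+1}f$ in \eqref{Picard series finite chapter 1} vanishes in the limit by the same geometric bound, so $f$ solves \eqref{integral form}, and summing the geometric series yields $\|f\|_{\infty,\alpha} \lesssim \|h_0\|_{\infty,\alpha} \lesssim \|Jg\|_{\infty,\alpha} + \|S_{\Omega}\phi\|_{\infty,\alpha}$, as required. The main obstacle is the companion contraction estimate in $|\cdot|_{\infty,\alpha}$: Lemma \ref{lem:iteration} only controls the weighted-gradient part of $\|\cdot\|_{\infty,\alpha}$ by a small factor, and one must exploit the exponential kernel decay from \eqref{K decay estimate} together with the $\diam(\Omega)<\delta$ hypothesis to recover the missing smallness in the non-weighted part.
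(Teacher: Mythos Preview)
Your overall plan matches the paper's: build the solution via the Neumann series \eqref{Picard series} and control it using Lemma~\ref{lem:iteration} together with an auxiliary smallness estimate in the unweighted seminorm $|\cdot|_{\infty,\alpha}$. The gap is your companion estimate: the single-step bound $|(S_\Omega K)h|_{\infty,\alpha} \leq C'\eta(\delta)|h|_{\infty,\alpha}$ with $\eta(\delta)\to 0$ fails. The pointwise control one actually obtains (Lemma~\ref{SK decay L infty estimate}) is
\[
|S_\Omega K h(x,v)| \lesssim |h|_{\infty,\alpha}\, e^{-\alpha|v|^2}\min\Bigl\{1,\frac{\diam(\Omega)}{|v|}\Bigr\},
\]
and for small $|v|$ the minimum equals $1$: there the $s$-integral $\int_0^{\tau_{x,v}} e^{-\nu(v)s}\,ds$ saturates at $1/\nu(v)$, which is merely bounded. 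No splitting of $v$-space extracts a uniformly $\diam(\Omega)$-small factor from a single application of $S_\Omega K$.

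The paper's remedy is to iterate once more. After one application of $S_\Omega K$ the output carries the factor $\diam(\Omega)/|v|$; applying $K$ again integrates the $1/|v_*|$ against the kernel, which is absorbed by Lemma~\ref{lem:K decay L estimate}. This gives the \emph{two-step} contraction of Corollary~\ref{cor:SK intergral estimate},
\[
|(S_\Omega K)^2 h|_{\infty,\alpha} \lesssim \diam(\Omega)\,|h|_{\infty,\alpha}.
\]
The paper never asserts a one-step contraction $\|h_{i+1}\|_{\infty,\alpha}\leq\theta\|h_i\|_{\infty,\alpha}$; instead it rearranges Lemma~\ref{lem:iteration} (for $\delta$ small) as $\|h_{i}\|_{\infty,\alpha} - \tfrac12\|h_{i-1}\|_{\infty,\alpha} \lesssim |h_{i-1}|_{\infty,\alpha}$, sums over $i$ to obtain $\sum_i\|h_i\|_{\infty,\alpha}\lesssim \sum_i|h_i|_{\infty,\alpha}+\|h_0\|_{\infty,\alpha}$, and then uses the two-step geometric decay of $|h_i|_{\infty,\alpha}$ to bound the right-hand side. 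Your argument is repaired in exactly this way.
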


Next, to obtain the solution to the equation \eqref{Boltzmann equation} with boundary condition \eqref{boundary condition}, we consider the following iteration scheme:
\begin{equation} \label{nonlinear ieration scheme}
\begin{cases}
v \cdot \nabla_{x} f_{i+1} + \nu(v) f_{i+1} = K(f_{i+1}) + \Gamma(f_{i},f_{i}), &(x, v) \in \Omega \times \R^3,\\
f_i(x,v)=g(x,v), &(x,v) \in \Gamma^-.
 \end{cases}
\end{equation}
To show the convergence of the sequence $\{f_i\}$ obtained by \eqref{nonlinear ieration scheme}, we employ the following lemma:
\begin{lemma} \label{lem:bilinear_est}
Suppose \eqref{assumption_B1} holds and let $0 \leq \alpha < (1-\rho)/2$. where $\rho$ is the constant in \textbf {Property A}. Also, suppose {\bf Assumption $\Omega$} with uniform circumscribed and interior radii $R$ and $r$ respectively. Then, for $h_1,h_2 \in \hat{L}^{\infty}_{\alpha}$, we have
 \begin{align*}
||S_{\Omega}\Gamma(h_1,h_2)||_{\infty,\alpha} \lesssim \left(1 + \diam(\Omega)+(Rr)^{\frac{1}{2}} \left( 1+\frac{R}{r} \right) \right)||h_1||_{\infty,\alpha} ||h_2||_{\infty,\alpha} . 
\end{align*}   
\end{lemma}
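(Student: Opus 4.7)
The plan is to follow the same architecture as the proof of Lemma \ref{lem:iteration} for $S_\Omega K$, but with the linear kernel estimates of \textbf{Property A} replaced by bilinear analogs for $\Gamma$. The operator $\Gamma$ is obtained from $Q$ by the same conjugation with $M^{1/2}$ that produces $L$ from $Q$, so one expects --- and by direct computation obtains --- a pointwise bound
\[
|\Gamma(h_1, h_2)(x, v)| \lesssim \nu(v) e^{-\alpha|v|^2} |h_1|_{\infty,\alpha} |h_2|_{\infty,\alpha},
\]
valid for $0 \leq \alpha < (1-\rho)/2$. This uses energy conservation to pull the Gaussian weight out of the post-collisional variables, together with the cross-section bound \eqref{assumption_B1} and Gaussian bounds on $M^{1/2}$. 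A parallel bound for $|\nabla_v \Gamma(h_1, h_2)|$, with a factor $(1+|v|)$ in place of $\nu(v)$, is obtained by the same technique used to prove \eqref{gradient v K decay estimate}.

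With these bilinear estimates in hand, the $|S_\Omega \Gamma(h_1, h_2)|_{\infty,\alpha}$ part follows immediately from the uniform bound $\int_0^{\tau_{x,v}} e^{-\nu(v)s}\nu(v)\,ds \leq 1$. For the spatial derivative, differentiating inside the integral defining $S_\Omega$ yields $\Gamma(\nabla_x h_1, h_2) + \Gamma(h_1, \nabla_x h_2)$ from the integrand, plus boundary and trajectory terms produced by $\nabla_x \tau_{x,v}$ and $\nabla_x q(x,v)$. The latter terms carry the singular factor $1/N(q(x,v), v)$, which the weight $w$ in the definition of $\| \cdot \|_{\infty,\alpha}$ is designed to absorb, at the cost of a geometric factor. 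Under \textbf{Assumption $\Omega$} with circumscribed and interior radii $R$, $r$, this factor is controlled by $1 + \diam(\Omega) + (Rr)^{1/2}(1+R/r)$, exactly as in Lemma \ref{lem:iteration}. The velocity derivative $\nabla_v S_\Omega \Gamma(h_1, h_2)$ is treated the same way, using the bilinear $\nabla_v \Gamma$ estimate whenever the derivative falls on the integrand.

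The main technical obstacle is the bilinear kernel estimate, particularly for $\nabla_v \Gamma$. Unlike the linear kernel $k(v, v_*)$, which is explicit in $(v, v_*)$, the bilinear operator involves an additional velocity integration and an additional function factor, and its $v$-derivative must be controlled uniformly while preserving the Gaussian decay. This is achieved by a change of variables of Carleman type (analogous to the derivation of $k$ itself) that reduces the $v$-derivative to one acting on fixed Gaussian-weighted functions; the analysis parallels that giving the bound on $|\nabla_v k|$ in \eqref{gradient v K decay estimate}. Once this bilinear analog of \textbf{Property A} is established, the remainder is a routine rerun of the geometric integration estimates used for Lemma \ref{lem:iteration}, and the product structure $\|h_1\|_{\infty,\alpha}\|h_2\|_{\infty,\alpha}$ appears naturally from pairing the two arguments of $\Gamma$.
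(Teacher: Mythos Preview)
Your overall architecture is right and matches the paper: split $\|S_\Omega\Gamma(h_1,h_2)\|_{\infty,\alpha}$ into the $|\cdot|_{\infty,\alpha}$, $|\nabla_x\cdot|_{\infty,\alpha,w}$ and $|\nabla_v\cdot|_{\infty,\alpha,w}$ pieces, and for each piece mimic the corresponding estimate for $S_\Omega K$. The pointwise bound $|\Gamma(h_1,h_2)|\lesssim (1+|v|)^\gamma e^{-\alpha|v|^2}|h_1|_{\infty,\alpha}|h_2|_{\infty,\alpha}$ is exactly the paper's Lemma~\ref{lem:Gamma}, and your handling of the boundary term from $\nabla_x\tau_{x,v}$ is correct.

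There is, however, a real gap in how you treat the interior terms $\Gamma(\nabla_x h_1,h_2)$, $\Gamma(h_1,\nabla_x h_2)$ (and likewise for $\nabla_v$). When a derivative lands on $h_j$ inside the \emph{gain} part of $\Gamma$, the bound reads $|\nabla h_j(x,v')|\le \|h_j\|_{\infty,\alpha}\,w(x,v')^{-1}e^{-\alpha|v'|^2}$, with the weight evaluated at the \emph{post-collisional} velocity $v'$ (or $v_*'$), which is a variable of integration over the collision sphere. Two steps are needed here that do not appear in your outline and are not ``exactly as in Lemma~\ref{lem:iteration}'': first, one removes the angular dependence by Proposition~\ref{prop:geometric estimate A}, $N(x,v')^{-1}\le R^{1/2}d_x^{-1/2}$, turning the weight into $(R^{1/2}/d_x^{1/2})(1+|v'|^{-1})$; second, one must integrate $|v'|^{-1}$ over the sphere. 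The paper does this via the $\sigma$-formulation and the explicit computation $\int_{\mathbb S^2}|v'(\sigma)|^{-1}\,d\Sigma(\sigma)=8\pi\min\{|v+v_*|^{-1},|v-v_*|^{-1}\}$ (identity~\eqref{Estimate 1/v' S_2}). Only after this does the trajectory integral $\int_0^{\tau_{x,v}} d_{x-sv}^{-1/2}\,ds$ appear and produce the factor $(Rr)^{1/2}(1+R/r)$ via Corollary~\ref{cor:geometric estimate B}. Your proposal attributes the geometric factor to the boundary term alone; in fact the boundary term is absorbed directly by $w(x,v)$ with no geometric constant, and the $(Rr)^{1/2}(1+R/r)$ comes from the interior contribution you leave unaddressed.

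Relatedly, your claimed bound $|\nabla_v\Gamma(h_1,h_2)|\lesssim (1+|v|)e^{-\alpha|v|^2}|h_1|_{\infty,\alpha}|h_2|_{\infty,\alpha}$, depending only on the undifferentiated norms, is not what the paper proves and would require moving the $v$-derivative off $h_j(v')$ by some change of variables. The paper instead applies the chain rule directly (so $\nabla_v h_j$ appears), which forces the $\|h_j\|_{\infty,\alpha}$ norm and the same $w(x,v')^{-1}$ weight, handled again by Proposition~\ref{prop:geometric estimate A} and the spherical identity \eqref{Estimate 1/v' S_2}. A Carleman-type argument might be an alternative, but it is not the paper's route, and you have not shown that the resulting kernel singularities (e.g.\ the extra $|v-v'|^{-1}$ from differentiating the Carleman Jacobian) stay integrable.
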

With the help of the bilinear estimate of $\| S_{\Omega}\Gamma(h_1,h_2) \|_{\infty,\alpha}$ in Lemma \ref{lem:bilinear_est}, we derive the convergence of the sequence $\{ f_i \}$. Finally we successfully prove that the limit of $\{ f_i \}$ is a solution to the equation \eqref{Boltzmann equation} with the boundary condition \eqref{boundary condition}.

The rest part of this article is as follows. In Section \ref{sec:pre}, we introduce some key estimates which are based on the assumption \eqref{assumption_B1} and the geometry of $\Omega$. In Section \ref{sec:linear}, we focus on the existence result and derive some estimates for the linear case. A detailed proof of the existence of a solution for the nonlinear case is given in Section 4.

%%%%%%%%%%%%%%%%%%%%%%%%%%%%%%%%%%%%%%%%%%%%%%%%%%%%%%%%%%%%%%
\section{Preliminary estimates} \label{sec:pre}
%%%%%%%%%%%%%%%%%%%%%%%%%%%%%%%%%%%%%%%%%%%%%%%%%%%%%%%%%%%%%%

In this section, we introduce some key estimates which are based on the assumption \eqref{assumption_B1} and the geometry of $\Omega$.

%%%%%%%%%%%%%%%%%%%%%%%%%%%%%%%%%%%%%%%%%%%%%%%%%%%%%%%%%%%%%%
\subsection{Estimates for the linear integral kernel}
%%%%%%%%%%%%%%%%%%%%%%%%%%%%%%%%%%%%%%%%%%%%%%%%%%%%%%%%%%%%%%

Thanks to the assumption \eqref{K decay estimate}, we can derive a very useful integral estimate which will be frequently used later.

\begin{lemma}\label{lem:K decay L estimate}
For $0 \leq \alpha < (1-\rho)/2$, where $\rho$ is the constant in {\bf Property A}, we have
\begin{equation*}
\int_{\R^3} \frac{1 + |v_*|}{|v_*|} |k(v,v_*)| e^{-\alpha|v_*|^2}\,dv_* \lesssim e^{-\alpha |v|^2} . 
\end{equation*}
\end{lemma}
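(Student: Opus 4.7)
The plan is to reduce the estimate to a uniform bound on a $v$-independent integral by extracting the factor $e^{-\alpha|v|^2}$ from the exponential in $|k(v,v_*)| e^{-\alpha|v_*|^2}$, and then checking integrability around the two singularities at $v_*=0$ and $v_*=v$.

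The first step sets $A := |v-v_*|$ and $B := (|v|^2-|v_*|^2)/|v-v_*|$, so that $|v|^2 - |v_*|^2 = AB$. Combining the Gaussian in \eqref{K decay estimate} with the boundary weight gives
\[
-\tfrac{1-\rho}{4}(A^2+B^2) - \alpha |v_*|^2 = -\alpha |v|^2 - \tfrac{1-\rho}{4}(A^2+B^2) + \alpha AB .
\]
By Young's inequality $\alpha AB \le \tfrac{\alpha}{2}(A^2+B^2)$, and the assumption $\alpha < (1-\rho)/2$ guarantees
\[
c \ := \ \tfrac{1-\rho}{4} - \tfrac{\alpha}{2} \ > \ 0,
\]
so that $e^{-\tfrac{1-\rho}{4}(A^2+B^2)} e^{-\alpha|v_*|^2} \le e^{-\alpha|v|^2} e^{-c(A^2+B^2)}$. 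Hence it suffices to prove
\[
I(v) := \int_{\R^3} \frac{1+|v_*|}{|v_*|} \cdot \frac{1}{|v-v_*|(1+|v|+|v_*|)^{1-\gamma}} e^{-c|v-v_*|^2} \,dv_* \ \lesssim \ 1
\]
uniformly in $v$ (the $B^2$ factor in the exponent is not needed).

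For the uniform bound I split $\R^3$ into $\{|v_*|\le 1\}$ and $\{|v_*|\ge 1\}$. On $\{|v_*|\ge 1\}$ the prefactor $(1+|v_*|)/|v_*|$ is bounded by $2$, and since $1-\gamma\ge 0$ we have $(1+|v|+|v_*|)^{1-\gamma}\ge 1$; changing variables $u=v-v_*$ gives
\[
\int_{\R^3} \frac{1}{|u|}\, e^{-c|u|^2}\,du \ < \ \infty,
\]
by passing to spherical coordinates. On $\{|v_*|\le 1\}$ the factor $(1+|v_*|)/|v_*|\le 2/|v_*|$ is non-integrable only at the origin, but $1/|v_*|$ is locally integrable in $\R^3$. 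If $|v|\le 2$, the remaining factors are bounded and the double singularity $1/(|v_*||v-v_*|)$ on a ball is finite; if $|v|>2$, then $|v-v_*|\ge |v|/2$ on this region, yielding the estimate $\lesssim |v|^{-1} e^{-c|v|^2/4}$, which is uniformly bounded.

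The only delicate step is the AM–GM exchange in Step 1, where the strict inequality $\alpha<(1-\rho)/2$ is essential to keep a positive Gaussian remainder after pulling out $e^{-\alpha|v|^2}$; this is precisely the role of the hypothesis on $\alpha$. Everything else is elementary and dimension-dependent (the $1/|u|$ singularity being integrable in three dimensions), so no further obstacle arises.
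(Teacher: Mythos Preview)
Your argument is correct and follows the same overall plan as the paper: pull out $e^{-\alpha|v|^2}$ from the combined exponent, keep a positive Gaussian in $|v-v_*|$, and then verify that the remaining integral is uniformly bounded in $v$.

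The only notable difference is in how the first step is executed. The paper completes the square exactly (stated as a separate ``key $K$ estimate'' lemma), obtaining the sharper residual coefficient $\tfrac{(1-\rho+2\alpha)(1-\rho-2\alpha)}{4(1-\rho)}$ in place of your $c=\tfrac{1-\rho-2\alpha}{4}$ from Young's inequality; both are positive precisely when $\alpha<(1-\rho)/2$, so the loss is harmless here. For the residual integral the paper splits according to $|v_*|\gtrless|v-v_*|$ (symmetrizing the two singularities into a single expression $\int (1+|u|)|u|^{-2}e^{-c|u|^2}\,du$), whereas you split according to $|v_*|\gtrless 1$ and further subdivide by $|v|\gtrless 2$. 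Your route is slightly more ad hoc but perfectly valid; the paper's symmetrization is a touch cleaner, while your Young's inequality shortcut avoids the algebra of the auxiliary lemma.
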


Before starting our proof, we state a key estimate of $k$.
\begin{lemma}\label{key K estimate}
For any $\alpha \in \R$, we have
\begin{equation*}
\begin{split} 
|k(v,v_*)|\lesssim   &\frac{1}{|v-v_*|(1+|v|+|v_*|)^{1-\gamma}}e^{-\alpha|v|^2}  \\&\times e^{-\frac{(1-\rho+2\alpha)(1-\rho-2\alpha)}{4(1-\rho)}|v-v_*|^2}e^{-(1-\rho)\left( v\cdot\frac{(v-v_*)}{|v-v_*|}-\frac{1-\rho+2\alpha}{2(1-\rho)}|v-v_*| \right)^2}e^{\alpha|v_*|^2}.  
\end{split}
\end{equation*}   
\end{lemma}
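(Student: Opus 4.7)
The statement is purely an algebraic rearrangement of the exponent in \eqref{K decay estimate}, so the proof reduces to a computation; the estimate on the prefactor $1/(|v-v_*|(1+|v|+|v_*|)^{1-\gamma})$ is already in place. The plan has three steps.

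First, I would introduce the shorthand $s := |v - v_*|$, $\hat u := (v - v_*)/s$, and $\eta := v \cdot \hat u$, and use the identity
\[
\frac{|v|^2 - |v_*|^2}{|v - v_*|} = \frac{(v - v_*)\cdot(v + v_*)}{|v - v_*|} = \hat u \cdot (v + v_*) = 2\eta - s,
\]
obtained by writing $v + v_* = 2v - (v - v_*)$. As a byproduct, the same identity gives
\[
\alpha|v_*|^2 - \alpha|v|^2 = -\alpha\, s\,(2\eta - s) = \alpha s^2 - 2\alpha\, \eta\, s.
\]

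Second, I would substitute into the exponent appearing in \eqref{K decay estimate} and expand the square:
\[
-\tfrac{1-\rho}{4}\bigl(s^2 + (2\eta - s)^2\bigr) = -\tfrac{1-\rho}{2}\, s^2 - (1-\rho)\eta^2 + (1-\rho)\eta\, s.
\]
Third, I would expand the claimed target exponent as a quadratic polynomial in $(\eta, s)$. Writing $\beta := (1 - \rho + 2\alpha)/(2(1-\rho))$, the completed square expands as
\[
-(1-\rho)(\eta - \beta s)^2 = -(1-\rho)\eta^2 + (1-\rho + 2\alpha)\eta\, s - \tfrac{(1-\rho+2\alpha)^2}{4(1-\rho)}\, s^2.
\]
Combining this with the explicit $s^2$ coefficient $-(1-\rho+2\alpha)(1-\rho-2\alpha)/(4(1-\rho))$ and with the contribution $\alpha s^2 - 2\alpha\, \eta\, s$ from the factor $e^{-\alpha|v|^2}e^{\alpha|v_*|^2}$, the coefficients of $s^2$, $\eta^2$, and $\eta s$ collapse precisely to those computed in the second step (the $s^2$ coefficient simplifies through $-(1-\rho+2\alpha)/2 + \alpha = -(1-\rho)/2$, and similarly for the mixed term). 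Hence the two exponents are equal and the lemma follows directly from \eqref{K decay estimate}.

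There is no analytic obstacle: the lemma is a careful algebraic rewriting, arranged so that when $k(v,v_*) e^{-\alpha|v_*|^2}$ is integrated against a test function, the prefactor $e^{-\alpha|v|^2}$ factors out and the residual Gaussian in $(\eta, s)$ remains integrable precisely when $\alpha < (1-\rho)/2$ (which is how Lemma \ref{lem:K decay L estimate} will use it). The only care needed in the write-up is to keep track of signs and to expand the quadratic in the two variables $\eta$ and $s$ rather than in $v$ and $v_*$ directly.
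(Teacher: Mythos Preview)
Your proposal is correct and follows essentially the same approach as the paper: both proofs rewrite $(|v|^2-|v_*|^2)/|v-v_*|$ as $2\eta - s$ in your notation, expand the Gaussian exponent in \eqref{K decay estimate} as a quadratic in $(\eta,s)$, insert the trivial identity $\alpha(|v|^2+|v-v_*|^2-2(v-v_*)\cdot v-|v_*|^2)=0$, and complete the square. The only cosmetic difference is that the paper carries out the transformation forward step by step, whereas you expand both sides and match coefficients; the algebra and the underlying idea are identical.
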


\begin{proof}
Notice that $|v_*|^2=|v|^2+|v_*-v|^2+2v\cdot(v_*-v)$, which implies
\begin{equation*}
 \frac{|v|^2-|v_*|^2}{|v-v_*|}=-|v-v_*|+2v\cdot\frac{(v-v_*)}{|v-v_*|}.   
\end{equation*}
Hence by substituting above equation for \eqref{K decay estimate}, we have
\begin{align*}
e^{-\frac{1-\rho}{4}\left(|v-v_*|^2+ \left( -|v-v_*|+2v\cdot\frac{(v-v_*)}{|v-v_*|} \right)^2 \right)} =& e^{-\frac{1-\rho}{4}\left(2|v-v_*|^2-4v\cdot (v-v_*)+4\left( v\cdot\frac{(v-v_*)}{|v-v_*|} \right)^2 \right)}\\
=& e^{-\frac{1-\rho}{2}|v-v_*|^2}e^{(1-\rho)v\cdot (v-v_*)}e^{-(1-\rho)\left( v\cdot\frac{(v-v_*)}{|v-v_*|} \right)^2}.
\end{align*}
Then, we use the identity 
\[
|v|^2+|v-v_*|^2-2(v-v_*)\cdot v-|v_*|^2 = 0
\]
to obtain
\begin{equation*}
\begin{split}
&e^{-\frac{1-\rho}{2}|v-v_*|^2}e^{(1-\rho)v\cdot (v-v_*)}e^{-(1-\rho)\left( v\cdot\frac{(v-v_*)}{|v-v_*|} \right)^2}\\
=& e^{-\alpha(|v|^2+|v-v_*|^2-2(v-v_*)\cdot v-|v_*|^2)}e^{-\frac{1-\rho}{2}|v-v_*|^2}e^{(1-\rho)v\cdot (v-v_*)}e^{-(1-\rho)\left( v\cdot\frac{(v-v_*)}{|v-v_*|} \right)^2}\\
=& e^{-\alpha|v|^2}e^{-\frac{1-\rho+2\alpha}{2}|v-v_*|^2}e^{(1-\rho+2\alpha)v\cdot (v-v_*)}e^{-(1-\rho)\left( v\cdot\frac{(v-v_*)}{|v-v_*|} \right)^2}e^{\alpha|v_*|^2}\\
=& e^{-\alpha|v|^2}e^{-\frac{(1-\rho+2\alpha)(1-\rho-2\alpha)}{4(1-\rho)}|v-v_*|^2}e^{-(1-\rho)\left( v\cdot\frac{(v-v_*)}{|v-v_*|}-\frac{1-\rho+2\alpha}{2(1-\rho)}|v-v_*| \right)^2}e^{\alpha|v_*|^2}.
 \end{split}
\end{equation*}
This completes the proof.    
\end{proof}

\begin{proof}[Proof of Lemma \ref{lem:K decay L estimate}]
By applying Lemma \ref{key K estimate}, we have 
\begin{align*}
&\int_{\R^3} \frac{1 + |v_*|}{|v_*|} |k(v,v_*)| e^{-\alpha|v_*|^2}dv_*\\
\lesssim& \int_{\R^3} \frac{1 + |v_*|}{|v_*|} \frac{1}{|v-v_*|(1+|v|+|v_*|)^{1-\gamma}}e^{-\alpha|v|^2}e^{-\frac{(1-\rho+2\alpha)(1-\rho-2\alpha)}{4(1-\rho)}|v-v_*|^2}\\
&\times e^{-(1-\rho)\left( v\cdot\frac{(v-v_*)}{|v-v_*|}-\frac{1-\rho+2\alpha}{2(1-\rho)}|v-v_*| \right)^2}e^{\alpha|v_*|^2}e^{-\alpha|v_*|^2}dv_* \\
\leq& e^{-\alpha|v|^2}\int_{\R^3} \frac{1 + |v_*|}{|v_*|} \frac{1}{|v-v_*|(1+|v|+|v_*|)^{1-\gamma}}e^{-\frac{(1-\rho+2\alpha)(1-\rho-2\alpha)}{4(1-\rho)}|v-v_*|^2} dv_*. 
\end{align*}
Since $1-\rho-2\alpha > 0$, we get
\begin{align*}
&\int_{\R^3} \frac{1 + |v_*|}{|v_*|} \frac{1}{|v-v_*|(1+|v|+|v_*|)^{1-\gamma}}e^{-\frac{(1-\rho+2\alpha)(1-\rho-2\alpha)}{4(1-\rho)}|v-v_*|^2} dv_* \\
\leq& \int_{\R^3} \frac{1 + |v_*|}{|v_*|} \frac{1}{|v-v_*|}e^{-\frac{(1-\rho+2\alpha)(1-\rho-2\alpha)}{4(1-\rho)}|v-v_*|^2} dv_*\\
\leq& \int_{|v_*|\geq |v-v_*|} \frac{1 + |v_*|}{|v_*|} \frac{1}{|v-v_*|}e^{-\frac{(1-\rho+2\alpha)(1-\rho-2\alpha)}{4(1-\rho)}|v-v_*|^2} dv_*\\
&+ \int_{|v_*|\leq |v-v_*|} \frac{1 + |v_*|}{|v_*|} \frac{1}{|v-v_*|}e^{-\frac{(1-\rho+2\alpha)(1-\rho-2\alpha)}{4(1-\rho)}|v-v_*|^2} dv_*\\
\leq& \int_{|v_*|\geq |v-v_*|} \left( 1+\frac{1}{|v-v_*|} \right)\frac{1}{|v-v_*|}e^{-\frac{(1-\rho+2\alpha)(1-\rho-2\alpha)}{4(1-\rho)}|v-v_*|^2} dv_*\\
&+ \int_{|v_*|\leq |v-v_*|} \frac{1 + |v_*|}{|v_*|} \frac{1}{|v_*|}e^{-\frac{(1-\rho+2\alpha)(1-\rho-2\alpha)}{4(1-\rho)}|v_*|^2} dv_*\\
\leq& 2 \int_{\R^3} \frac{1 + |v_*|}{|v_*|^2} e^{-\frac{(1-\rho+2\alpha)(1-\rho-2\alpha)}{4(1-\rho)}|v_*|^2} dv_*\\
\lesssim& 1.
\end{align*}
Therefore, we obtain
\[
\int_{\R^3} \frac{1 + |v_*|}{|v_*|} |k(v,v_*)| e^{-\alpha|v_*|^2}dv_* \lesssim e^{- \alpha |v|^2}.
\]
This completes the proof. 
\end{proof}

We use the same idea to derive a similar estimate for $\nabla_v k(v,v_*)$ once we adopt \textbf{Property A}.
\begin{lemma}\label{lem:gradient v K decay L estimate}
For $0 \leq \alpha < (1-\rho)/2$, where $\rho$ is the constant in {\bf Property A}, we have
\begin{equation*}
\int_{\R^3}  |\nabla_v k(v,v_*)|e^{-\alpha|v_*|^2}dv_*\lesssim(1+|v|)^\gamma e^{-\alpha |v|^2}. 
\end{equation*}
\end{lemma}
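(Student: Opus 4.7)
The plan is to mirror the proof of Lemma \ref{lem:K decay L estimate}, exploiting the fact that the exponential factor appearing in estimate \eqref{gradient v K decay estimate} is identical to the one in \eqref{K decay estimate}. Consequently, the algebraic identity derived in Lemma \ref{key K estimate} applies verbatim to $\nabla_v k$, yielding
\[
|\nabla_v k(v,v_*)| \lesssim \frac{1+|v|}{|v-v_*|^2(1+|v|+|v_*|)^{1-\gamma}} e^{-\alpha|v|^2} e^{-c_{\alpha,\rho}|v-v_*|^2} e^{-(1-\rho)P(v,v_*)^2} e^{\alpha|v_*|^2},
\]
where $c_{\alpha,\rho}:=(1-\rho+2\alpha)(1-\rho-2\alpha)/(4(1-\rho))>0$ under the assumption $0\leq \alpha<(1-\rho)/2$, and $P(v,v_*)$ is the inner parenthesized expression from Lemma \ref{key K estimate}.

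Next, I would multiply by $e^{-\alpha|v_*|^2}$, so that the $e^{\alpha|v_*|^2}$ factor cancels, drop the harmless factor $e^{-(1-\rho)P(v,v_*)^2}\leq 1$, and arrive at
\[
\int_{\R^3} |\nabla_v k(v,v_*)| e^{-\alpha|v_*|^2}dv_* \lesssim e^{-\alpha|v|^2} \int_{\R^3} \frac{1+|v|}{|v-v_*|^2(1+|v|+|v_*|)^{1-\gamma}} e^{-c_{\alpha,\rho}|v-v_*|^2}dv_*.
\]
To extract the desired prefactor $(1+|v|)^{\gamma}$, I would use $1+|v|\leq 1+|v|+|v_*|$ to bound $(1+|v|)(1+|v|+|v_*|)^{\gamma-1}\leq (1+|v|+|v_*|)^{\gamma}$, and then exploit $0\leq\gamma\leq 1$ together with the triangle inequality $1+|v_*|\leq (1+|v|)+|v-v_*|$ and subadditivity of $t\mapsto t^{\gamma}$ to conclude $(1+|v|+|v_*|)^{\gamma}\lesssim (1+|v|)^{\gamma}+|v-v_*|^{\gamma}$.

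Substituting this splits the remaining integral into
\[
(1+|v|)^{\gamma}\int_{\R^3} \frac{e^{-c_{\alpha,\rho}|v-v_*|^2}}{|v-v_*|^2}dv_* + \int_{\R^3} \frac{e^{-c_{\alpha,\rho}|v-v_*|^2}}{|v-v_*|^{2-\gamma}}dv_*,
\]
both of which are finite after the translation $u=v_*-v$ (in $\R^3$, $|u|^{-2}$ is integrable near the origin, and the Gaussian handles the tail). The second integral is bounded by a constant, hence controlled by $(1+|v|)^{\gamma}$ since $(1+|v|)^{\gamma}\geq 1$, which completes the estimate.

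The main technical obstacle, as compared to Lemma \ref{lem:K decay L estimate}, is that $\nabla_v k$ has both a stronger singularity $|v-v_*|^{-2}$ and an additional factor of $1+|v|$ in the numerator. Producing the sharp $(1+|v|)^{\gamma}$ prefactor therefore requires the triangle-inequality split of $(1+|v|+|v_*|)^{\gamma}$ above, which was unnecessary in the proof of the previous lemma; everything else is a direct adaptation.
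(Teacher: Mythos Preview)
Your proof is correct and follows exactly the approach the paper indicates (the paper does not spell out the argument, writing only ``We use the same idea to derive a similar estimate for $\nabla_v k(v,v_*)$''). One minor simplification: your triangle-inequality split is unnecessary, since from $\gamma-1\leq 0$ and $1+|v|\leq 1+|v|+|v_*|$ one gets directly $(1+|v|)(1+|v|+|v_*|)^{\gamma-1}\leq (1+|v|)(1+|v|)^{\gamma-1}=(1+|v|)^{\gamma}$, leaving only the single convergent integral $\int_{\R^3}|u|^{-2}e^{-c_{\alpha,\rho}|u|^2}\,du$.
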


\subsection{Estimates for the nonlinear cross section}
We provide three estimates for the nonlinear term whose cross section $B$ satisfies the assumption \eqref{assumption_B1}.

\begin{lemma} \label{lem:estimate_nonlin_1}
Let $\beta > 0$ and $0 \leq \gamma \leq 1$. Then, we have
\[
\int_{\R^3} \left(1+\frac{1}{|v_*|}\right) e^{-\beta|v_*|^2} |v - v_*|^\gamma\,dv_* \lesssim (1 + |v|)^\gamma
\]
for all $v \in \R^3$.
\end{lemma}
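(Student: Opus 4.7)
The plan is to dominate $|v-v_*|^\gamma$ by an expression that separates the dependence on $v$ and $v_*$, reducing the problem to two Gaussian integrals over $v_*$ alone. The key elementary fact I would use is the subadditivity of $t \mapsto t^\gamma$ on $[0,\infty)$ for $0 \leq \gamma \leq 1$, which gives
\[
|v - v_*|^\gamma \leq (|v| + |v_*|)^\gamma \leq |v|^\gamma + |v_*|^\gamma.
\]
Since $|v|^\gamma \leq (1+|v|)^\gamma$, it then suffices to show that the two integrals
\[
I_1 := \int_{\R^3} \left( 1 + \frac{1}{|v_*|} \right) e^{-\beta |v_*|^2}\,dv_*, \qquad I_2 := \int_{\R^3} \left( 1 + \frac{1}{|v_*|} \right) e^{-\beta |v_*|^2} |v_*|^\gamma\,dv_*
\]
are both finite constants depending only on $\beta$ and $\gamma$.

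The finiteness follows from elementary considerations in spherical coordinates. Writing $|v_*| = r$, the relevant radial integrands are $(1 + r^{-1}) e^{-\beta r^2} r^2$ for $I_1$ and $(1 + r^{-1}) e^{-\beta r^2} r^{2+\gamma}$ for $I_2$. Near $r = 0$, these behave like $r$ and $r^{1+\gamma}$ respectively (using $\gamma - 1 \geq -1$), both of which are integrable; at infinity the Gaussian factor provides absolute convergence. Hence $I_1, I_2 \lesssim 1$, and combining with the splitting above yields the desired bound $\lesssim (1+|v|)^\gamma$.

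There is essentially no obstacle here beyond invoking subadditivity correctly; the lemma is a routine Gaussian estimate. The only small point worth noting is that the bound is uniform in $v \in \R^3$ with an implicit constant depending on $\beta$ and $\gamma$ but not on $v$, which is exactly what the statement requires.
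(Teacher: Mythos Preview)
Your argument is correct and essentially identical to the paper's: both use $|v-v_*|^\gamma \leq |v|^\gamma + |v_*|^\gamma$ (triangle inequality plus subadditivity of $t\mapsto t^\gamma$) to reduce to finite Gaussian integrals, then bound $|v|^\gamma + 1 \lesssim (1+|v|)^\gamma$. The only cosmetic difference is that the paper treats the weights $1$ and $|v_*|^{-1}$ separately (indexing by $j=0,1$) rather than keeping $(1+|v_*|^{-1})$ together.
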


\begin{proof}
By the triangular inequality, we have
\begin{align*}
\int_{\R^3} \frac{1}{|v_*|^j} e^{-\beta |v_*|^2} |v - v_*|^\gamma\,dv_* \leq& |v|^\gamma \int_{\R^3} \frac{1}{|v_*|^j} e^{-\beta |v_*|^2} \,dv_* + \int_{\R^3} |v_*|^{\gamma - j} e^{-\beta |v_*|^2} \,dv_*\\
\lesssim& |v|^\gamma + 1\\
\lesssim& (1 + |v|)^\gamma
\end{align*}
for $j = 0, 1$. This completes the proof.
\end{proof}

\begin{lemma} \label{lem:estimate_nonlin_2}
Let $\beta > 0$ and $0 \leq \gamma \leq 1$. Then, we have
\[
\int_{\R^3} e^{-\beta|v_*|^2} |v - v_*|^{\gamma-1}\,dv_* \lesssim 1
\]
for all $v \in \R^3$.
\end{lemma}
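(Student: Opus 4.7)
The plan is to bound the integral uniformly in $v$ by splitting the domain of integration at the singularity $v_* = v$. The only delicate point is the factor $|v-v_*|^{\gamma-1}$, which, for $0 \le \gamma \le 1$, is singular at $v_* = v$ but integrable in three dimensions (since $\gamma - 1 > -3$), while being bounded by $1$ whenever $|v-v_*| \ge 1$.

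Concretely, I would write
\[
\int_{\R^3} e^{-\beta |v_*|^2} |v-v_*|^{\gamma-1}\,dv_* = I_1(v) + I_2(v),
\]
where $I_1$ is the integral over $\{|v-v_*| \le 1\}$ and $I_2$ over $\{|v-v_*| \ge 1\}$. For $I_1$, I would discard the Gaussian factor (using $e^{-\beta|v_*|^2} \le 1$) and change variables to $u = v_* - v$, reducing $I_1$ to $\int_{|u|\le 1} |u|^{\gamma-1}\,du$, which by polar coordinates equals $4\pi \int_0^1 r^{\gamma+1}\,dr = 4\pi/(\gamma+2)$, a constant independent of $v$. For $I_2$, since $\gamma - 1 \le 0$, the estimate $|v-v_*|^{\gamma-1} \le 1$ holds on the region of integration, so that $I_2 \le \int_{\R^3} e^{-\beta|v_*|^2}\,dv_* = (\pi/\beta)^{3/2}$, again independent of $v$.

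Summing the two contributions yields a bound of the form $C(\beta, \gamma)$, which gives the desired estimate. There is essentially no obstacle here: the only thing to verify is that the singular part of the integrand is integrable in $\R^3$, which is immediate from the condition $\gamma \ge 0$ (so that $\gamma - 1 > -3$), and that the Gaussian tail handles the behavior at infinity. The argument is in fact a slight simplification of Lemma \ref{lem:estimate_nonlin_1}, since here there is no extra $1/|v_*|$ factor to control and no growth in $v$ to track.
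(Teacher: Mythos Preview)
Your argument is correct. The decomposition at $|v-v_*|=1$ cleanly separates the local singularity (handled by $|u|^{\gamma-1}\in L^1_{\mathrm{loc}}(\R^3)$ since $\gamma-1>-3$) from the behavior at infinity (handled by the Gaussian), and the resulting bound is uniform in $v$.

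The paper takes a slightly different route: it splits according to whether $|v-v_*|<|v_*|$ or $|v-v_*|\ge|v_*|$. On the first region it uses $e^{-\beta|v_*|^2}\le e^{-\beta|v-v_*|^2}$ and shifts $v_*\mapsto v_*+v$; on the second it uses $|v-v_*|^{\gamma-1}\le |v_*|^{\gamma-1}$. Both pieces then reduce to the same $v$-independent integral $\int_{\R^3} e^{-\beta|u|^2}|u|^{\gamma-1}\,du$. The paper's decomposition is the same trick used in Lemma~\ref{lem:K decay L estimate} and has the minor advantage of producing a single closed-form bound; your decomposition is arguably more transparent, since it isolates exactly which feature of the integrand (integrable singularity versus Gaussian tail) is doing the work on each piece. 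Either approach is perfectly adequate here.
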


\begin{proof}
We decompose the integral into two parts: 
\begin{align*}
&\int_{\R^3} e^{-\beta |v_*|^2} |v - v_*|^{\gamma-1}\,dv_*\\ 
=& \int_{|v - v_*| < |v_*|} e^{-\beta |v_*|^2} |v - v_*|^{\gamma-1}\,dv_* + \int_{|v - v_*| \geq |v_*|} e^{-\beta |v_*|^2} |v - v_*|^{\gamma-1}\,dv_*\\
\leq& \int_{|v - v_*| < |v_*|} e^{-\beta |v - v_*|} |v - v_*|^{\gamma-1}\,dv_* + \int_{|v - v_*| \geq |v_*|} e^{-\beta |v_*|^2} |v_*|^{\gamma-1}\,dv_*\\
\lesssim& \int_{\R^3} e^{-\beta |v_*|^2} |v_*|^{\gamma-1}\,dv_*\\
\lesssim& 1.
\end{align*}
This completes the proof.
\end{proof}

\begin{lemma} \label{lem:estimate_nonlin_3}
Let $\Omega$ be a bounded domain in $\R^3$ and $0 \leq \gamma \leq 1$. Then, we have
\[
(1 + |v|)^\gamma \min \left\{ 1, \frac{\diam(\Omega)}{|v|} \right\} \leq 1 + \diam(\Omega)
\]
for all $v \in \R^3 \setminus \{0\}$.
\end{lemma}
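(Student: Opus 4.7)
The plan is to split into two cases depending on whether $|v|$ is at most $\diam(\Omega)$ or not. Set $d := \diam(\Omega)$ and $r := |v| > 0$. The key elementary fact that drives the whole argument is the bound $(1+r)^\gamma \leq 1+r$, which holds because $0 \leq \gamma \leq 1$ and $1+r \geq 1$. Once this is in hand, the inner minimum $\min\{1, d/r\}$ naturally produces two regimes that can be handled directly.

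In the first regime $r \leq d$, the minimum equals $1$, so the left-hand side is simply $(1+r)^\gamma$. The observation above gives $(1+r)^\gamma \leq 1+r \leq 1+d$, which is the desired conclusion. In the second regime $r > d$, the minimum equals $d/r$, and we estimate
\[
(1+r)^\gamma \cdot \frac{d}{r} \;\leq\; (1+r)\cdot\frac{d}{r} \;=\; d + \frac{d}{r} \;<\; d + 1,
\]
where the last strict inequality uses $r > d$ to guarantee $d/r < 1$.

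There is essentially no obstacle: the lemma reduces entirely to the sub-linear growth of $x \mapsto x^\gamma$ on $[1,\infty)$ for $\gamma \in [0,1]$ and to unwinding the definition of the minimum. The edge case $d = 0$ (excluded here by the assumption that $\Omega$ is a bounded domain, but harmless in any event) makes both sides trivial, and the case $v = 0$ is already excluded from the statement.
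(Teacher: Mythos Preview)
Your proof is correct and follows essentially the same approach as the paper: both split into the cases $|v| \leq \diam(\Omega)$ and $|v| > \diam(\Omega)$ and rely on the elementary bound $(1+|v|)^\gamma \leq 1+|v|$ for $0 \leq \gamma \leq 1$. The only cosmetic difference is that in the first case the paper bounds $(1+|v|)^\gamma \leq (1+\diam(\Omega))^\gamma \leq 1+\diam(\Omega)$, applying monotonicity before the power bound, whereas you reverse the order; the substance is identical.
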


\begin{proof}
For $0 < |v| < \diam(\Omega)$, we have
\[
(1 + |v|)^\gamma \min \left\{ 1, \frac{\diam(\Omega)}{|v|} \right\} \leq (1 + \diam(\Omega) )^\gamma \leq 1 + \diam(\Omega).
\]
On the other hand, for $|v| \geq \diam(\Omega)$, we have
\begin{align*}
(1 + |v|)^\gamma \min \left\{ 1, \frac{\diam(\Omega)}{|v|} \right\} \leq& \frac{(1 + |v|)^\gamma}{|v|}\diam(\Omega)\\
\leq& \left( 1 + \frac{1}{|v|} \right)\diam(\Omega)\\
\leq& 1 + \diam(\Omega). 
\end{align*}
Here and in what follows, we use the estimate; $(1 + |v|)^\gamma \leq 1 + |v|$ for all $0 \leq \gamma \leq 1$ and $v \in \R^3$. Therefore, the estimate is proved.
\end{proof}

%%%%%%%%%%%%%%%%%%%%%%%%%%%%%%%%%%%%%%%%%%%%%%%%%%%%%%%%%%%%
\subsection{Some geometrical estimates on bounded convex domains}
%%%%%%%%%%%%%%%%%%%%%%%%%%%%%%%%%%%%%%%%%%%%%%%%%%%%%%%%%%%%

We introduce some geometric properties for bounded domains with $C^2$ boundaries with uniform circumscribed and interior radii $R$ and $r$ respectively. 

Based on such sphere conditions, we derive some important estimates.

\begin{prop} \label{prop:geometric estimate C}
Given $\Omega$ with the uniform circumscribed radius $R$, we have
\[
|x-q^+(x,v)|\lesssim R N(x,v)
\]
for all $v \in \mathbb{R}^3 \setminus \{0 \}$ and $x \in \Gamma^-_v$, where $\Gamma^-_v := \{ x \in \partial \Omega \mid n(x) \cdot v < 0 \}$ and $q^+(x, v) := q(x, -v)$. 
\end{prop}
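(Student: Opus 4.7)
The plan is to reduce the estimate to a direct comparison with the circumscribing ball, exploiting the fact that at the tangency point the outward normals of $\Omega$ and of the ball coincide. I would first observe that for $x \in \Gamma^-_v$, since $n(x) \cdot v < 0$, the backward trajectory $x - sv$ immediately leaves $\Omega$, so $\tau_{x,v} = 0$ and $q(x,v) = x$. Consequently $N(x,v) = -n(x) \cdot v/|v| > 0$, and $|x - q^+(x,v)| = \tau_{x,-v}|v|$ is just the length of the forward chord of $\Omega$ from $x$ in direction $v$.

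The main step is to invoke the uniform circumscribed sphere condition: there exists a ball $B_R$ of radius $R$ with center $c$ such that $x \in \partial B_R$ and $\bar{\Omega} \subset \bar{B}_R$. Because $B_R$ is externally tangent to $\Omega$ at $x$, the outward unit normal of $B_R$ at $x$, namely $(x-c)/R$, agrees with the outward unit normal $n(x)$ of $\Omega$ at $x$. Next I would parametrize the forward ray $s \mapsto x + sv$ and solve $|x + sv - c|^2 = R^2$ to find that this ray re-enters $\partial B_R$ at $s_* = -2R(v \cdot n(x))/|v|^2$, giving a chord in $B_R$ of length $s_* |v| = 2R N(x,v)$.

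Finally, since $\bar{\Omega} \subset \bar{B}_R$, the forward trajectory of $\Omega$ starting at $x$ in direction $v$ must exit $\bar{\Omega}$ no later than it exits $\bar{B}_R$, so $\tau_{x,-v} \leq s_*$, which yields $|x - q^+(x,v)| \leq 2R N(x,v)$ and hence the desired bound. I do not expect a serious obstacle here; the only subtle point is justifying that the outward normals of $\Omega$ and $B_R$ coincide at the tangency point, which follows from $C^2$ regularity together with the one-sided containment $\bar{\Omega} \subset \bar{B}_R$.
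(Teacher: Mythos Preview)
Your argument is correct and essentially identical to the paper's proof. Both use the circumscribed ball $B_R$ tangent to $\partial\Omega$ at $x$, observe that the outward normals of $\Omega$ and $B_R$ agree there, bound the $\Omega$-chord by the $B_R$-chord via the containment $\bar\Omega\subset\bar B_R$, and compute that chord as $2R\cos\theta=2RN(x,v)$; you do this last step by solving $|x+sv-c|^2=R^2$ explicitly, whereas the paper simply cites it as a geometric observation.
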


\begin{proof}
We fix $v \in \mathbb{R}^3 \setminus \{0 \}$ and $x \in \Gamma^-_v$. By the definition of the uniform circumscribed radius, there exists a ball $B_R$ with radius $R$ such that $\partial B_R$ intersects $\partial \Omega$ at $x$. Let $O$ be the center of the ball $B_R$ and let $A$ be the intersection point of the half-line $\overrightarrow{x q^+}$ and $\partial B_R$, as Figure \ref{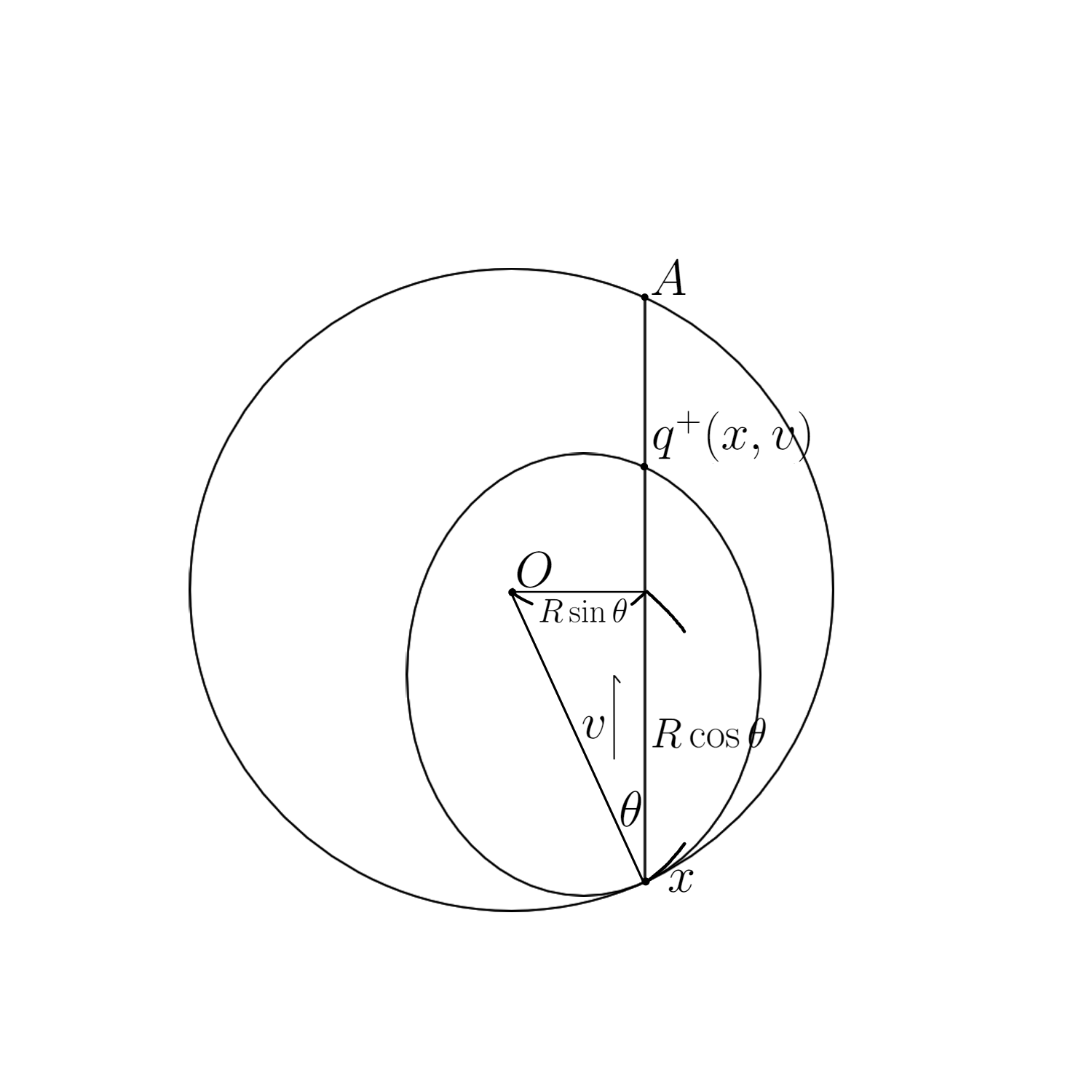} shows.

Let $\theta$ be the angle between $n(x)$ and $-v$. We note that the normal $n(x)$ to $\partial \Omega$ is also the normal to $\partial B_R$ at $x$. By a geometrical observation, it is clear that 
\[
|x - q^+(x,v)| \leq \overline{xA} \leq 2R\cos{\theta}=2RN(x,v).
\]
This completes the proof.

\begin{figure}[ht]
\centering
\includegraphics[width=0.8\linewidth]{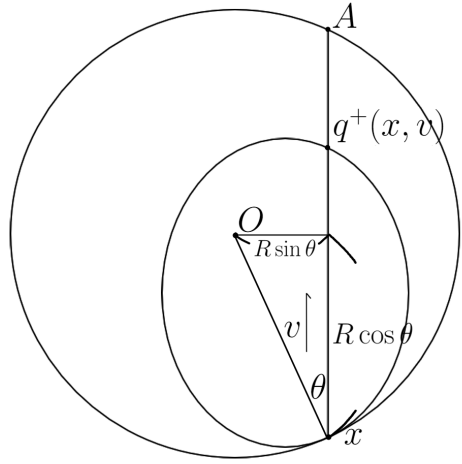}  
\caption{A picture of the cross section of $\Omega$ and $B_R$ containing $O$, $x$ and $q^+(x,v)$ in Proposition \ref{prop:geometric estimate C}.}
\label{Math08.png}
\end{figure}

\end{proof}

\begin{prop} \label{prop:geometric estimate A}
Given $\Omega$ with the uniform circumscribed radius $R$, we have
\[
d_x^{\frac{1}{2}}\leq R^{\frac{1}{2}} N(x,v)
\]
for all $x \in \overline{\Omega}$ and $v \in \mathbb{R}^3 \setminus \{0 \}$. Here, $d_x$ denotes the distance between $x$ and $\partial \Omega$.
\end{prop}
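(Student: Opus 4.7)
The plan is to exploit the uniform circumscribed sphere condition at $q := q(x,v)$ twice: first, to bound $d_x$ by the distance from $x$ to the boundary of the circumscribed ball, and second, to estimate that latter distance by $R N(x,v)^2$ via a direct algebraic computation. Taking square roots will then yield the advertised inequality.

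After disposing of the trivial case $x \in \partial\Omega$, I would apply the circumscribed sphere condition at $q$ to fix a ball $\bar B_R(O)$ with $\bar\Omega \subseteq \bar B_R(O)$ and $q \in \partial B_R(O)$; tangency of $\partial\Omega$ with $\partial B_R(O)$ at $q$ forces $O = q - R n(q)$. The geometric reduction step is to consider the ray from $x$ in the outward radial direction $(x-O)/|x-O|$: it meets $\partial B_R(O)$ at distance exactly $R - |x-O|$ from $x$, and because $\bar\Omega$ is contained in $\bar B_R(O)$ the ray must leave $\bar\Omega$ no later than this, so its first exit point is a point of $\partial\Omega$ witnessing $d_x \leq R - |x - O|$.

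For the algebraic step, plugging $x = q + \tau_{x,v}v$ and $O = q - Rn(q)$ into $|x - O|^2$ and using $v \cdot n(q) = -|v|N(x,v)$ gives
\[
R^2 - |x - O|^2 = \tau_{x,v}|v|\bigl(2R N(x,v) - \tau_{x,v}|v|\bigr).
\]
Non-negativity of the right-hand side (equivalent to $|x - O| \leq R$) constrains $s := \tau_{x,v}|v|$ to $[0, 2RN(x,v)]$, and the quadratic $s \mapsto s(2RN(x,v) - s)$ attains maximum $R^2 N(x,v)^2$ at $s = RN(x,v)$. Combining with $R + |x - O| \geq R$ gives $R - |x - O| \leq R N(x,v)^2$, and together with the geometric reduction yields $d_x \leq R N(x,v)^2$; taking square roots completes the argument, with no constants lost.

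The main step requiring care is the geometric reduction $d_x \leq R - |x - O|$, which uses the global inclusion $\bar\Omega \subseteq \bar B_R(O)$ to force the radial ray out of $\bar\Omega$ before it leaves $\bar B_R(O)$; the rest is a short algebraic manipulation. The fact that it is the \emph{radial} ray (rather than the chord along $-v$) that one must follow is what allows the sharp constant one to come out, as opposed to the looser bound one would obtain by projecting onto the tangent hyperplane at $q$.
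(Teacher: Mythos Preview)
Your proof is correct and follows essentially the same strategy as the paper: apply the circumscribed sphere condition at $q(x,v)$, use the inclusion $\bar\Omega \subseteq \bar B_R(O)$ to obtain $d_x \leq R - |x-O|$, and then bound $R - |x-O| \leq R\,N(x,v)^2$. The only difference is in how the last bound is carried out: the paper passes through the foot $B$ of the perpendicular from $O$ to the chord $xq$, giving $R - |x-O| \leq R - |OB| = R(1-\sin\theta) \leq R\cos^2\theta = R\,N(x,v)^2$, whereas you compute $|x-O|^2$ directly from $x = q + \tau_{x,v}v$ and $O = q - Rn(q)$ and maximize the resulting quadratic in $\tau_{x,v}|v|$; your route is slightly more streamlined but the content is the same.
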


\begin{proof}
By the uniform circumscribed sphere condition, for any $x \in \Omega$ and $v \in \R^3 \setminus \{0\}$, there exists a ball $B_R \supseteq \Omega$ with radius $R$ such that $q(x,v) \in \partial B_R \cap \partial \Omega$. Let $O$ be the center of the ball $B_R$.

We first consider the case where three points $O$, $x$ and $q(x,v)$ are collinear. In this case, since $\partial B_R$ shares its normal vector with $\partial \Omega$ at $q(x, v)$, we have $N(x, v)=1$ and the desired estimate is obviously true. Thus, in what follows, we assume that $O$, $x$ and $q(x,v)$ are not collinear. Moreover, we discuss the estimate in the two-dimensional section since these three points are on the same plane. 

Let $A$ be the intersection point of $\partial B_R$ and the half line $\overrightarrow{Ox}$. Take a point $B$ on the line $xq$ such that the line $OB$ is perpendicular to it. We notice that the point $B$ is the closest to $O$ among the points on the line $Oq$. Thus, the inequality $\overline{OB} \leq \overline{Ox}$ holds. Let $C$ be defined as the intersection point of $\partial B_R$ and the half line $\overrightarrow{OB}$. We provide a picture of the configuration as Figure \ref{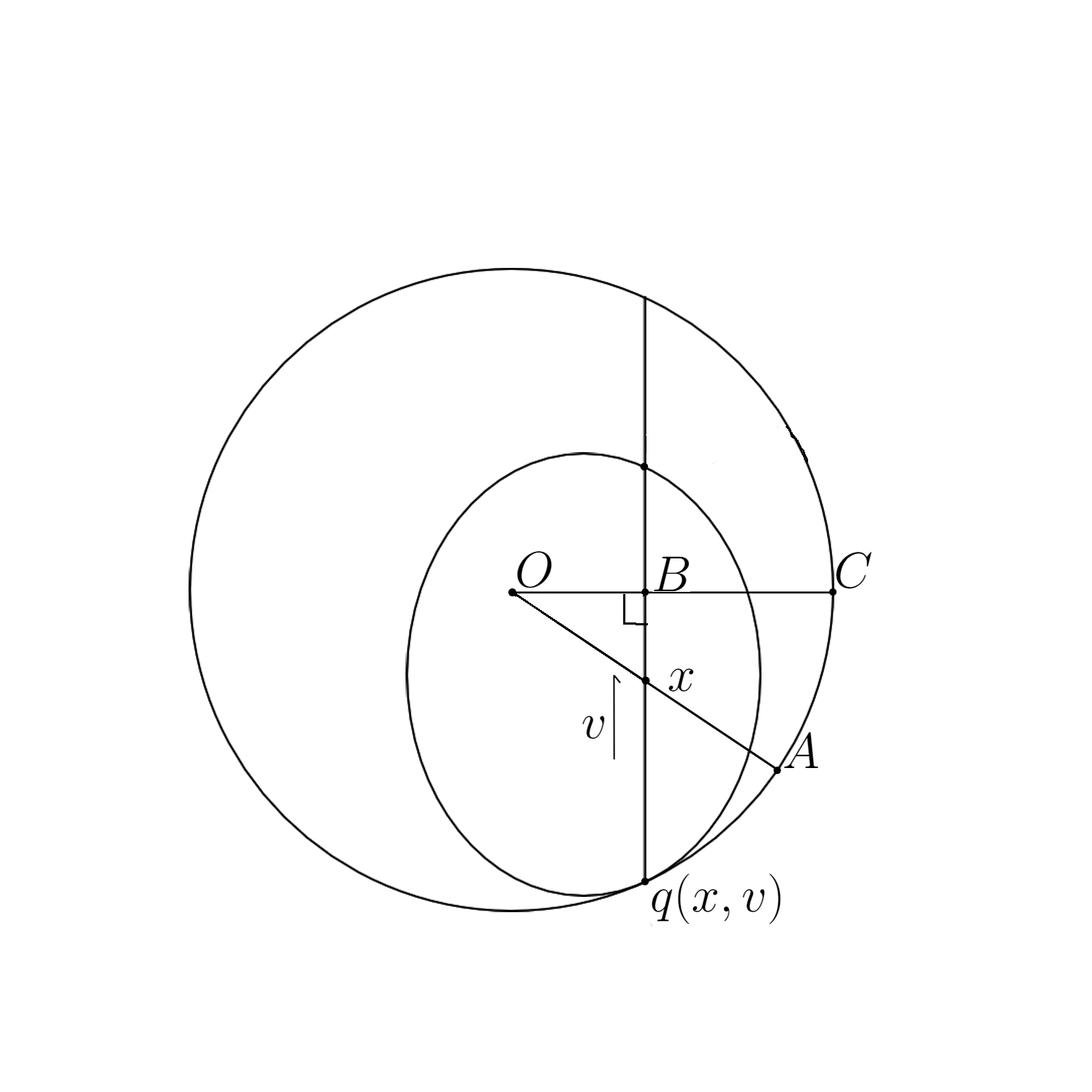}. 

\begin{figure}[ht]
\centering
\includegraphics[width=0.8\linewidth]{Math04.png}  
\caption{A picture of the cross section of $\Omega$ and $B_R$ containing $O$, $x$ and $q(x,v)$ in Proposition \ref{prop:geometric estimate A}.}
\label{Math04.png}
\end{figure}

We are ready to prove the estimate. Thanks to the condition $\Omega \subset B_R $, we have
\begin{equation*}
    d_x \leq d(x,\partial B_R).
\end{equation*}
Here, $d(x, X)$ denotes the distance between the point $x$ and the set $X$. We notice that 
\[
d(x,\partial B_R) = \overline{xA} = R - \overline{Ox} \leq R - \overline{OB} = \overline{BC}.
\]

In the same way as in the proof of Proposition \ref{prop:geometric estimate C}, let $\theta$ be the angle between $n(q(x, v))$ and $-v$. Then, as Figure \ref{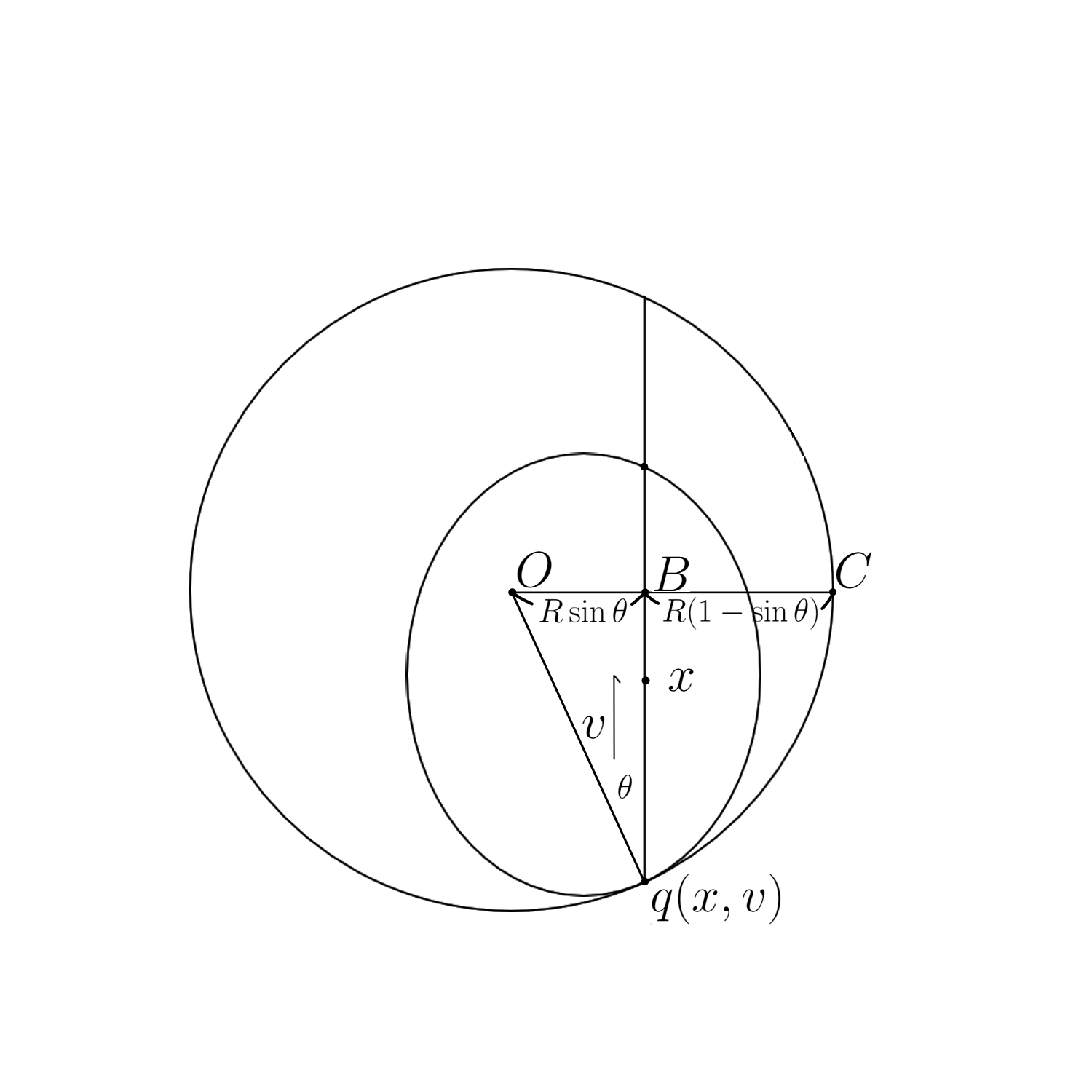} illustrates, noticing that $N(x, v) = \cos \theta$, we have 
\begin{equation*}
\overline{BC} = R (1-\sin{\theta}) = R (1-\sqrt{1-\cos^2{\theta}}) = R (1-\sqrt{1-N^2(x,v)}) \leq RN^2(x,v).    
\end{equation*}

Combining the above estimates, we obtain
\[
d_x \leq RN^2(x,v).
\]
This completes the proof.

\begin{figure}[t]
\centering
\includegraphics[width=0.8\linewidth]{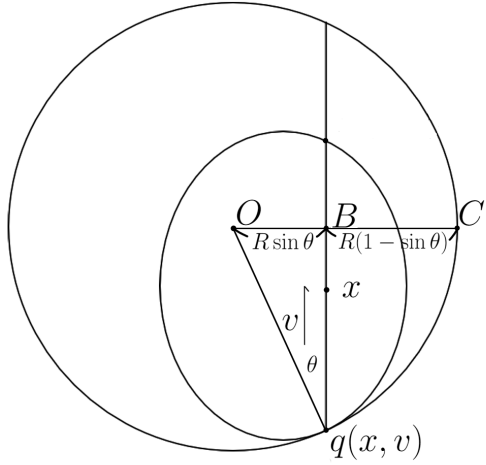}  
\caption{A picture of $O$, $B$, $C$ and $q(x,v)$ in Proposition \ref{prop:geometric estimate A} (deleting the point $A$).}
\label{Math05.png}
\end{figure}

\end{proof}

\begin{prop} \label{prop:geometric estimate B}
Given $\Omega$ with uniform circumscribed and interior radii $R$ and $r$ respectively, we have
\begin{equation*}
\int_{0}^{|q(x,v)-x|}\frac{1}{d_{x-s\frac{v}{|v|}}^{\frac{1}{2}}}\,ds \lesssim r^{\frac{1}{2}}+\frac{R}{r^{\frac{1}{2}}},
\end{equation*}
for all $x \in \overline{\Omega}$ and $v \in \R^3 \setminus \{0\}$.
\end{prop}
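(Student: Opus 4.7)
The plan is to parametrize the chord and split the integral into at most three regions, using the interior sphere condition at both endpoints together with a concavity argument in the middle. Set $e := v/|v|$, $\ell := |q(x,v) - x|$, and let $L$ denote the full chord length from $q(x,v)$ to $q^+(x,v)$. Write $N := N(x,v)$ and $N^+ := N(x,-v)$. After the change of variables $\sigma = \ell - s$, the point $y(\sigma) := q(x,v) + \sigma e$ lies in $\overline{\Omega}$ for $\sigma \in [0,L]$, with $y(0) = q(x,v)$, $y(\ell) = x$, and $y(L) = q^+(x,v)$.

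The main quantitative tool is an explicit interior sphere estimate. At $q(x,v)$, the interior ball $B_r(O_q)$ with $O_q := q(x,v) - r\,n(q(x,v))$ is contained in $\overline{\Omega}$. A direct computation gives $|y(\sigma) - O_q|^2 = r^2 - 2\sigma rN + \sigma^2$, hence
\[
d_{y(\sigma)} \;\geq\; r - \sqrt{r^2 - 2\sigma rN + \sigma^2} \;\geq\; \frac{\sigma(2rN - \sigma)}{2r}, \qquad \sigma \in [0, 2rN].
\]
In particular, $d_{y(\sigma)} \geq \sigma N/2$ on $[0, rN]$ and $d_{y(rN)} \geq rN^2/2$. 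The analogous construction at $q^+(x,v)$ yields $d_{y(\sigma)} \geq (L-\sigma)N^+/2$ on $[L-rN^+, L]$ and $d_{y(L-rN^+)} \geq r(N^+)^2/2$.

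With these bounds I would split the integral over $[0,\ell]$ into (a) $[0, \min(\ell, rN)]$, (b) the middle region $[\min(\ell, rN), \min(\ell, L-rN^+)]$, and (c) $[\min(\ell, L-rN^+), \ell]$. Parts (a) and (c) are estimated directly by the endpoint bounds and each contributes at most $2\sqrt{2r}$, i.e.\ $\lesssim r^{1/2}$. For part (b), I would use that \textbf{Assumption $\Omega$} forces $\Omega$ to be convex, which in turn makes $d_y$ a concave function on $\Omega$ (it is the infimum of affine functions measuring the signed distance to supporting hyperplanes). Concavity on $[0,L]$ together with the endpoint lower bounds above gives, on the middle region,
\[
d_{y(\sigma)} \;\geq\; \min\{d_{y(rN)},\, d_{y(L-rN^+)}\} \;\geq\; \frac{r}{2}\min(N, N^+)^2.
\]
Applying Proposition \ref{prop:geometric estimate C} at $q(x,v)$ with direction $v$ and at $q^+(x,v)$ with direction $-v$ yields $L \lesssim R\min(N, N^+)$, so the part (b) integral is bounded by $\tfrac{L}{\min(N,N^+)}\sqrt{2/r} \lesssim R/r^{1/2}$. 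Summing the three contributions gives $\lesssim r^{1/2} + R/r^{1/2}$, as claimed.

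The main obstacle is the middle-region estimate (b), because it requires simultaneously (i) the quantitative interior sphere bound at each endpoint, (ii) the qualitative concavity of $d_y$ coming from convexity of $\Omega$, and (iii) the chord-length control $L \lesssim R\min(N, N^+)$ supplied by Proposition \ref{prop:geometric estimate C}. Some care is also needed in treating the degenerate cases where one or more of the three subintervals is empty (for example $\ell \leq rN$, or $rN \geq L - rN^+$), but each of these cases only shortens the integration and makes the bound easier.
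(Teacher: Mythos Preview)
Your proof is correct and follows the same overall architecture as the paper's: split the chord into two endpoint zones (handled by the interior balls of radius $r$ touching $\partial\Omega$ at $q(x,v)$ and $q^+(x,v)$) and a middle zone whose length is controlled by Proposition \ref{prop:geometric estimate C}. The endpoint contributions each give $\lesssim r^{1/2}$ and the middle gives $\lesssim R/r^{1/2}$ in both arguments.

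The genuine difference is in how the middle zone is handled. The paper first proves the estimate for a single ball via a similar-triangles argument, then in the general case bounds $d_y$ from below by the distance to the boundary of $\mathrm{conv}(B_1\cup B_2)$, obtaining $d_{y(\sigma)}\geq r\min\{1-\sin\theta_1,1-\sin\theta_2\}$ on the middle stretch. You instead invoke the concavity of $y\mapsto d_y$ on a convex domain to interpolate between the two interior-ball values $d_{y(rN)}\geq rN^2/2$ and $d_{y(L-rN^+)}\geq r(N^+)^2/2$. Since $1-\sin\theta$ and $\cos^2\theta/2$ are comparable, the two lower bounds coincide up to constants; your route is more streamlined because it replaces the separate ball computation and the capsule geometry by a single one-line fact about convex bodies.

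One small remark: you justify convexity via \textbf{Assumption $\Omega$}, but the proposition as stated only assumes the two sphere conditions. This is harmless, since the uniform circumscribed sphere condition already forces convexity (each boundary point admits a supporting hyperplane, namely the tangent plane of the circumscribed ball), so your concavity step goes through under the stated hypotheses alone.
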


\begin{remark}
A proof of Proposition \ref{prop:geometric estimate B} was firstly explored as that of Lemma 5.12 in \cite{Ikun 2}. We repeat the proof for readers' convenience. 
\end{remark}
 
\begin{proof}
Without loss of generality, we assume that $x \in \Gamma ^{+}_v:=\{x\in \partial \Omega \mid n(x)\cdot v >0\}$.

We first consider the case where the domain is the ball with center $O$ and radius $r$. Let $A$ be the mid point of $x$ and $q(x,v)$. 

If $O = A$, then we have 
\[
\int_0^{|q(x, v) - x|} \frac{1}{d_{x - s\frac{v}{|v|}}}\,ds = 2 \int_0^r \frac{dt}{t^{\frac{1}{2}}} = 4 r^{\frac{1}{2}},
\]
which implies that the desired estimate holds. In what follows, we discuss the case where $O \neq A$. 

Let $B$ be the intersection point of $\partial B_R$ and the half line $\overrightarrow{OA}$. Take a point $C := x-sv/|v|$, $0 < s < |x-q(x,v)|$ and let $D$ be the intersection point between the line segment $xB$ and the line passing through $C$ and parallel to the line $OB$. The configuration of these points is seen in Figure \ref{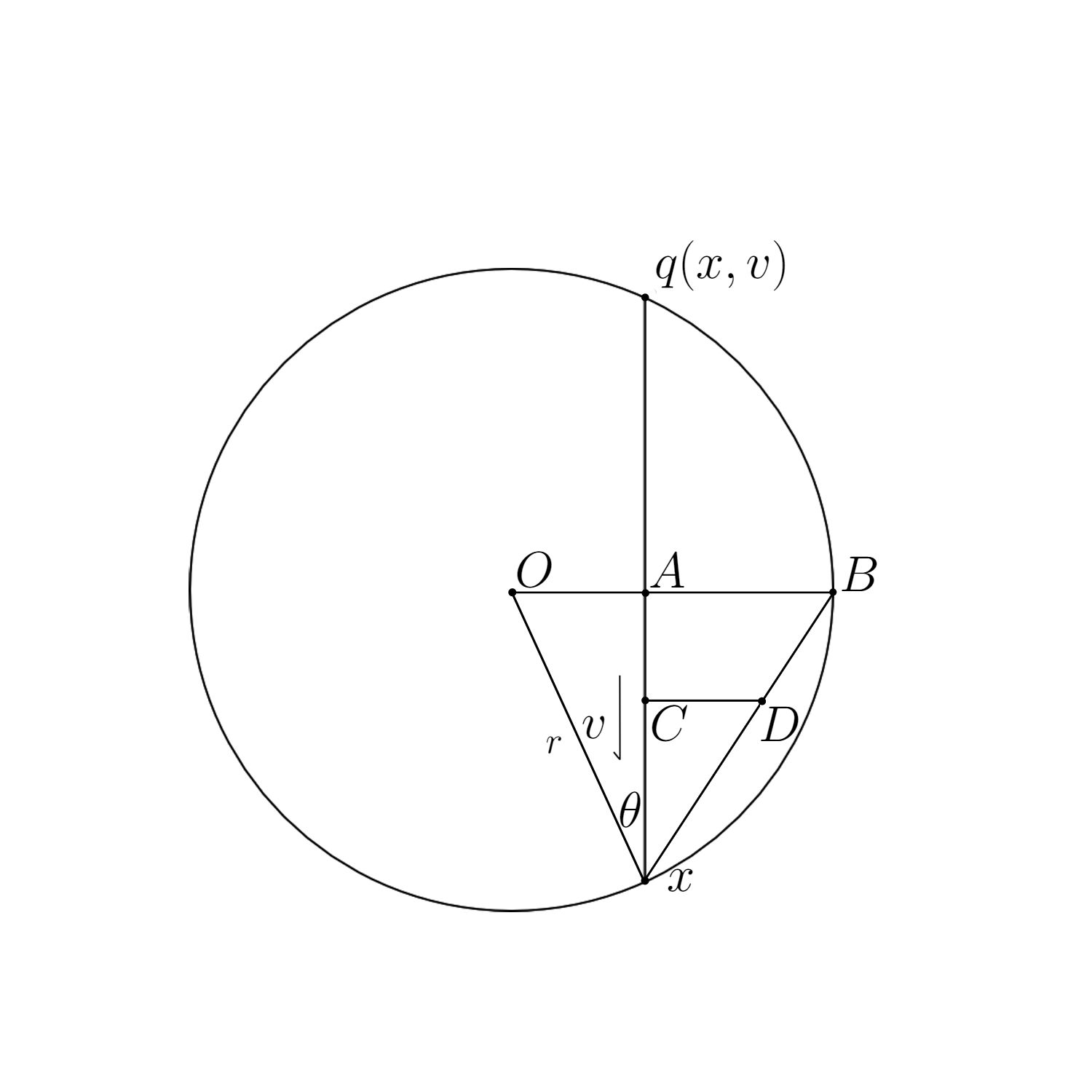}.

\begin{figure}[ht]
\centering
\includegraphics[width=0.8\linewidth]{Math06.png}
\caption{A picture of points $O$, $A$, $B$, $C$ and $D$ in Proposition \ref{prop:geometric estimate B} when the domain is a ball.}
\label{Math06.png}
\end{figure}

We claim that $d(C, \partial B_R) \geq \overline{CD}/\sqrt{2}$. Let $\theta$ be the angle between $n(x)$ and $v$, which is $\angle OxA$. Through a geometrical observation, we see that $\angle xDC = \angle xBA = \pi/4 + \theta/2$. Thus, we have
\[
d(C, \partial B_R) \geq \overline{CD} \sin \left( \frac{\pi}{4} + \frac{\theta}{2} \right) \geq \frac{\overline{CD}}{\sqrt{2}}.
\]

Since $\triangle xAB$ is similar to $\triangle x CD$, we have 
\begin{equation*}
\overline{CD} = \overline{AB}\cdot \frac{\overline{Cx}}{Ax}=r(1-\sin{\theta})\frac{\overline{Cx}}{r\cos{\theta}}=(1-\sin{\theta})\frac{\overline{Cx}}{\cos{\theta}}.
\end{equation*}
Hence the integral in the left hand side is estimated as follows:
\begin{align*}
\int_{0}^{|q(x,v)-x|}\frac{1}{d_{x-s\frac{v}{|v|}}^{\frac{1}{2}}}\,ds & \leq 2\int_{0}^{r\cos{\theta}}\frac{1}{ \left( \frac{1}{\sqrt{2}}\frac{1-\sin{\theta}}{\cos{\theta}}t \right)^{\frac{1}{2}}}\,dt\\
& \lesssim \left( \frac{\cos{\theta}}{1-\sin{\theta}} \right)^{\frac{1}{2}}\int_{0}^{r\cos{\theta}}\frac{1}{t^{\frac{1}{2}}}\,dt\\
& \lesssim \left( \frac{\cos{\theta}}{1-\sin{\theta}} \right)^{\frac{1}{2}}(r\cos{\theta})^{\frac{1}{2}} \\
& = \left( \frac{\cos^2{\theta}}{1-\sin{\theta}} \right)^{\frac{1}{2}}r^{\frac{1}{2}} \\
& = (1+\sin{\theta})^{\frac{1}{2}}r^{\frac{1}{2}} \\
& \lesssim r^{\frac{1}{2}},
\end{align*}
which is the desired estimate.    

We proceed to discuss the case for a general domain with the uniform circumscribed condition with radius $R$. Based on the uniform interior sphere condition with radius $r$, we introduce two open balls $B_1$ and $B_2$ with radii $r$ such that $x\in \partial B_1$, $\bar{B}_1 \subseteq \overline{\Omega}$ and $q(x,v)\in \partial B_2$, $\bar{B}_2 \subseteq \overline{\Omega}$.

If the line segment $xq$ is in $\overline{B_1} \cup \overline{B_2}$, then we have
\begin{align*}
&\int_0^{|q(x, v) - x|} \frac{1}{d_{x - s\frac{v}{|v|}}^{\frac{1}{2}}}\,ds\\ 
\leq& \int_{\{ x - s\frac{v}{|v|} \in B_1 \}} \frac{1}{d \left( x - s\frac{v}{|v|}, \partial B_1 \right)^{\frac{1}{2}}}\,ds + \int_{\{ x - s\frac{v}{|v|} \in B_2 \}} \frac{1}{d \left( q(x, v) - s\frac{v}{|v|}, \partial B_2 \right)^{\frac{1}{2}}}\,ds\\
\lesssim& r^{\frac{1}{2}}.
\end{align*}

If it is not the case, the line segment $xq$ is still in the convex hull of $B_1 \cup B_2$. Let $\theta_1$ is the angle between $n(x)$ and $v$, and let $\theta_2$ be that between $q(x, v)$ and $-v$. Then, we have $d_{x - sv/|v|} \geq r \min \{ 1 - \sin \theta_1, 1 - \sin \theta_2 \}$ when $x - sv/|v| \notin B_1 \cup B_2$. Also, by switching the pair $(x, v)$ and $(q^+(x, v), -v)$ in the proof of Proposition \ref{prop:geometric estimate C}, we can see that the length of the line segment $\{ x - sv/|v| \mid 0 < s < |q(x, v) - x|, x - sv/|v| \notin B_1 \cup B_2 \}$ is less than 
\[|x - q(x, v)| \lesssim R \min \{ \cos \theta_1, \cos \theta_2 \}. 
\]
Thus, we have
\begin{align*}
&\int_0^{|q(x, v) - x|} \frac{1}{d_{x - s\frac{v}{|v|}}^{\frac{1}{2}}}\,ds\\ 
\leq& \int_{\{ x - s\frac{v}{|v|} \in B_1 \}} \frac{1}{d \left( x - s\frac{v}{|v|}, \partial B_1 \right)^{\frac{1}{2}}}\,ds + \int_{\{ x - s\frac{v}{|v|} \in B_2 \}} \frac{1}{d \left( q(x, v) - s\frac{v}{|v|}, \partial B_2 \right)^{\frac{1}{2}}}\,ds\\ 
&+ \int_{\{ x - s\frac{v}{|v|} \notin B_1 \cup B_2 \}} \frac{1}{r^{\frac{1}{2}} \min \{ 1 - \sin \theta_1, 1 - \sin \theta_2 \}^{\frac{1}{2}}}\,ds\\
\lesssim& r^{\frac{1}{2}} + \frac{R}{r^{\frac{1}{2}}}.
\end{align*}
Here, we used facts that $\min \{ 1 - \sin \theta_1, 1 - \sin \theta_2 \} = 1 -\sin \theta_j$ if and only if $\min \{ \cos \theta_1, \cos \theta_2 \} = \cos \theta_j$ and that $0 \leq \cos \theta_j/(1 - \sin \theta_j)^{1/2} \lesssim 1$. 

This completes the proof.

\end{proof}

By changing variable of integration by $s = |v|t$ in Proposition \ref{prop:geometric estimate B}, we obtain the following estimate. 

\begin{corollary} \label{cor:geometric estimate B}
Given $\Omega$ with uniform circumscribed and interior radii $R$ and $r$ respectively, we have
\begin{equation*}
\int_{0}^{\tau_{x, v}} \frac{1}{d_{x-tv}^{\frac{1}{2}}}\,dt \lesssim \frac{r^{\frac{1}{2}}}{|v|} \left( 1 + \frac{R}{r} \right),
\end{equation*}
for all $x \in \overline{\Omega}$ and $v \in \R^3 \setminus \{0\}$.
\end{corollary}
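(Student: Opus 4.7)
The plan is to deduce this corollary directly from Proposition \ref{prop:geometric estimate B} by a linear rescaling of the integration variable. First, I would observe that by the definition $q(x,v) = x - \tau_{x,v} v$, we have $|q(x,v) - x| = \tau_{x,v} |v|$, so the upper limit of integration in Proposition \ref{prop:geometric estimate B} equals $\tau_{x,v} |v|$ rather than $\tau_{x,v}$ itself. The two integrals therefore differ only by the parametrization of the line segment from $x$ to $q(x,v)$: the one in Proposition \ref{prop:geometric estimate B} is by arclength $s$ (with speed one along direction $v/|v|$), while the one here is by the physical time $t$ along the trajectory $x - tv$ with speed $|v|$.

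Next, I would perform the substitution $s = |v| t$, which gives $ds = |v|\,dt$ and $x - s\,\dfrac{v}{|v|} = x - tv$. Under this substitution the range $s \in [0,\,\tau_{x,v}|v|]$ corresponds to $t \in [0, \tau_{x,v}]$. Applying the change of variables to the inequality from Proposition \ref{prop:geometric estimate B} yields
\[
|v|\int_{0}^{\tau_{x,v}} \frac{1}{d_{x - tv}^{1/2}}\,dt \;=\; \int_{0}^{|q(x,v)-x|} \frac{1}{d_{x - s\frac{v}{|v|}}^{1/2}}\,ds \;\lesssim\; r^{\frac{1}{2}} + \frac{R}{r^{\frac{1}{2}}}.
\]
Dividing both sides by $|v|$ and factoring $r^{1/2}$ out of the right-hand side gives the desired bound $\tfrac{r^{1/2}}{|v|}\bigl(1 + R/r\bigr)$.

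There is no real obstacle: the corollary is merely Proposition \ref{prop:geometric estimate B} restated in the time parametrization needed for later estimates along characteristic curves $x - tv$ of the transport operator $v \cdot \nabla_x$. The only point to check carefully is the identification $|q(x,v) - x| = \tau_{x,v}|v|$, which is immediate from the definition of $q$ and $\tau_{x,v}$.
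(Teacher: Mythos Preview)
Your proof is correct and matches the paper's approach exactly: the paper simply states that the corollary follows from Proposition~\ref{prop:geometric estimate B} by the change of variable $s = |v|t$, which is precisely the substitution you carry out.
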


%%%%%%%%%%%%%%%%%%%%%%%%%%%%%%%%%%%%%%%%%%%%%%%%%%%%%%%%%%%%
\section{Regularity for the linearized case} \label{sec:linear}
%%%%%%%%%%%%%%%%%%%%%%%%%%%%%%%%%%%%%%%%%%%%%%%%%%%%%%%%%%%%

In this section, we provide a detailed proof of the existence of a solution to the integral equation \eqref{integral form}. To make sure that the series \eqref{Picard series} converges in $\hat{L}_{\alpha}^{\infty}$, we need to estimate the $\hat{L}_{\alpha}^{\infty}$ norm for each term of the series.

We first present an important lemma.
\begin{lemma}\label{SK decay L infty estimate}
Let $\Omega$ be a bounded domain in $\R^3$ and suppose {\bf Property A}. Also, let $0 \leq \alpha < (1 - \rho)/2$. Then, given $h \in L^\infty_\alpha$, we have
\begin{align*}
|S_{\Omega}Kh(x,v)| \lesssim& |h|_{\infty,\alpha}e^{-\alpha |v|^{2}}\min \left\{ 1,\frac{\diam(\Omega)}{|v|} \right\}
\end{align*}
for a.e. $(x, v) \in \Omega \times \R^3$.
\end{lemma}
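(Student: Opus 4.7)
The plan is to reduce the estimate to the two already-proved ingredients: the velocity-integral bound on $k$ in Lemma \ref{lem:K decay L estimate}, and an elementary bound on the time integral along the characteristic.

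First, I would unwind the definitions and write
\[
S_\Omega K h(x,v) = \int_0^{\tau_{x,v}} e^{-\nu(v) s} \int_{\R^3} k(v, v_*) h(x-sv, v_*)\,dv_*\,ds.
\]
Taking absolute values and using $|h(x-sv, v_*)| \leq |h|_{\infty,\alpha} e^{-\alpha |v_*|^2}$, we separate the $v_*$- and $s$-integrals:
\[
|S_\Omega K h(x,v)| \leq |h|_{\infty,\alpha} \left(\int_{\R^3} |k(v,v_*)| e^{-\alpha |v_*|^2}\,dv_* \right) \int_0^{\tau_{x,v}} e^{-\nu(v) s}\,ds.
\]
Since $(1+|v_*|)/|v_*| \geq 1$, Lemma \ref{lem:K decay L estimate} gives
\[
\int_{\R^3} |k(v,v_*)| e^{-\alpha|v_*|^2}\,dv_* \leq \int_{\R^3} \frac{1+|v_*|}{|v_*|} |k(v,v_*)| e^{-\alpha|v_*|^2}\,dv_* \lesssim e^{-\alpha|v|^2}.
\]

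It remains to show $\int_0^{\tau_{x,v}} e^{-\nu(v) s}\,ds \lesssim \min\{1, \diam(\Omega)/|v|\}$. The bound by $1$ follows from $\nu(v) \gtrsim (1+|v|)^\gamma \geq 1$ (by \eqref{nu decay estimate}), which yields
\[
\int_0^{\tau_{x,v}} e^{-\nu(v) s}\,ds \leq \frac{1}{\nu(v)} \lesssim 1.
\]
The bound by $\diam(\Omega)/|v|$ is even simpler: $e^{-\nu(v) s} \leq 1$ and $\tau_{x,v} = |x - q(x,v)|/|v| \leq \diam(\Omega)/|v|$, hence
\[
\int_0^{\tau_{x,v}} e^{-\nu(v) s}\,ds \leq \tau_{x,v} \leq \frac{\diam(\Omega)}{|v|}.
\]
Combining these ingredients gives the claimed estimate.

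There is no genuine obstacle here; the statement is essentially a packaging of Lemma \ref{lem:K decay L estimate} with the trivial characteristic-length bound $\tau_{x,v}|v| \leq \diam(\Omega)$. The only point requiring a moment of care is noting that the integrand $\frac{1+|v_*|}{|v_*|}$ in Lemma \ref{lem:K decay L estimate} is $\geq 1$, so that lemma indeed dominates the integral we need.
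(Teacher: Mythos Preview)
Your proof is correct and follows essentially the same approach as the paper: bound $|Kh|$ pointwise via Lemma \ref{lem:K decay L estimate} and then estimate $\int_0^{\tau_{x,v}} e^{-\nu(v)s}\,ds$ using the uniform positivity of $\nu$ together with $\tau_{x,v}\le \diam(\Omega)/|v|$. Your explicit remark that $(1+|v_*|)/|v_*|\ge 1$ is the only additional detail, and it is exactly the observation needed to invoke Lemma \ref{lem:K decay L estimate}.
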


\begin{proof}
We start from the following estimate:
\begin{equation*}
\begin{split}
|S_{\Omega}Kh(x,v)| =& \left|\int_0^{\tau_{x,v}}e^{-\nu(v)s}Kh(x-vs,v)ds \right|\\
\lesssim& \int_0^{\tau_{x,v}}e^{-\nu(v)s}\left|Kh(x-vs,v)\right|ds.
\end{split}
\end{equation*}
Recall that 
\[
|h(x, v)| \lesssim |h|_{\infty, \alpha} e^{-\alpha |v|^2}
\]
for a.e. $(x, v) \in \Omega \times \R^3$. Thus, by Lemma \ref{lem:K decay L estimate}, we have
\begin{equation*}
|Kh(x-sv,v)|\lesssim |h|_{\infty, \alpha} e^{-\alpha|v|^{2}}.
\end{equation*}

We also recall that, by the assumption \eqref{nu decay estimate} in {\bf Property A}, the function $\nu$ is uniformly positive. Thus, we have 
\begin{equation} \label{est:S infty}
\int_{0}^{\tau_{x,v}}e^{-\nu(v)s}\,ds \lesssim \min \left\{ 1, \frac{\diam(\Omega)}{|v|} \right\},
\end{equation} 
and
\begin{equation*}
\begin{split}
\int_0^{\tau_{x,v}}e^{-\nu(v)s}\left|Kh(x-vs,v)\right|\,ds \lesssim& |h|_{\infty, \alpha} e^{-\alpha|v|^{2}} \int_0^{\tau_{x,v}}e^{-\nu(v)s}\,ds\\
\lesssim& |h|_{\infty, \alpha} e^{-\alpha|v|^{2}} \min \left\{ 1, \frac{\diam(\Omega)}{|v|} \right\}.
\end{split}
\end{equation*}
This completes the proof.    
\end{proof}

Lemma \ref{SK decay L infty estimate} gives the following estimate.

\begin{corollary}\label{cor:SK intergral estimate}
Let $\Omega$ be a bounded domain in $\R^3$ and suppose {\bf Property A}. Also, let $0 \leq \alpha < (1 - \rho)/2$. Then, given $h \in L^\infty_\alpha$, we have
\begin{align*}
|S_{\Omega}KS_{\Omega}Kh|_{\infty,\alpha} \lesssim& \diam(\Omega) |h|_{\infty,\alpha}.
\end{align*}
\end{corollary}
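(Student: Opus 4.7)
The plan is to bootstrap Lemma \ref{SK decay L infty estimate}: apply it once to the inner $S_\Omega K h$ to extract the decay factor $\min\{1,\diam(\Omega)/|v|\}$, and then use that extra decay together with the kernel integral bound of Lemma \ref{lem:K decay L estimate} when applying $S_\Omega K$ a second time.

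Concretely, set $g := S_\Omega K h$. Lemma \ref{SK decay L infty estimate} gives, for a.e.\ $(y, v_*)$,
\[
|g(y,v_*)| \lesssim |h|_{\infty,\alpha}\, e^{-\alpha|v_*|^2} \min\!\left\{1,\frac{\diam(\Omega)}{|v_*|}\right\} \leq |h|_{\infty,\alpha}\, e^{-\alpha|v_*|^2} \frac{\diam(\Omega)}{|v_*|},
\]
using the trivial bound $\min\{1, a\} \leq a$. This turns the inner factor $\min\{1,\diam(\Omega)/|v_*|\}$ into the combination $\diam(\Omega)\cdot |v_*|^{-1}$, in which the $\diam(\Omega)$ comes outside the $v_*$-integral and the remaining weight $|v_*|^{-1} \leq (1+|v_*|)/|v_*|$ is precisely of the form that Lemma \ref{lem:K decay L estimate} can absorb.

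Then, uniformly in $y$, the pointwise estimate
\[
|Kg(y,v)| \leq \int_{\R^3} |k(v,v_*)|\,|g(y,v_*)|\,dv_* \lesssim |h|_{\infty,\alpha} \diam(\Omega) \int_{\R^3} \frac{|k(v,v_*)| e^{-\alpha|v_*|^2}}{|v_*|}\,dv_* \lesssim |h|_{\infty,\alpha}\diam(\Omega) e^{-\alpha|v|^2}
\]
holds. Plugging this into the definition of $S_\Omega K g = S_\Omega K S_\Omega K h$ and bounding $\int_0^{\tau_{x,v}} e^{-\nu(v)s}\,ds$ by a constant via the uniform positivity of $\nu$ and \eqref{est:S infty}, I obtain
\[
|S_\Omega K S_\Omega K h(x,v)| \lesssim |h|_{\infty,\alpha}\,\diam(\Omega)\, e^{-\alpha|v|^2},
\]
which, after multiplying by $e^{\alpha|v|^2}$ and taking essential supremum in $(x,v)$, yields the desired inequality.

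No genuine obstacle arises, since everything reduces to composing the two previously established estimates. The only judgement call is how to redistribute the $\min\{1,\diam(\Omega)/|v_*|\}$ factor so as to extract $\diam(\Omega)$ without spoiling the integrability of $|k(v,v_*)|e^{-\alpha|v_*|^2}$ against the residual weight in $v_*$; the trivial inequality $\min\{1,a\} \leq a$ does exactly this, and the resulting weight $1/|v_*|$ is of the type already controlled by Lemma \ref{lem:K decay L estimate}.
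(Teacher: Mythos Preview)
Your argument is correct and essentially identical to the paper's own proof: apply Lemma \ref{SK decay L infty estimate} to the inner $S_\Omega K h$ to extract the factor $\diam(\Omega)/|v_*|$, feed the resulting $|v_*|^{-1}$ weight into Lemma \ref{lem:K decay L estimate} to bound $K S_\Omega K h$, and close with the uniform bound \eqref{est:S infty} on $\int_0^{\tau_{x,v}} e^{-\nu(v)s}\,ds$.
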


\begin{proof}
Let $h \in L^\infty_\alpha$. By Lemma \ref{SK decay L infty estimate}, we have 
\[
|S_\Omega K h(x, v)| \lesssim |h|_{\infty, \alpha} e^{-\alpha|v|^{2}} \frac{\diam(\Omega)}{|v|}
\]
for a.e. $(x, v) \in \Omega \times \R^3$. By Lemma \ref{lem:K decay L estimate} again, we have
\[
|K S_\Omega K h(x, v)| \lesssim \diam(\Omega) |h|_{\infty, \alpha} \int_{\R^3} \frac{1}{|v_*|} |k(v, v_*)| e^{-\alpha|v_*|^{2}} \lesssim \diam(\Omega) |h|_{\infty, \alpha}
\]
for a.e. $(x, v) \in \Omega \times \R^3$. The conclusion follows from the estimate \eqref{est:S infty}.
\end{proof}

For the estimate on the $x$ derivative, we need to use some geometric properties of $\Omega$. Recall that we assume uniform sphere conditions.  
\begin{lemma}\label{SK decay x gradient L infty estimate}
Suppose {\bf Property A} and let $0 \leq \alpha < (1 - \rho)/2$. Also, suppose {\bf Assumption $\Omega$} with uniform circumscribed and interior radii $R$ and $r$ respectively. Then, given $h \in \hat{L}^{\infty}_{\alpha}$, we have the following estimate:
\begin{equation*}
|\nabla_x S_{\Omega}Kh|_{\infty,\alpha,w} \lesssim |h|_{\infty,\alpha}+ (Rr)^{\frac{1}{2}} \left( 1+\frac{R}{r} \right) \| h \|_{\infty,\alpha}.
\end{equation*}
\end{lemma}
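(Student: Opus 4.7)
The plan is to apply the Leibniz rule to the $x$-derivative of the characteristic integral defining $S_\Omega K h$, giving one boundary contribution (from differentiating $\tau_{x,v}$) and one interior contribution (from differentiating under the integral). Concretely, since the operator $K$ acts only in the $v$-variable,
\[
\nabla_x S_\Omega Kh(x,v) = e^{-\nu(v)\tau_{x,v}}\, Kh(q(x,v),v)\, \nabla_x\tau_{x,v} + \int_0^{\tau_{x,v}} e^{-\nu(v)s} \int_{\R^3} k(v,v_*)\, \nabla_x h(x-sv,v_*)\,dv_*\,ds.
\]
I will bound the two pieces separately and then multiply by $w(x,v)$.

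For the boundary term, I will use the standard formula $\nabla_x\tau_{x,v} = n(q(x,v))/(n(q(x,v))\cdot v)$, so that $|\nabla_x\tau_{x,v}| \lesssim 1/(|v|\,N(x,v))$, and then invoke Lemma \ref{lem:K decay L estimate} to bound $|Kh(q(x,v),v)| \lesssim |h|_{\infty,\alpha}\,e^{-\alpha|v|^2}$. Multiplying by $w(x,v) = |v|N(x,v)/(|v|+1)$, the singular factor $1/(|v|N(x,v))$ cancels and leaves $e^{-\alpha|v|^2}|h|_{\infty,\alpha}/(|v|+1)$, which produces the $|h|_{\infty,\alpha}$ term on the right-hand side.

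For the interior (volume) term, the key is that $h \in \hat L^\infty_\alpha$ only controls $\nabla_x h$ with the degenerate weight $w$, so $|\nabla_x h(x-sv,v_*)| \lesssim \|h\|_{\infty,\alpha}\, e^{-\alpha|v_*|^2}(|v_*|+1)/(|v_*|\,N(x-sv,v_*))$. The crucial move is to decouple the $v_*$-dependence in $N(x-sv,v_*)$ using Proposition \ref{prop:geometric estimate A}, which gives
\[
\frac{1}{N(x-sv,v_*)} \le \frac{R^{1/2}}{d_{x-sv}^{1/2}}
\]
uniformly in $v_*$. Then the $v_*$-integral is handled by Lemma \ref{lem:K decay L estimate}, yielding the factor $e^{-\alpha|v|^2}$, and what remains is
\[
R^{1/2}\|h\|_{\infty,\alpha}\,e^{-\alpha|v|^2}\int_0^{\tau_{x,v}} \frac{e^{-\nu(v)s}}{d_{x-sv}^{1/2}}\,ds.
\]
Bounding the $s$-integral by Corollary \ref{cor:geometric estimate B} gives $r^{1/2}(1+R/r)/|v|$, so after multiplying by $w(x,v)$ the $1/|v|$ is absorbed and the whole contribution is $\lesssim (Rr)^{1/2}(1+R/r)\|h\|_{\infty,\alpha}\,e^{-\alpha|v|^2}$, matching the second term on the right-hand side.

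The main obstacle is the interior term: the a priori control on $\nabla_x h$ deteriorates like $1/N(\cdot,v_*)$ at the boundary, and this singularity depends on the auxiliary velocity $v_*$, which would otherwise prevent us from separating the $v_*$-integral from the $s$-integral. Replacing $1/N(x-sv,v_*)$ by $R^{1/2}/d_{x-sv}^{1/2}$ via the uniform circumscribed sphere condition is exactly what removes the $v_*$-dependence and makes the Fubini-style splitting work; the subsequent line-integral estimate is then the only place where the geometric constants $R$ and $r$ enter, reproducing the factor $(Rr)^{1/2}(1+R/r)$ in the statement.
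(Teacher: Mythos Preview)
Your proposal is correct and follows essentially the same approach as the paper: the same Leibniz-rule decomposition into a boundary term (handled via $|\nabla_x\tau_{x,v}|=1/(|v|N(x,v))$ and Lemma~\ref{lem:K decay L estimate}) and an interior term (handled by decoupling $1/N(x-sv,v_*)$ via Proposition~\ref{prop:geometric estimate A}, integrating in $v_*$ with Lemma~\ref{lem:K decay L estimate}, and then in $s$ with Corollary~\ref{cor:geometric estimate B}). Your identification of the decoupling step as the crux is exactly the point the paper's proof turns on.
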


\begin{proof}
It is known in \cite{Eso 1} that
\begin{equation*}
\vert \nabla_{x}\tau_{x,v} \vert = \frac{\vert  -n(q(x,v)) \vert }{\vert v \vert N(x,v)}= \frac{1}{\vert v \vert N(x,v)}.
\end{equation*}
Also, by the definition of $S_{\Omega}$, we have
\[
\nabla_{x}S_{\Omega}Kh(x, v) = S_{\Omega, x} K h(x, v) + S_\Omega K \nabla_x h(x, v),
\]
where
\[
S_{\Omega, x} h(x, v) := -\frac{n(q(x, v))}{|v| N(x, v)} e^{-\nu(v) \tau_{x, v}} h(q(x, v), v).
\]

For the former term, we use Lemma \ref{lem:K decay L estimate} to derive
\begin{equation*}
\left| S_{\Omega, x} K h(x, v) \right|
\lesssim\frac{1}{|v|N(x,v)}|h|_{\infty, \alpha}e^{-\alpha|v|^{2}}.
\end{equation*}
For the latter term, we have
\begin{equation*}
\begin{split}
|S_{\Omega} K \nabla_{x}h(x, v)| &=\left|\int_{0}^{\tau_{x,v}}e^{-\nu(v)s}K \nabla_{x} h(x-vs,v)\,ds\right|\\
&\leq\int_{0}^{\tau_{x,v}}e^{-\nu(v)s}\left|K \nabla_{x} h(x-vs,v)\right|\,ds.
\end{split}
\end{equation*}
Since by definition of $| \cdot |_{\infty, \alpha, w}$, we deduce that
\begin{equation*}
|\nabla_{x} h(x, v)| \leq |\nabla_{x} h|_{\infty,\alpha,w} w(x, v)^{-1} e^{-\alpha|v|^{2}} \leq \| h \|_{\infty,\alpha} w(x, v)^{-1} e^{-\alpha|v|^{2}}.
\end{equation*}
Here, we recall that $w(x, v) = |v| N(x, v)/(1 + |v|)$. Thus, we have
\begin{equation*}
\begin{split}
|K \nabla_{x} h (x, v)| &=\left|\int_{\mathbb{R}^{3}}k(v,v_{*})\nabla_{x} h(x,v_{*})dv_{*}\right|\\
&\leq \| h \|_{\infty,\alpha} \int_{\mathbb{R}^{3}}|k(v,v_{*})| w(x, v_*)^{-1} e^{-\alpha|v_{*}|^{2}}dv_{*}.
\end{split}
\end{equation*}
By Lemma \ref{lem:K decay L estimate} and Proposition \ref{prop:geometric estimate A}, we get
\begin{equation*}
\begin{split}
\int_{\mathbb{R}^{3}}|k(v,v_{*})| w(x, v_*)^{-1} e^{-\alpha|v_{*}|^{2}}dv_{*}\lesssim& \frac{R^\frac{1}{2}}{d_{x}^{\frac{1}{2}}}\int_{\mathbb{R}^{3}} \frac{1 + |v_*|}{|v_{*}|} |k(v,v_{*})| e^{-\alpha|v_{*}|^{2}}dv_{*}\\
\lesssim& \frac{R^\frac{1}{2}}{d_{x}^{\frac{1}{2}}}e^{-\alpha|v|^{2}}.
\end{split}
\end{equation*}
Hence, using Corollary \ref{cor:geometric estimate B}, we obtain
\begin{equation*}
\begin{split}
|S_{\Omega} K \nabla_{x} h(x, v)| \lesssim& \int_{0}^{\tau_{x,v}}e^{-\nu(v)s} | \nabla_x h |_{\infty,\alpha,w} \frac{R^{\frac{1}{2}}}{d_{x-sv}^{\frac{1}{2}}}e^{-\alpha|v|^{2}}\,ds\\
\leq& R^{\frac{1}{2}} \| h \|_{\infty,\alpha} e^{-\alpha|v|^{2}} \int_{0}^{\tau_{x,v}} \frac{1}{d_{x-sv}^{\frac{1}{2}}}\,ds\\
\lesssim& w(x, v)^{-1} (Rr)^{\frac{1}{2}} \left( 1 + \frac{R}{r} \right) \| h \|_{\infty,\alpha} e^{-\alpha|v|^{2}}.
\end{split}
\end{equation*}
Therefore, we have
\begin{equation*}
| S_{\Omega} K \nabla_{x} h |_{\infty,\alpha,w} \lesssim (Rr)^{\frac{1}{2}} \left( 1+\frac{R}{r} \right) \| h \|_{\infty,\alpha}.
\end{equation*}
Hence, we conclude that 
\begin{equation*}
|\nabla_x S_{\Omega}Kh|_{\infty,\alpha,w} \lesssim |h|_{\infty,\alpha}+ (Rr)^{\frac{1}{2}} \left( 1+\frac{R}{r} \right) \| h \|_{\infty,\alpha},
\end{equation*}
which completes the proof.
\end{proof}

For the $v$ derivatives, we have the following estimate.
\begin{lemma}\label{SK decay v gradient L infty estimate}
Suppose {\bf Property A} and let $0 \leq \alpha < (1 - \rho)/2$. Also, suppose {\bf Assumption $\Omega$} with uniform circumscribed and interior radii $R$ and $r$ respectively. Then, given $h \in \hat{L}^{\infty}_{\alpha}$ , we have the follow estimate:
\begin{equation*}
|\nabla_v S_{\Omega}Kh|_{\infty,\alpha,w}\lesssim (1 + \diam(\Omega))|h|_{\infty,\alpha}+ (Rr)^{\frac{1}{2}} \left( 1+\frac{R}{r} \right) \| h \|_{\infty,\alpha}.
\end{equation*}
\end{lemma}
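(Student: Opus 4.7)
The plan is to differentiate the integral formula $S_\Omega Kh(x, v) = \int_0^{\tau_{x, v}} e^{-\nu(v) s} Kh(x - sv, v)\,ds$ in $v$ using the Leibniz and chain rules, producing four distinct contributions, and then to estimate each contribution separately. I first record the formula $|\nabla_v \tau_{x, v}| = \tau_{x, v}/(|v| N(x, v))$, obtained by implicit differentiation of the boundary relation $\phi(x - \tau_{x, v} v) = 0$ for a defining function $\phi$ of $\partial \Omega$, which mirrors the identity $|\nabla_x \tau_{x,v}| = 1/(|v| N(x,v))$ used in Lemma \ref{SK decay x gradient L infty estimate}. Writing $K_v h(y, v) := \int_{\R^3} (\nabla_v k)(v, v_*) h(y, v_*)\,dv_*$ and distributing the $v$-derivative across the position slot $x - sv$ and the direct velocity slot of the kernel $k$ yields
\begin{align*}
\nabla_v S_\Omega Kh(x, v) =& (\nabla_v \tau_{x, v}) e^{-\nu(v) \tau_{x, v}} Kh(q(x, v), v) - \int_0^{\tau_{x, v}} s (\nabla \nu)(v) e^{-\nu(v) s} Kh(x - sv, v)\,ds\\
& + \int_0^{\tau_{x, v}} e^{-\nu(v) s} K_v h(x - sv, v)\,ds - \int_0^{\tau_{x, v}} s e^{-\nu(v) s} K(\nabla_x h)(x - sv, v)\,ds.
\end{align*}

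The boundary term is bounded by combining $\tau_{x, v} e^{-\nu(v) \tau_{x, v}} \leq 1/(e\nu(v)) \lesssim 1$ with the pointwise estimate $|Kh(q(x, v), v)| \lesssim |h|_{\infty, \alpha} e^{-\alpha |v|^2}$ from Lemma \ref{lem:K decay L estimate}, yielding a bound of order $|h|_{\infty, \alpha} e^{-\alpha |v|^2}/(|v| N(x, v))$; multiplication by $w(x, v) = |v| N(x, v)/(1 + |v|)$ contributes $|h|_{\infty, \alpha}$ to the weighted norm. The second term uses \eqref{gradient nu decay estimate} together with $\int_0^{\tau_{x, v}} s e^{-\nu(v) s}\,ds \leq 1/\nu(v)^2 \lesssim (1 + |v|)^{-2\gamma}$ and Lemma \ref{lem:K decay L estimate}, giving a pointwise bound of order $(1 + |v|)^{-1-\gamma} |h|_{\infty, \alpha} e^{-\alpha |v|^2}$, which contributes another $|h|_{\infty, \alpha}$ after weighting. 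The third term is new compared to Lemma \ref{SK decay x gradient L infty estimate} and features the $(1 + |v|)^\gamma$ growth of $\nabla_v k$: by Lemma \ref{lem:gradient v K decay L estimate} one has $|K_v h(x - sv, v)| \lesssim |h|_{\infty, \alpha}(1 + |v|)^\gamma e^{-\alpha |v|^2}$; combining with $\int_0^{\tau_{x, v}} e^{-\nu(v) s}\,ds \leq \min(\tau_{x, v}, 1/\nu(v)) \lesssim \min(\diam(\Omega)/|v|, 1)$ and applying Lemma \ref{lem:estimate_nonlin_3} to absorb the $(1 + |v|)^\gamma$ factor produces the $(1 + \diam(\Omega))|h|_{\infty, \alpha}$ piece of the final estimate.

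The fourth term is the main obstacle and yields the $(Rr)^{1/2}(1 + R/r) \|h\|_{\infty, \alpha}$ contribution. It is handled in parallel with the $\nabla_x h$ term in the proof of Lemma \ref{SK decay x gradient L infty estimate}: using the $\hat L^\infty_\alpha$ norm to bound $|\nabla_x h(x - sv, v_*)| \leq \|h\|_{\infty, \alpha} w(x - sv, v_*)^{-1} e^{-\alpha |v_*|^2}$, invoking Proposition \ref{prop:geometric estimate A} to replace $N(x - sv, v_*)^{-1}$ by $R^{1/2} d_{x - sv}^{-1/2}$, and performing the $v_*$-integration via Lemma \ref{lem:K decay L estimate} yields the pointwise bound $|K(\nabla_x h)(x - sv, v)| \lesssim \|h\|_{\infty, \alpha} R^{1/2} d_{x - sv}^{-1/2} e^{-\alpha |v|^2}$. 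The key observation is that the extra factor of $s$ present in this integrand (absent in the $x$-derivative case) can be absorbed into $s e^{-\nu(v) s} \lesssim 1/\nu(v) \lesssim 1$, so the remaining spatial integral is controlled directly by Corollary \ref{cor:geometric estimate B}: $\int_0^{\tau_{x, v}} d_{x - sv}^{-1/2}\,ds \lesssim r^{1/2}(1 + R/r)/|v|$. Multiplying by $w(x, v) \leq |v|/(1 + |v|)$ then cancels the residual $1/|v|$, producing the $(Rr)^{1/2}(1 + R/r) \|h\|_{\infty, \alpha}$ bound. Summing the four contributions yields the stated inequality.
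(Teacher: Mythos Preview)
Your proposal is correct and follows essentially the same approach as the paper: the same four-term Leibniz decomposition, the same identity $|\nabla_v \tau_{x,v}| = \tau_{x,v}/(|v|N(x,v))$, and the same lemmas (Lemma~\ref{lem:K decay L estimate}, Lemma~\ref{lem:gradient v K decay L estimate}, Proposition~\ref{prop:geometric estimate A}, Corollary~\ref{cor:geometric estimate B}) applied to each piece. The only cosmetic differences are that you control $s e^{-\nu(v)s}$ and $\tau_{x,v} e^{-\nu(v)\tau_{x,v}}$ via $x e^{-x} \lesssim 1$ directly, whereas the paper uses $s e^{-\nu(v)s} \lesssim e^{-\nu(v)s/2}$, and you invoke Lemma~\ref{lem:estimate_nonlin_3} for the $K_v h$ term while the paper bounds $(1+|v|)^\gamma \int_0^{\tau_{x,v}} e^{-\nu s}\,ds \leq \diam(\Omega)(1+|v|)/|v|$ directly.
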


\begin{proof}
Notice that
\begin{align*}
\nabla_{v}S_{\Omega}Kh(x, v) =& S_{\Omega, v} K h(x, v) - (\nabla_v \nu(v)) S_{\Omega, s} K h(x, v)\\ 
&- S_{\Omega, s} K \nabla_x h(x, v) + S_\Omega K_v h(x, v),
\end{align*}
where
\begin{align*}
S_{\Omega, v} h(x, v) := &(\nabla_{v}\tau_{x,v}) e^{-\nu(v)\tau_{x,v}} h(q(x,v),v),\\
S_{\Omega, s} h(x, v) := &\int_{0}^{\tau_{x,v}} e^{-\nu(v)s} s Kh(x-sv,v)\,ds,\\
K_v h(x, v) :=& \int_{\mathbb{R}^{3}} \nabla_{v}k(v,v_{*}) h(x,v_{*})\,dv_{*}.
\end{align*}

For the first term in the right hand side, we notice that
\begin{equation*}
\vert \nabla_{v}\tau_{x,v} \vert = \frac{\vert x-q(x,v) \vert \vert  n(q(x,v)) \vert }{\vert v \vert^{2}N(x,v)}= \frac{\vert x-q(x,v) \vert  }{\vert v \vert^{2}N(x,v)}.
\end{equation*}
As a result, we have
\begin{equation*}
\begin{split} 
|S_{\Omega, v} K h(x, v)| \leq& \frac{\vert x-q(x,v) \vert}{\vert v \vert^{2}N(x,v)}e^{-\nu(v)\tau_{x,v}}|Kh(q(x,v),v)|\\
\lesssim& |h|_{\infty,\alpha}\frac{1}{|v| N(x,v)\nu(v)}\frac{\nu(v) |x-q(x,v)|}{|v|}e^{-\nu(v)\frac{|x-q(x,v)|}{|v|}}e^{-\alpha|v|^{2}}\\
\lesssim& |h|_{\infty,\alpha}\frac{1}{|v| N(x,v)}e^{-\alpha|v|^{2}}.
\end{split}
\end{equation*}
Here, we used the estimate \eqref{nu decay estimate} in order to guarantee that $\nu(v)^{-1} \lesssim 1$ for all $v \in \R^3$.

For the second and the third terms, we notice that $e^{-\nu(v)s}s \lesssim e^{-\nu(v)s/2}$ for all $s \geq 0$ and $v \in \R^3$ due to the property \eqref{nu decay estimate}. Thus, they are estimated in the same way as in the proof of Lemma \ref{SK decay L infty estimate} and Lemma \ref{SK decay x gradient L infty estimate} to yield
\[
|(\nabla_v \nu(v)) S_{\Omega, s} K h(x, v)| \lesssim |h|_{\infty, \alpha} e^{-\alpha |v|^2}
\]
and
\[
|S_{\Omega, s} K \nabla_x h(x, v)| \lesssim w(x, v)^{-1} \| h \|_{\infty,\alpha}(Rr)^{\frac{1}{2}} \left( 1+\frac{R}{r} \right) e^{-\alpha|v|^{2}}.
\]

For the last term, we recall Lemma \ref{lem:gradient v K decay L estimate} to control the integral:
\begin{equation*}
\begin{split} 
|S_\Omega K_v h(x, v)| \lesssim& |h|_{\infty,\alpha}\int_{0}^{\tau_{x,v}}e^{-\nu(v)s} \int_{\mathbb{R}^{3}}|\nabla_{v}k(v,v_{*})|e^{-\alpha|v_{*}|^{2}}dv_{*}ds\\
\lesssim& |h|_{\infty,\alpha}e^{-\alpha|v|^{2}}(1+|v|)^{\gamma}\int_{0}^{\tau_{x,v}}e^{-\nu(v)s}ds\\
\leq& |h|_{\infty,\alpha}e^{-\alpha|v|^{2}} \frac{1 + |v|}{|v|} \diam(\Omega).
\end{split}
\end{equation*}

The conclusion follows from the above estimates. We note that $|N(x, v)| \leq 1$ for all $(x, v) \in \Omega \times (\R^3 \setminus \{0\})$.
\end{proof}

We are ready to give a proof of Lemma \ref{lem:existence linear}. Consider the integral form of \eqref{integral form}. By using Picard iteration, we already derived a formal solution as the series \eqref{Picard series}. By Lemma \ref{lem:K decay L estimate}, \ref{SK decay L infty estimate}, \ref{SK decay x gradient L infty estimate}, and \ref{SK decay v gradient L infty estimate}, we derive the following estimate:
\begin{align*}
||(S_{\Omega}K)^i(Jg+S_{\Omega}\phi)||_{\infty,\alpha} \lesssim& (1 + \diam(\Omega))|(S_{\Omega}K)^{i-1}(Jg+S_{\Omega}\phi)|_{\infty,\alpha}\\
&+(Rr)^{\frac{1}{2}} \left( 1+\frac{R}{r} \right)||(S_{\Omega}K)^{i-1}(Jg+S_{\Omega}\phi)||_{\infty,\alpha}
\end{align*}
for all $i\geq 1$. With $\delta > 0$ in \eqref{est_smallness_linear} small enough, we have
\begin{equation*}
\begin{split}
&||(S_{\Omega}K)^i(Jg+S_{\Omega}\phi)||_{\infty,\alpha}-\frac{1}{2}||(S_{\Omega}K)^{i-1}(Jg+S_{\Omega}\phi)||_{\infty,\alpha}\\
\lesssim& (1+\diam(\Omega))|(S_{\Omega}K)^{i-1}(Jg+S_{\Omega}\phi)|_{\infty,\alpha}.
\end{split}
\end{equation*}
Therefore,
\begin{equation*}
\begin{split}
&\sum_{i=0}^{n}| |(S_{\Omega}K)^i(Jg+S_{\Omega}\phi)| |_{\infty,\alpha}\\
\lesssim& (1+\diam(\Omega)) \sum_{i=0}^{n}|(S_{\Omega}K)^{i}(Jg+S_{\Omega}\phi)|_{\infty,\alpha}+||Jg+S_{\Omega}\phi||_{\infty,\alpha}.
\end{split}
\end{equation*}
Furthermore, when $\delta > 0$ in \eqref{est_smallness_linear} is small enough, Corollary \ref{SK decay L infty estimate} yields
\begin{equation*}
|(S_{\Omega}K)^{i+2}(Jg+S_{\Omega}\phi)|_{\infty,\alpha}
\leq \frac{1}{2}|(S_{\Omega}K)^{i}(Jg+S_{\Omega}\phi) |_{\infty,\alpha}.
\end{equation*}
Hence, we get
\begin{align*}
&\sum_{i=0}^{2n-1}|(S_{\Omega}K)^{i}(Jg+S_{\Omega}\phi)|_{\infty,\alpha}\\
\leq& \sum_{i=1}^{n}|(S_{\Omega}K)^{2i-2}(Jg+S_{\Omega}\phi)|_{\infty,\alpha} +\sum_{i=1}^{n}|(S_{\Omega}K)^{2i-1}(Jg+S_{\Omega}\phi)|_{\infty,\alpha}\\
\leq& \sum_{i=1}^{n}\frac{1}{2^{i-1}}|Jg+S_{\Omega}\phi |_{\infty,\alpha} +\sum_{i=1}^{n}\frac{1}{2^{i-1}}|S_{\Omega}K(Jg+S_{\Omega}\phi) |_{\infty,\alpha}\\
\lesssim& |Jg+S_{\Omega}\phi |_{\infty,\alpha} +|S_{\Omega}K(Jg+S_{\Omega}\phi) |_{\infty,\alpha}\\
\lesssim& |Jg+S_{\Omega}\phi |_{\infty,\alpha}.
\end{align*}

In conclusion, the following estimate holds:
\begin{equation*}
\sum_{i=0}^{n}||(S_{\Omega}K)^i(Jg+S_{\Omega}\phi)||_{\infty,\alpha} \lesssim  (1+\diam(\Omega))|Jg+S_{\Omega}\phi|_{\infty, \alpha}+||Jg+S_{\Omega}\phi||_{\infty, \alpha},
\end{equation*}
which implies the convergence in $\hat{L}^{\infty}_{\alpha}$ of \eqref{Picard series}. This completes the proof of Lemma \ref{lem:existence linear}.

%%%%%%%%%%%%%%%%%%%%%%%%%%%%%%%%%%%%%%%%%%%%%%%%%%%%%%%%%%%%%%%%%%%
\section{Regularity for the nonlinear case}
%%%%%%%%%%%%%%%%%%%%%%%%%%%%%%%%%%%%%%%%%%%%%%%%%%%%%%%%%%%%%%%%%%%

To solve the nonlinear problem, we consider the following iteration scheme:
\begin{equation*}
\begin{cases}
v \cdot \nabla_{x}f_{1}+\nu(v)f_{1}=Kf_{1}, \\
v \cdot \nabla_{x}f_{i+1}+\nu(v)f_{i+1}=Kf_{i+1}+\Gamma(f_{i},f_{i}) \mbox{ for} \ i\geq 1, 
\end{cases}
(x, v) \in \Omega \times \R^3
\end{equation*}
with the boundary condition
\[
f_i(x,v)=g(x,v), \quad (x,v)\in \Gamma^-, i \geq 0.
\]
 
Our goal is to prove that the sequence of functions $\{ f_{i} \}$ converges in $\hat{L}^{\infty}_{\alpha}$ space. The key ingredients are to show the estimate in Lemma \ref{lem:bilinear_est} and to use this estimate in order to derive the convergence of the sequence $\{ f_i \}$.

To do this, we decompose the nonlinear term $\Gamma$ into two parts:
\[
\Gamma(h_1,h_2) = \pi^{-\frac{3}{4}} \left( \Gamma_{\gain}(h_1, h_2) - \Gamma_{\loss}(h_1, h_2) \right),
\]
where
\begin{align*}
\Gamma_{\gain}(h_1, h_2) :=& \int_{\mathbb{R}^{3}}\int_{0}^{2\pi}\int_{0}^{\frac{\pi}{2}}e^{-\frac{\vert v_{*} \vert^{2}}{2}}h_1(v')h_2(v'_{*}) B(\vert v-v_{*} \vert, \theta )\,d\theta \, d\phi\,dv_{*},\\
\Gamma_{\loss}(h_1, h_2) :=& \int_{\mathbb{R}^{3}}\int_{0}^{2\pi}\int_{0}^{\frac{\pi}{2}}e^{-\frac{\vert v_{*} \vert^{2}}{2}} h_1(v) h_2(v_{*}) B(\vert v-v_{*} \vert, \theta )\,d\theta \, d\phi\,dv_{*}
\end{align*}
We call $\Gamma_{\gain}$ and $\Gamma_{\loss}$ the gain term and the loss term, respectively.

We start from the weighted $L^\infty$ estimate for the nonlinear term.

\begin{lemma} \label{lem:Gamma}
Suppose \eqref{assumption_B1}. Then, for $h_1, h_2 \in L^{\infty}_{\alpha}$, we have
\[
|\Gamma(h_1, h_2)(x, v)| \lesssim |h_1|_{\infty, \alpha} |h_2|_{\infty, \alpha} e^{-\alpha |v|^2} (1 + |v|)^\gamma
\]
for a.e. $(x, v) \in \Omega \times \R^3$.
\end{lemma}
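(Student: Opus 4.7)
My plan is to treat the two pieces $\Gamma_{\gain}$ and $\Gamma_{\loss}$ separately, since the gain term requires invoking energy conservation across the collision while the loss term is immediate.

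For the loss term, I would simply pull $h_1(v)$ out of the integral and estimate $|h_1(v)| \leq |h_1|_{\infty,\alpha} e^{-\alpha|v|^2}$ and $|h_2(v_*)| \leq |h_2|_{\infty,\alpha} e^{-\alpha|v_*|^2}$. After integrating out the angular variables $\theta, \phi$ (which contributes only a harmless constant, using \eqref{assumption_B1}), we are left with
\[
|\Gamma_{\loss}(h_1,h_2)(x,v)| \lesssim |h_1|_{\infty,\alpha}|h_2|_{\infty,\alpha} e^{-\alpha|v|^2} \int_{\R^3} e^{-(\alpha + \frac12)|v_*|^2} |v-v_*|^\gamma\,dv_*,
\]
and then Lemma \ref{lem:estimate_nonlin_1} (with $\beta = \alpha + 1/2 > 0$) delivers the factor $(1+|v|)^\gamma$.

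For the gain term, the key point is the pointwise estimate $|h_1(v')h_2(v'_*)| \leq |h_1|_{\infty,\alpha}|h_2|_{\infty,\alpha} e^{-\alpha(|v'|^2 + |v'_*|^2)}$, followed by the conservation of kinetic energy at a binary collision, $|v'|^2 + |v'_*|^2 = |v|^2 + |v_*|^2$. This converts the post-collisional exponential weight into a pre-collisional one and lets me pull $e^{-\alpha|v|^2}$ outside. The remaining integrand no longer depends on $v'$ or $v'_*$, so integrating the angular variables against \eqref{assumption_B1} again yields a bounded factor, leaving
\[
|\Gamma_{\gain}(h_1,h_2)(x,v)| \lesssim |h_1|_{\infty,\alpha}|h_2|_{\infty,\alpha} e^{-\alpha|v|^2} \int_{\R^3} e^{-(\alpha + \frac12)|v_*|^2} |v-v_*|^\gamma\,dv_*,
\]
which again is handled by Lemma \ref{lem:estimate_nonlin_1}.

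The main conceptual step, and the only non-routine one, is the use of energy conservation to re-express $e^{-\alpha(|v'|^2+|v'_*|^2)}$ in pre-collision variables for $\Gamma_{\gain}$; without this identity, the post-collisional velocities would leave us with a decay estimate in the wrong variables. The constraint $\alpha < (1-\rho)/2 < 1/2$ is not used here, only $\alpha \geq 0$, so that the Gaussian weight $e^{-(\alpha+1/2)|v_*|^2}$ against which Lemma \ref{lem:estimate_nonlin_1} is applied has strictly positive exponent. Finally, combining the gain and loss bounds through the triangle inequality yields the claimed estimate.
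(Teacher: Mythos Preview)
Your proposal is correct and follows essentially the same approach as the paper: decompose $\Gamma$ into gain and loss, use the collision energy identity $|v'|^2+|v'_*|^2=|v|^2+|v_*|^2$ to transfer the Gaussian weight in the gain term, integrate out the angular variables via \eqref{assumption_B1}, and finish with Lemma~\ref{lem:estimate_nonlin_1}. Your remark that only $\alpha\geq 0$ (not $\alpha<(1-\rho)/2$) is needed here is also accurate.
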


\begin{proof}
For the gain term, invoking the relation $|v|^2 + |v_*|^2 = |v'|^2 + |v_*'|^2$, we have
\begin{align*}
&\left| \Gamma_{\gain}(h_1,h_2)(x, v) \right|\\
\lesssim& \left|\int_{\mathbb{R}^{3}}\int_{0}^{2\pi}\int_{0}^{\frac{\pi}{2}}e^{-\frac{|v_{*}|^{2}}{2}}h_1(v')h_2(v'_{*})B(| v-v_{*}|, \theta )d \theta d\phi dv_{*}\right|\\
\lesssim& |h_1|_{\infty,\alpha}|h_2|_{\infty,\alpha}\int_{\mathbb{R}^{3}}\int_{0}^{2\pi}\int_{0}^{\frac{\pi}{2}}e^{-\frac{|v_{*}|^{2}}{2}}e^{-\alpha |v'|^{2}}e^{-\alpha |v_*'|^{2}}B(|v-v_{*}|, \theta )d \theta d\phi dv_{*}\\
=& |h_1|_{\infty,\alpha}|h_2|_{\infty,\alpha}e^{-\alpha |v|^{2}}\int_{\mathbb{R}^{3}}\int_{0}^{2\pi}\int_{0}^{\frac{\pi}{2}}e^{-\frac{|v_{*}|^{2}}{2}}e^{-\alpha |v_*|^{2}}B(|v-v_{*}|, \theta )d \theta d\phi dv_{*}\\
\lesssim& |h_1|_{\infty,\alpha}|h_2|_{\infty,\alpha}e^{-\alpha |v|^{2}}\int_{\mathbb{R}^{3}} e^{-\frac{|v_{*}|^{2}}{2}}e^{-\alpha |v_*|^{2}}|v-v_*|^{\gamma}\,dv_{*} \int_{0}^{\frac{\pi}{2}} \sin{\theta}\cos{\theta}\,d\theta\\
\lesssim& |h_1|_{\infty,\alpha}|h_2|_{\infty,\alpha}e^{-\alpha |v|^{2}}(1+|v|)^{\gamma}.
\end{align*}
Here, we used the estimate in Lemma \ref{lem:estimate_nonlin_1}.

In the same way, we obtain 
\[
\left|\Gamma_{\loss}(h_1,h_2)(x, v) \right| \lesssim |h_1|_{\infty,\alpha}|h_2|_{\infty,\alpha}e^{-\alpha |v|^{2}}(1+|v|)^{\gamma}.
\]

This completes the proof.
\end{proof}

As a corollary, we obtain the following estimate.

\begin{lemma} \label{lem:S Gamma}
Let $\Omega$ be a bounded domain in $\R^3$ and suppose \eqref{assumption_B1}. Then, for $h_1,h_2 \in L^{\infty}_{\alpha}$, we have
\begin{equation*}
|S_{\Omega} \Gamma(h_1,h_2)|_{\infty,\alpha} \lesssim (1+\diam(\Omega))|h_1|_{\infty,\alpha}|h_2|_{\infty,\alpha}.
\end{equation*}    
\end{lemma}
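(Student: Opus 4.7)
The plan is to bound $|S_\Omega \Gamma(h_1, h_2)(x, v)|$ pointwise and then take the essential supremum. First, by the definition of $S_\Omega$, I have
\[
|S_\Omega \Gamma(h_1, h_2)(x, v)| \leq \int_0^{\tau_{x,v}} e^{-\nu(v)s} |\Gamma(h_1, h_2)(x - sv, v)|\,ds.
\]
The crucial input is Lemma \ref{lem:Gamma}, which gives the spatially uniform bound $|\Gamma(h_1, h_2)(y, v)| \lesssim |h_1|_{\infty,\alpha}|h_2|_{\infty,\alpha} e^{-\alpha|v|^2}(1+|v|)^\gamma$. Pulling this factor out of the integral leaves only the elementary $s$-integral $\int_0^{\tau_{x,v}} e^{-\nu(v)s}\,ds$ to estimate.

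Next, I recall the estimate \eqref{est:S infty} from the proof of Lemma \ref{SK decay L infty estimate}, namely
\[
\int_0^{\tau_{x,v}} e^{-\nu(v)s}\,ds \lesssim \min \left\{ 1, \frac{\diam(\Omega)}{|v|} \right\},
\]
which combines the uniform positivity of $\nu$ (giving the bound by $1/\nu(v) \lesssim 1$) with the geometric bound $\tau_{x,v} \leq \diam(\Omega)/|v|$. Substituting, I arrive at
\[
|S_\Omega \Gamma(h_1, h_2)(x, v)| \lesssim |h_1|_{\infty,\alpha}|h_2|_{\infty,\alpha}\, e^{-\alpha|v|^2}\,(1+|v|)^\gamma \min\left\{1, \frac{\diam(\Omega)}{|v|}\right\}.
\]

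Finally, I invoke Lemma \ref{lem:estimate_nonlin_3}, which packages precisely the factor $(1+|v|)^\gamma \min\{1, \diam(\Omega)/|v|\} \leq 1 + \diam(\Omega)$. This yields the pointwise estimate $e^{\alpha|v|^2}|S_\Omega \Gamma(h_1, h_2)(x, v)| \lesssim (1 + \diam(\Omega))|h_1|_{\infty,\alpha}|h_2|_{\infty,\alpha}$ for a.e.\ $(x,v) \in \Omega \times \R^3$ (the set $\{v = 0\}$ being null), and taking the essential supremum concludes the proof. I expect no genuine obstacle: the heavy lifting was already accomplished in Lemma \ref{lem:Gamma} and Lemma \ref{lem:estimate_nonlin_3}, and this proof is essentially a short bookkeeping assembly of those ingredients together with the standard $S_\Omega$ bound.
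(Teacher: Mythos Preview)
Your proof is correct and follows essentially the same approach as the paper: apply Lemma \ref{lem:Gamma} to bound $|\Gamma(h_1,h_2)|$ pointwise, use the estimate \eqref{est:S infty} for the $s$-integral, and finish with Lemma \ref{lem:estimate_nonlin_3}. The paper's proof is just a more compressed version of exactly these three steps.
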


\begin{proof}
By Lemma \ref{lem:Gamma} with Lemma \ref{lem:estimate_nonlin_3} and the estimate \eqref{est:S infty}, we obtain
\begin{align*}
|S_{\Omega}\Gamma(h_1,h_2)(x,v)| \lesssim& |h_1|_{\infty,\alpha}|h_2|_{\infty,\alpha}e^{-\alpha|v|^2}(1+|v|)^{\gamma}\int_{0}^{\tau_{x,v}}e^{-\nu(v)s}\,ds\\
\lesssim& |h_1|_{\infty,\alpha}|h_2|_{\infty,\alpha}e^{-\alpha|v|^2}(1+|v|)^{\gamma} \min \left\{ 1,\frac{\diam(\Omega)}{|v|} \right\}\\
\lesssim& |h_1|_{\infty,\alpha}|h_2|_{\infty,\alpha}e^{-\alpha|v|^2}(1+\diam(\Omega))
\end{align*}
for a.e. $(x, v) \in \Omega \times \R^3$, which is the desired estimate. 
\end{proof}

We next give estimates for the $x$ derivatives of the nonlinear term.

\begin{lemma} \label{lem:Gamma_dx}
Suppose \eqref{assumption_B1}. Also, suppose {\bf Assumption $\Omega$} with uniform circumscribed and interior radii $R$ and $r$ respectively. Then, for $h_1,h_2 \in \hat{L}^{\infty}_{\alpha}$, we have
\begin{align*}
|\nabla_x \Gamma(h_1, h_2)(x, v)| \lesssim& \| h_1 \|_{\infty, \alpha} \| h_2 \|_{\infty, \alpha} e^{-\alpha |v|^2} \frac{R^{\frac{1}{2}}}{d_x^{\frac{1}{2}}} (1 + |v|)^\gamma\\
&+ \| h_1 \|_{\infty, \alpha} \| h_2 \|_{\infty, \alpha} w(x, v)^{-1} e^{-\alpha |v|^2} (1 + |v|)^\gamma
\end{align*}
for a.e. $(x, v) \in \Omega \times \R^3$.
\end{lemma}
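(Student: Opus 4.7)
The plan is to apply $\nabla_x$ to $\Gamma(h_1, h_2) = \pi^{-3/4}(\Gamma_{\gain}(h_1, h_2) - \Gamma_{\loss}(h_1, h_2))$ under the integral sign. Since the $x$-dependence sits only in $h_1$ and $h_2$, the product rule produces four integrands indexed by which function is differentiated and by whether we are in the gain or loss term. In every term I would use $|\nabla_x h_i(x, u)| \leq \|h_i\|_{\infty,\alpha} w(x, u)^{-1} e^{-\alpha |u|^2}$ from the definition of $\|\cdot\|_{\infty, \alpha}$, together with $|h_j(x, u)| \leq \|h_j\|_{\infty,\alpha} e^{-\alpha|u|^2}$ for the undifferentiated factor.

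The two loss pieces are straightforward. For $\nabla_x h_1(x, v)\, h_2(x, v_*)$ the factor $w(x, v)^{-1}$ is independent of the integration variables and pulls out, and the remaining integral $\int e^{-(1/2+\alpha)|v_*|^2} |v-v_*|^\gamma \sin\theta\cos\theta\, d\theta\, d\phi\, dv_*$ is controlled by Lemma \ref{lem:estimate_nonlin_1}, producing the $w(x, v)^{-1}(1+|v|)^\gamma$ term in the claim. For $h_1(x, v)\, \nabla_x h_2(x, v_*)$, I replace $w(x, v_*)^{-1}$ by $R^{1/2} d_x^{-1/2}(1+|v_*|)/|v_*|$ via Proposition \ref{prop:geometric estimate A} (the key feature being that $N(x, v_*)^{-1} \leq R^{1/2} d_x^{-1/2}$ is uniform in the direction of $v_*$), and Lemma \ref{lem:estimate_nonlin_1} then yields the $R^{1/2} d_x^{-1/2}(1+|v|)^\gamma$ contribution.

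The two gain pieces are more delicate. I apply the same gradient bound with argument $v'$ or $v_*'$, and again invoke Proposition \ref{prop:geometric estimate A} to replace $N(x, v')^{-1}$ or $N(x, v_*')^{-1}$ by $R^{1/2} d_x^{-1/2}$, uniformly in direction. Energy conservation $|v'|^2 + |v_*'|^2 = |v|^2 + |v_*|^2$ then folds the remaining $e^{-\alpha(|v'|^2 + |v_*'|^2)}$ into $e^{-\alpha|v|^2} e^{-\alpha|v_*|^2}$, and the whole business reduces to the velocity-integral estimate
\[
\int_{\R^3}\int_0^{2\pi}\int_0^{\pi/2} e^{-(1/2+\alpha)|v_*|^2} \frac{1+|v'|}{|v'|}\, B(|v-v_*|,\theta)\, d\theta\, d\phi\, dv_* \lesssim (1+|v|)^\gamma
\]
together with the analogous inequality with $v_*'$ in place of $v'$.

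The main obstacle is this displayed estimate, because of the potentially singular factor $1/|v'|$. I would split $(1+|v'|)/|v'| = 1 + 1/|v'|$; the constant-$1$ summand reduces immediately to Lemma \ref{lem:estimate_nonlin_1}. For the $1/|v'|$ summand I would parametrize $v' = v + ((v_*-v)\cdot\omega)\omega$ and work in coordinates aligned with $\omega$: writing $c = v\cdot\omega$ and $a = (v_*-v)\cdot\omega \geq 0$, one obtains $|v'|^2 = (a+c)^2 + |v|^2 - c^2$, so $|v'|$ is independent of the two components of $v_*$ perpendicular to $\omega$. Those components integrate cleanly against the Gaussian, leaving a one-dimensional integral in $a$ paired with the spherical integration in $\omega$. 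Using $|v'| \geq \max\{\sqrt{|v|^2-c^2}, |a+c|\}$ together with the $|v-v_*|^\gamma$ factor from $B$, and splitting the angle between $\omega$ and $v$ into small/large regimes and likewise the size of $|v|$, one obtains the desired $(1+|v|)^\gamma$ bound; the $v_*'$ version is parallel via $|v_*'|^2 = (v\cdot\omega)^2 + |v_*|^2 - (v_*\cdot\omega)^2$.
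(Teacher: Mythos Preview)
Your decomposition and your treatment of the two loss pieces coincide with the paper's. The initial move on the gain term---using Proposition~\ref{prop:geometric estimate A} to replace $N(x,v')^{-1}$ by $R^{1/2}d_x^{-1/2}$---also matches.

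Where you diverge is in handling $1/|v'|$. The paper passes to the $\sigma$-formulation $v' = \tfrac{v+v_*}{2} + \tfrac{|v-v_*|}{2}\sigma$ and evaluates the spherical integral in closed form,
\[
\int_{\mathbb S^2}\frac{d\Sigma(\sigma)}{|v'(\sigma)|} = 8\pi\min\Bigl\{\frac{1}{|v+v_*|},\frac{1}{|v-v_*|}\Bigr\} \leq \frac{8\pi}{|v-v_*|},
\]
which reduces the whole gain estimate to $\int e^{-(\frac12+\alpha)|v_*|^2}\bigl(|v-v_*|^\gamma + |v-v_*|^{\gamma-1}\bigr)\,dv_*$ and hence to Lemmas~\ref{lem:estimate_nonlin_1} and~\ref{lem:estimate_nonlin_2}; the $v_*'$ case is then literally $\sigma\mapsto-\sigma$. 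Your Carleman-type route in the $\omega$-variables can also be made to work, but the sketch understates the effort: the factor $|v-v_*|^{\gamma-1}$ arising from $B\,d\theta\,d\phi = C|v-v_*|^{\gamma-1}a\,d\omega$ still depends on $v_\perp$, so the $v_\perp$-integration is only ``clean'' after the crude bound $|v-v_*|^{\gamma-1}\leq a^{\gamma-1}$; the remaining $(\omega,a)$-integral develops a logarithmic blow-up in $a$ as $\omega\to-v/|v|$ (where both $a+c$ and $\sqrt{|v|^2-c^2}$ vanish), which must be absorbed by the $\sin\psi$ from the spherical measure---this is precisely the regime-splitting you allude to, but it is a genuine two-scale argument; and $|v_*'|^2 = (v\cdot\omega)^2 + |v_{\perp,v}+v_\perp|^2$ does depend on $v_\perp$, so the $v_*'$ case is not parallel in your coordinates (though it turns out to be easier). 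The $\sigma$-formula sidesteps all three issues in one line.
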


\begin{proof}
We first treat the gain term. We notice that
\[
\nabla_{x}\Gamma_{\gain}(h_1,h_2)(x, v) = G_1(x, v) + G_2(x, v),
\]
where
\begin{equation*}
\begin{split}
G_1(x, v) :=& \int_{\mathbb{R}^{3}}\int_{0}^{2\pi}\int_{0}^{\frac{\pi}{2}}e^{-\frac{\vert v_{*} \vert^{2}}{2}}\nabla_{x}h_1(v')h_2(v'_{*})B(\vert v-v_{*} \vert, \theta )\,d\theta d\phi dv_{*},\\
G_2(x, v) :=& \int_{\mathbb{R}^{3}}\int_{0}^{2\pi}\int_{0}^{\frac{\pi}{2}}e^{-\frac{\vert v_{*} \vert^{2}}{2}}h_1(v')\nabla_{x}h_2(v'_{*})B(\vert v-v_{*} \vert, \theta)\,d\theta d\phi dv_{*}.
\end{split}
\end{equation*}

We recall that
\begin{align*}
|\nabla_{x} h_j (x, v)| \leq \| h_j \|_{\infty,\alpha} w(x, v)^{-1} e^{-\alpha|v|^{2}} 
\end{align*}
for $h_j \in \hat{L}^{\infty}_{\alpha}$, $j = 1, 2$. Thus, by Proposition \ref{prop:geometric estimate A}, we have
\begin{align*}
|G_1 (x, v)| \leq& \| h_1 \|_{\infty,\alpha} | h_2|_{\infty, \alpha}e^{-\alpha|v|^{2}}\\
& \times \int_{\mathbb{R}^{3}}\int_{0}^{2\pi}\int_{0}^{\frac{\pi}{2}}e^{-\frac{\vert v_{*} \vert^{2}}{2}}e^{-\alpha|v_{*}|^{2}} w(x, v')^{-1} B(\vert v-v_{*} \vert, \theta ) \,d\theta d\phi dv_{*}\\
\lesssim& \frac{R^\frac{1}{2}}{d_x^{\frac{1}{2}}} \| h_1 \|_{\infty,\alpha} \| h_2 \|_{\infty, \alpha} e^{-\alpha|v|^{2}}\\
& \times \int_{\mathbb{R}^{3}} \int_{0}^{2\pi} \int_{0}^{\frac{\pi}{2}}e^{-\frac{\vert v_{*} \vert^{2}}{2}}e^{-\alpha|v_{*}|^{2}} \left( 1 + \frac{1}{|v'|} \right) B(\vert v-v_{*} \vert, \theta ) \,d\theta d\phi dv_{*}.
\end{align*}
In the same way, we have
\begin{align*}
|G_2 (x, v)| \lesssim& \frac{R^\frac{1}{2}}{d_x^{\frac{1}{2}}} \| h_1 \|_{\infty,\alpha} \| h_2 \|_{\infty, \alpha} e^{-\alpha|v|^{2}}\\
& \times \int_{\mathbb{R}^{3}} \int_{0}^{2\pi} \int_{0}^{\frac{\pi}{2}}e^{-\frac{\vert v_{*} \vert^{2}}{2}}e^{-\alpha|v_{*}|^{2}} \left( 1 + \frac{1}{|v'|} \right) B(\vert v-v_{*} \vert, \theta ) \,d\theta d\phi dv_{*}.
\end{align*}

We give an estimate for the above integral factor. We notice that
\begin{align*}
&\int_{\mathbb{R}^{3}}\int_{0}^{2\pi}\int_{0}^{\frac{\pi}{2}}e^{-\frac{|v_{*}|}{2}}e^{-\alpha|v_{*}|^{2}} \left( 1+\frac{1}{|v'|} \right) B(|v-v_{*}|, \theta )\,d \theta d\phi dv_{*}\\
\lesssim& \int_{\mathbb{R}^{3}}\int_{0}^{2\pi}\int_{0}^{\frac{\pi}{2}}e^{-\frac{|v_{*}|^{2}}{2}}e^{-\alpha |v_{*}|^{2}} \left( 1+\frac{1}{|v'|} \right) |v-v_{*}|^{\gamma}\sin{\theta}\cos{\theta} \,d\theta d\phi dv_{*}\\
=& \int_{\mathbb{R}^{3}}\int_{\mathbb{S}^{2}}e^{-\frac{|v_{*}|^{2}}{2}}e^{-\alpha |v_{*}|^{2}} \left( 1+\frac{1}{|v'(\sigma)|} \right) |v-v_{*}|^{\gamma} \,d\Sigma(\sigma) dv_{*}\\
=& \int_{\mathbb{R}^{3}}e^{-\frac{|v_{*}|^{2}}{2}}e^{-\alpha |v_{*}|^{2}}|v-v_{*}|^{\gamma}\int_{\mathbb{S}^{2}} \left( 1+\frac{1}{|v'(\sigma)|} \right) \,d\Sigma(\sigma) dv_{*}.
\end{align*}
Here we introduced the sigma formulation, that is,
\[
    v':=\frac{v+v_*}{2}+\frac{|v_*-v|}{2}\sigma, \, v_{*}':=\frac{v+v_*}{2}-\frac{|v_*-v|}{2}\sigma, \quad \sigma \in \mathbb{S}^{2}.
\]
By taking unit vectors $e_2$, $e_3$ so that the pair $\{ \frac{v+v_*}{|v+v_*|} , e_2, e_3 \}$ forms an orthonormal basis in $\mathbb{R}^3$, and applying the changing of variable:
\[ 
\sigma=\cos{\psi} \frac{v_*+v}{|v_*+v|}+ (\sin{\psi}\cos{\phi}) e_2 + (\sin{\psi}\sin{\phi}) e_3, \quad 0 < \psi < \pi, 0 \leq \phi < 2\pi.
\]
we have
\begin{equation}\label{Estimate 1/v' S_2}
\begin{split}
&\int_{\mathbb{S}^{2}}\frac{1}{| v'(\sigma) |}\,d\Sigma(\sigma)\\
=& \int_{0}^{2\pi}\int_{0}^{\pi}\frac{1}{\sqrt{|\frac{v+v_{*}}{2} |^{2}+|\frac{v-v_{*}}{2}|^{2}+2|\frac{v+v_{*}}{2}|| \frac{v-v_{*}}{2}|\cos{\psi}}}\sin{\psi}\,d\psi d\phi\\
=& \int_{0}^{2\pi}\int_{-1}^{1}\frac{1}{\sqrt{|\frac{v+v_{*}}{2} |^{2}+|\frac{v-v_{*}}{2}|^{2}+2|\frac{v+v_{*}}{2}|| \frac{v-v_{*}}{2}|z}}\,dz d\phi\\
=& 2\pi \int_{-1}^{1}\frac{1}{\sqrt{|\frac{v+v_{*}}{2} |^{2}+|\frac{v-v_{*}}{2}|^{2}+2|\frac{v+v_{*}}{2}|| \frac{v-v_{*}}{2}|z}}\,dz\\
=& 8\pi \min \left\{ \frac{1}{|v+v_{*}|},\frac{1}{|v-v_{*}|} \right\}.
\end{split}
\end{equation}
By the identity \eqref{Estimate 1/v' S_2} with the aid of Lemma \ref{lem:estimate_nonlin_1} and Lemma \ref{lem:estimate_nonlin_2}, we obtain
\begin{equation*}
\begin{split}
&\int_{\mathbb{R}^{3}}e^{-\frac{|v_{*}|^{2}}{2}}e^{-\alpha |v_{*}|^{2}}|v-v_{*}|^{\gamma}\int_{\mathbb{S}^{2}} \left( 1+\frac{1}{|v'|} \right)\,d\Sigma(\sigma) dv_{*}\\
\lesssim &\int_{\mathbb{R}^{3}}e^{-\frac{|v_{*}|^{2}}{2}}e^{-\alpha |v_{*}|^{2}}|v-v_{*}|^{\gamma} \left( 1+\frac{1}{|v-v_{*}|} \right)\,dv_{*}\\
\lesssim &(1+|v|)^{\gamma}.
\end{split}
\end{equation*}

Hence, we have
\begin{align*}
\left| \nabla_{x}\Gamma_{\gain}(h_1,h_2)(x, v) \right| \leq& |G_1(x, v)| + |G_2(x, v)|\\ 
\lesssim& \| h_1 \|_{\infty, \alpha} \| h_2 \|_{\infty, \alpha} e^{-\alpha|v|^{2}} \frac{R^{\frac{1}{2}}}{d_{x}^{\frac{1}{2}}}(1+|v|)^{\gamma*}
\end{align*}
for a.e. $(x, v) \in \Omega \times \R^3$.

We next treat the loss term. We have
\begin{equation*}
\nabla_{x}\Gamma_{\loss}(h_1,h_2)(x, v) = L_1(x, v) + L_2(x, v),
\end{equation*}
where
\begin{align*}
L_1(x, v) :=& \nabla_x h_1(v)\int_{\mathbb{R}^{3}}\int_{0}^{2\pi}\int_{0}^{\frac{\pi}{2}}e^{-\frac{|v_{*}|^{2}}{2}}h_2(v_{*})B(|v-v_{*}|, \theta )d \theta d\phi dv_{*},\\
L_2(x, v) :=& h_1(v)\int_{\mathbb{R}^{3}}\int_{0}^{2\pi}\int_{0}^{\frac{\pi}{2}}e^{-\frac{|v_{*}|^{2}}{2}}\nabla_x h_2(v_{*})B(|v-v_{*}|, \theta )d \theta d\phi dv_{*}.
\end{align*}

For the $L_1$ term, invoking the assumption \eqref{assumption_B1} and Lemma \ref{lem:estimate_nonlin_1}, we get
\begin{align*}
|L_1(x, v)| \lesssim& |\nabla h_1 |_{\infty,\alpha, w} |h_2|_{\infty,\alpha} w(x, v)^{-1} e^{-\alpha|v|^{2}}\\
&\times \int_{\mathbb{R}^{3}}e^{-\frac{\vert v_{*} \vert^{2}}{2}} e^{-\alpha |v_*|^2}|v-v_*|^{\gamma}\,dv_{*} \int_{0}^{2\pi}\int_{0}^{\frac{\pi}{2}}\sin{\theta}\cos{\theta}\,d\theta d\phi\\
\lesssim& \| h_1 \|_{\infty,\alpha} |h_2|_{\infty,\alpha} w(x, v)^{-1} e^{-\alpha|v|^{2}}\int_{\mathbb{R}^{3}}e^{-\frac{\vert v_{*} \vert^{2}}{2}}e^{-\alpha |v_*|^2}|v-v_*|^{\gamma}\,dv_{*}\\
\lesssim& \| h_1 \|_{\infty,\alpha} \| h_2 \|_{\infty,\alpha} w(x, v)^{-1} e^{-\alpha|v|^{2}}(1+|v|)^{\gamma}
\end{align*}
for a.e. $(x, v) \in \Omega \times \R^3$. For the $L_2$ term, applying Proposition \ref{prop:geometric estimate C} and Lemma \ref{lem:estimate_nonlin_1}, we have
\begin{align*}
&|L_2(x, v)|\\ 
\lesssim& |h_1|_{\infty,\alpha} |\nabla_{x}h_2|_{\infty,\alpha,w} e^{-\alpha|v|^{2}}\\
&\times \int_{\mathbb{R}^{3}}\int_{0}^{2\pi}\int_{0}^{\frac{\pi}{2}} e^{-\frac{|v_{*}|^{2}}{2}} e^{-\alpha |v_*|^2} w(x, v_*)^{-1} B(|v-v_{*}|, \theta )\,d\theta d\phi dv_{*}\\
\lesssim& \| h_1 \|_{\infty,\alpha} \| h_2 \|_{\infty,\alpha} e^{-\alpha|v|^{2}}  \int_{\mathbb{R}^{3}}e^{-\frac{|v_{*}|^{2}}{2}}e^{-\alpha |v_*|^2} w(x, v_*)^{-1} |v-v_*|^{\gamma}\,dv_{*}\\
\lesssim& \| h_1 \|_{\infty,\alpha} \| h_2 \|_{\infty,\alpha} e^{-\alpha|v|^{2}} \frac{R^{\frac{1}{2}}}{d_{x}^{\frac{1}{2}}} \int_{\mathbb{R}^{3}} e^{-\frac{|v_{*}|^{2}}{2}}e^{-\alpha |v_*|^2} \ \left( 1 + \frac{1}{|v_*|} \right) |v-v_*|^{\gamma} \,dv_{*}\\
\lesssim& \| h_1 \|_{\infty,\alpha} \| h_2 \|_{\infty,\alpha} e^{-\alpha|v|^{2}}\frac{R^{\frac{1}{2}}}{d_{x}^{\frac{1}{2}}}(1+|v|)^{\gamma}
\end{align*}
for a.e. $(x, v) \in \Omega \times \R^3$. Thus, we obtain 
\begin{align*}
|\nabla_{x}\Gamma_{\loss}(h_1,h_2)(x,v)| \lesssim& \| h_1 \|_{\infty,\alpha} \| h_2 \|_{\infty,\alpha} w(x, v)^{-1} e^{-\alpha|v|^{2}}(1+|v|)^{\gamma}\\
&+ \| h_1 \|_{\infty,\alpha} \| h_2 \|_{\infty,\alpha} e^{-\alpha|v|^{2}}\frac{R^{\frac{1}{2}}}{d_{x}^{\frac{1}{2}}}(1+|v|)^{\gamma}
\end{align*}
for a.e. $(x, v) \in \Omega \times \R^3$.

The estimate for $|\nabla_x \Gamma(h_1, h_2)|$ is obtained by summing the estimates for $|\nabla_x \Gamma_{\gain}(h_1, h_2)|$ and $|\nabla_x \Gamma_{\loss}(h_1, h_2)|$.
\end{proof}

\begin{remark}
In the sigma formulation, $v'_*$ is obtained by replacing $\sigma$ in $v'$ by $-\sigma$. Thus, in the same way as in the computation \eqref{Estimate 1/v' S_2}, we have
\begin{equation}\label{Estimate 1/v_*' S_2}
\int_{\mathbb{S}^{2}}\frac{1}{| v_*'(\sigma) |}\,d\Sigma(\sigma) = 8\pi \min \left\{ \frac{1}{|v+v_{*}|},\frac{1}{|v-v_{*}|} \right\},
\end{equation}
which is used in a proof of Lemma \ref{lem:Gamma_dv}.
\end{remark}

\begin{lemma} \label{lem:S Gamma dx}
Suppose \eqref{assumption_B1}. Also, suppose {\bf Assumption $\Omega$} with uniform circumscribed and interior radii $R$ and $r$ respectively. Then, for $h_1,h_2 \in \hat{L}^{\infty}_{\alpha}$, we have
\[
|\nabla_x S_{\Omega} \Gamma (h_1,h_2)|_{\infty,\alpha,w} \lesssim \left( 1 + \diam(\Omega) + (Rr)^{\frac{1}{2}} \left(1+\frac{R}{r} \right) \right) \| h_1 \|_{\infty,\alpha} \| h_2 \|_{\infty,\alpha}.
\]
\end{lemma}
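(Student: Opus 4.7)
The plan is to differentiate the formula $S_\Omega \Gamma(h_1,h_2)(x,v) = \int_0^{\tau_{x,v}} e^{-\nu(v)s} \Gamma(h_1,h_2)(x-sv,v)\,ds$ in $x$, producing a boundary term coming from $\nabla_x \tau_{x,v}$ and an interior integral term coming from $\nabla_x \Gamma(h_1,h_2)(x-sv,v)$. That is,
\[
\nabla_x S_\Omega \Gamma(h_1,h_2)(x,v) = e^{-\nu(v)\tau_{x,v}}\Gamma(h_1,h_2)(q(x,v),v)\,\nabla_x \tau_{x,v} + \int_0^{\tau_{x,v}} e^{-\nu(v)s} \nabla_x \Gamma(h_1,h_2)(x-sv,v)\,ds.
\]
Using $|\nabla_x\tau_{x,v}|=1/(|v|N(x,v))$, I then estimate each piece separately and multiply by $w(x,v)$ at the end.

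For the boundary term, I would apply Lemma \ref{lem:Gamma} to bound $|\Gamma(h_1,h_2)(q(x,v),v)|\lesssim |h_1|_{\infty,\alpha}|h_2|_{\infty,\alpha}e^{-\alpha|v|^2}(1+|v|)^\gamma$, use $e^{-\nu(v)\tau_{x,v}}\leq 1$, and then note that since $0 \leq \gamma \leq 1$,
\[
\frac{(1+|v|)^\gamma}{|v|N(x,v)} \leq \frac{1+|v|}{|v|N(x,v)} = w(x,v)^{-1}.
\]
This gives a contribution of the form $w(x,v)^{-1}\,\|h_1\|_{\infty,\alpha}\|h_2\|_{\infty,\alpha}e^{-\alpha|v|^2}$, which is absorbed into the $(1+\diam(\Omega))$ part.

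For the interior term, the key observation is that for $0\leq s\leq \tau_{x,v}$ one has $q(x-sv,v)=q(x,v)$, hence $N(x-sv,v)=N(x,v)$ and $w(x-sv,v)=w(x,v)$. I then invoke Lemma \ref{lem:Gamma_dx} to obtain
\[
|\nabla_x\Gamma(h_1,h_2)(x-sv,v)| \lesssim \|h_1\|_{\infty,\alpha}\|h_2\|_{\infty,\alpha}\, e^{-\alpha|v|^2}(1+|v|)^\gamma \left(\frac{R^{1/2}}{d_{x-sv}^{1/2}} + w(x,v)^{-1}\right).
\]
The $w(x,v)^{-1}$ piece pulls out of the integral, and $\int_0^{\tau_{x,v}} e^{-\nu(v)s}\,ds \lesssim \min\{1,\diam(\Omega)/|v|\}$ together with Lemma \ref{lem:estimate_nonlin_3} yields a factor $(1+\diam(\Omega))$. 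For the $R^{1/2}/d^{1/2}_{x-sv}$ piece, Corollary \ref{cor:geometric estimate B} gives $\int_0^{\tau_{x,v}} d_{x-sv}^{-1/2}\,ds \lesssim (r^{1/2}/|v|)(1+R/r)$, producing a factor $(Rr)^{1/2}(1+R/r)/|v|$; combining with $(1+|v|)^\gamma$ and using the elementary bound $(1+|v|)^\gamma/|v| \leq (1+|v|)/|v| \leq w(x,v)^{-1}$ yields the desired $(Rr)^{1/2}(1+R/r)\,w(x,v)^{-1}$ contribution.

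Summing the two estimates, multiplying by $w(x,v)$, and taking the essential supremum produces the advertised bound. The main obstacle, modest as it is, consists in the careful bookkeeping of weights — in particular recognizing that $w(x-sv,v)=w(x,v)$ along the backward characteristic so that the $w^{-1}$ factor commutes with the $s$-integral, and in squeezing the various $(1+|v|)^\gamma$ factors into either $(1+\diam(\Omega))$ (via Lemma \ref{lem:estimate_nonlin_3}) or $w(x,v)^{-1}$ (via $(1+|v|)^\gamma\leq 1+|v|$).
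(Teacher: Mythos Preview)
Your proposal is correct and follows essentially the same route as the paper's proof: the same boundary/interior decomposition of $\nabla_x S_\Omega\Gamma$, the same use of Lemma~\ref{lem:Gamma} for the boundary term, Lemma~\ref{lem:Gamma_dx} for the interior term, Corollary~\ref{cor:geometric estimate B} for the $d_{x-sv}^{-1/2}$ integral, and the observation $N(x-sv,v)=N(x,v)$ together with Lemma~\ref{lem:estimate_nonlin_3} for the $w^{-1}$ piece. The bookkeeping of the $(1+|v|)^\gamma$ factors is handled exactly as in the paper.
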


\begin{proof}
In the same way as in Section \ref{sec:linear}, we have
\begin{align*}
&|\nabla_{x}S_{\Omega} \Gamma(h_1,h_2)(x,v)|\\
\leq &\left|\frac{-n(q(x, v))}{|v|N(x,v)}\Gamma(h_1,h_2)(q(x,v),v)\right|+|S_{\Omega}\nabla_{x} \Gamma(h_1,h_2)(x,v)|.
\end{align*}
By Lemma \ref{lem:Gamma}, we have
\begin{align*}
\left|\frac{-n(q(x, v))}{|v|N(x,v)}\Gamma(h_1,h_2)(x,v)\right| \lesssim &\frac{1}{|v|N(x,v)}|h_1|_{\infty,\alpha}|h_2|_{\infty,\alpha}e^{-\alpha|v|^2}(1+|v|)^{\gamma}\\
\leq &w(x, v)^{-1} \| h_1 \|_{\infty,\alpha} \| h_2 \|_{\infty,\alpha}e^{-\alpha|v|^2}.
\end{align*}
Also, by Lemma \ref{lem:Gamma_dx}, we have
\begin{align*}
&|S_{\Omega}\nabla_{x}\Gamma (h_1,h_2)(x,v)|\\
\leq& \int_{0}^{\tau_{x,v}}e^{-\nu(v)s}\left|\nabla_{x}\Gamma (h_1,h_2)(x-sv,v)\right|\,ds\\
\lesssim& \| h_1 \|_{\infty, \alpha} \| h_2 \|_{\infty, \alpha} e^{-\alpha|v|^{2}} (1+|v|)^{\gamma} R^{\frac{1}{2}}\int_{0}^{\tau_{x,v}} \frac{1}{d_{x-vs}^{\frac{1}{2}}}\,ds\\
&+ \| h_1 \|_{\infty, \alpha} \| h_2 \|_{\infty, \alpha} e^{-\alpha|v|^{2}} (1+|v|)^{\gamma} \int_{0}^{\tau_{x,v}} e^{-\nu(v)s} w(x - sv, v)^{-1}\,ds.
\end{align*}

For the first term in the right hand side, we apply Corollary \ref{cor:geometric estimate B} to obtain
\begin{align*}
&\| h_1 \|_{\infty, \alpha} \| h_2 \|_{\infty, \alpha} e^{-\alpha|v|^{2}} (1+|v|)^{\gamma} R^{\frac{1}{2}}\int_{0}^{\tau_{x,v}} \frac{1}{d_{x-vs}^{\frac{1}{2}}}\,ds\\
\lesssim& \| h_1 \|_{\infty, \alpha} \| h_2 \|_{\infty, \alpha} e^{-\alpha|v|^{2}} \frac{1+|v|}{|v|} (Rr)^{\frac{1}{2}} \left( 1 + \frac{R}{r} \right)\\
\leq& \| h_1 \|_{\infty, \alpha} \| h_2 \|_{\infty, \alpha} e^{-\alpha|v|^{2}} w(x, v)^{-1} (Rr)^{\frac{1}{2}} \left( 1 + \frac{R}{r} \right).
\end{align*}
For the second term in the right hand side, noticing that $N(x - sv, v) = N(x, v)$ for all $0 < s < \tau_{x, v}$ and recalling Lemma \ref{lem:estimate_nonlin_3} and the estimate \eqref{est:S infty}, we have
\begin{align*}
&\| h_1 \|_{\infty, \alpha} \| h_2 \|_{\infty, \alpha} e^{-\alpha|v|^{2}} (1+|v|)^{\gamma} \int_{0}^{\tau_{x,v}} e^{-\nu(v)s} w(x - sv, v)^{-1}\,ds\\
\lesssim& \| h_1 \|_{\infty, \alpha} \| h_2 \|_{\infty, \alpha} w(x, v)^{-1} e^{-\alpha|v|^{2}} (1 + \diam(\Omega)).
\end{align*}

This completes the proof.
\end{proof}

Next we estimate the $v$ derivative of the nonlinear term.

\begin{lemma} \label{lem:Gamma_dv}
Suppose \eqref{assumption_B1}. Also, suppose {\bf Assumption $\Omega$} with uniform circumscribed and interior radii $R$ and $r$ respectively. Then, for $h_1,h_2 \in \hat{L}^{\infty}_{\alpha}$, we have
\begin{align*}
|\nabla_v \Gamma(h_1, h_2)(x, v)| \lesssim& \| h_1 \|_{\infty, \alpha} \| h_2 \|_{\infty, \alpha} e^{-\alpha |v|^2} \frac{R^{\frac{1}{2}}}{d_x^{\frac{1}{2}}} (1 + |v|)^\gamma\\
&+ \| h_1 \|_{\infty, \alpha} \| h_2 \|_{\infty, \alpha} w(x, v)^{-1} e^{-\alpha |v|^2} (1 + |v|)^\gamma\\
&+ \| h_1 \|_{\infty, \alpha} \| h_2 \|_{\infty, \alpha} e^{-\alpha |v|^2}
\end{align*}
for a.e. $(x, v) \in \Omega \times \R^3$.
\end{lemma}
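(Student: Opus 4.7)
The plan is to mimic the proof of Lemma \ref{lem:Gamma_dx}, splitting $\Gamma = \pi^{-3/4}(\Gamma_{\gain} - \Gamma_{\loss})$ and estimating the velocity derivatives of each piece separately. Unlike the spatial derivative case, the key new difficulty is that $\nabla_v$ interacts nontrivially with the Gaussian $e^{-|v_*|^2/2}$ (for the gain term, after a change of variable) and with the cross section $B(|v-v_*|,\theta)$ (for the loss term), and that in $\Gamma_{\gain}$ the post-collisional velocities $v',v'_*$ genuinely depend on $v$.

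For the gain term, I would change variables $u=v_*-v$ so that $\omega$ and $\theta$ depend only on $u$. Then $v'=v+(u\cdot\omega)\omega$ and $v'_*=v+u-(u\cdot\omega)\omega$ are affine in $v$ with unit Jacobian. Differentiating under the integral and changing back to $v_*$ produces three terms:
\begin{align*}
G_1(x,v) &:= -\int_{\R^3}\int_0^{2\pi}\int_0^{\pi/2} v_* e^{-|v_*|^2/2} h_1(x,v')h_2(x,v'_*) B(|v-v_*|,\theta)\,d\theta d\phi dv_*,\\
G_2(x,v) &:= \int e^{-|v_*|^2/2}\nabla_v h_1(x,v')h_2(x,v'_*) B\,d\theta d\phi dv_*,\\
G_3(x,v) &:= \int e^{-|v_*|^2/2} h_1(x,v')\nabla_v h_2(x,v'_*) B\,d\theta d\phi dv_*.
\end{align*}
For $G_1$, the extra factor $|v_*|$ is absorbed by the Gaussian (via $|v_*|\leq 1+|v_*|$) and, using $|v'|^2+|v'_*|^2=|v|^2+|v_*|^2$ together with Lemma \ref{lem:estimate_nonlin_1}, yields a bound $\lesssim|h_1|_{\infty,\alpha}|h_2|_{\infty,\alpha}e^{-\alpha|v|^2}(1+|v|)^\gamma$, which is dominated by the second term of the statement since $w(x,v)\leq 1$. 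For $G_2$ and $G_3$, I bound $|\nabla_v h_j|$ by $\|h_j\|_{\infty,\alpha}w(x,\cdot)^{-1}e^{-\alpha|\cdot|^2}$ at $v'$ or $v'_*$, apply Proposition \ref{prop:geometric estimate A} at $x$ to replace $N(x,v')^{-1}$ (resp.\ $N(x,v'_*)^{-1}$) by $R^{1/2}/d_x^{1/2}$, which leaves us with integrals of $(1+1/|v'|)|v-v_*|^\gamma$ and $(1+1/|v'_*|)|v-v_*|^\gamma$ against the Gaussian. The angular parts are handled by the identities \eqref{Estimate 1/v' S_2} and \eqref{Estimate 1/v_*' S_2}, after which Lemma \ref{lem:estimate_nonlin_1} (for the $1$ piece) and Lemma \ref{lem:estimate_nonlin_2} (for the $1/|v-v_*|$ piece) deliver the factor $(1+|v|)^\gamma$, giving the first term in the statement.

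For the loss term I differentiate directly, obtaining a piece where $\nabla_v$ hits $h_1(x,v)$ and a piece where it hits $B(|v-v_*|,\theta)$. The first piece uses $|\nabla_v h_1|\leq\|h_1\|_{\infty,\alpha}w(x,v)^{-1}e^{-\alpha|v|^2}$ and Lemma \ref{lem:estimate_nonlin_1} to produce the $w^{-1}e^{-\alpha|v|^2}(1+|v|)^\gamma$ contribution. For the second, $|\nabla_v B|\lesssim|v-v_*|^{\gamma-1}\sin\theta\cos\theta$, so Lemma \ref{lem:estimate_nonlin_2} yields $\lesssim\|h_1\|_{\infty,\alpha}\|h_2\|_{\infty,\alpha}e^{-\alpha|v|^2}$, which is exactly the third term in the statement. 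Summing all contributions and using $(1+|v|)^\gamma\leq w(x,v)^{-1}(1+|v|)^\gamma$ to consolidate yields the asserted estimate.

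The main obstacle is the gain term: without the change of variable $u=v_*-v$, the $\omega$-dependence on $v$ makes the $\nabla_v$ computation unwieldy, and even after the change of variable one still needs the angular identities from the $\sigma$-formulation to control the singular factors $1/|v'|$ and $1/|v'_*|$ arising from $w(x,v')^{-1}$, $w(x,v'_*)^{-1}$. The rest reduces to careful bookkeeping using the preliminary Lemmas \ref{lem:estimate_nonlin_1}, \ref{lem:estimate_nonlin_2} and the geometric bound in Proposition \ref{prop:geometric estimate A}.
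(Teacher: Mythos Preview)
Your proof is correct and follows essentially the same route as the paper: split $\Gamma=\pi^{-3/4}(\Gamma_{\gain}-\Gamma_{\loss})$, control the pieces where $\nabla_v$ lands on $h_1$ or $h_2$ via Proposition~\ref{prop:geometric estimate A} together with the angular identities \eqref{Estimate 1/v' S_2}--\eqref{Estimate 1/v_*' S_2} and Lemmas~\ref{lem:estimate_nonlin_1}--\ref{lem:estimate_nonlin_2}, and treat the loss term exactly as you describe (your two loss pieces are the paper's $L_3$ and $L_4$). The one technical difference is in the gain term: the paper differentiates directly in the $\sigma$-formulation, so its third piece comes from $\nabla_v|v-v_*|^\gamma$ and is handled by Lemma~\ref{lem:estimate_nonlin_2} (yielding the plain $e^{-\alpha|v|^2}$ contribution), whereas your substitution $u=v_*-v$ makes the cross section $v$-independent and instead produces the term $G_1$ from differentiating the shifted Gaussian $e^{-|v+u|^2/2}$. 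Both arrangements work; yours has the minor advantage that $\partial v'/\partial v$ and $\partial v'_*/\partial v$ become the identity, while in the $\sigma$-formulation these Jacobians are merely bounded matrices, a point the paper leaves implicit.
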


\begin{proof}
For the gain term, by the sigma formulation, we have
\begin{align*}
\int_{0}^{2\pi}\int_{0}^{\frac{\pi}{2}}h_1(v')h_2(v'_{*})B(|v-v_{*}|, \theta)\,d \theta d\phi =& \int_{\mathbb{S}^2}h_1(v')h_2(v'_{*})\frac{B(|v-v_{*}|, \theta )}{2\sin{\theta}\cos{\theta}}\,d\Sigma(\sigma)\\
=& \frac{C}{2} \int_{\mathbb{S}^2}h_1(v')h_2(v'_{*})|v - v_*|^\gamma\,d\Sigma(\sigma).
\end{align*}
Here, we used the assumption \eqref{assumption_B1}. Thus, we obtain
\begin{align*}
|\nabla_v\Gamma_{\gain}(h_1,h_2)| \lesssim \int_{\mathbb{R}^{3}}e^{-\frac{|v_{*}|^{2}}{2}}\left|\nabla_v\int_{\mathbb{S}^2}h_1(v')h_2(v'_{*})|v-v_*|^{\gamma}\,d\Sigma(\sigma) \right|\,dv_{*}.
\end{align*}

Notice that, by the identity $|v|^2 + |v_*|^2 = |v'|^2 + |v_*'|^2$, we have
\begin{align*}
&\left|\nabla_v\int_{\mathbb{S}^2}h_1(v')h_2(v'_{*})|v-v_*|^{\gamma}\,d\Sigma(\sigma)\right|\\
\leq& \int_{\mathbb{S}^2}\left|\nabla_vh_1(v')\right||h_2(v'_{*})||v-v_*|^{\gamma}\,d\Sigma(\sigma) + \int_{\mathbb{S}^2}|h_1(v')|\left|\nabla_vh_2(v'_{*})\right||v-v_*|^{\gamma}\,d\Sigma(\sigma)\\
&+ \int_{\mathbb{S}^2}|h_1(v')| |h_2(v'_{*})||v-v_*|^{\gamma-1}\,d\Sigma(\sigma)\\
\lesssim& \| h_1 \|_{\infty,\alpha} |h_2|_{\infty,\alpha} |v-v_*|^{\gamma} \int_{\mathbb{S}^2} w(x, v')^{-1} e^{-\alpha(|v'|^{2} + |v_*'|^{2})} \,d\Sigma(\sigma) \\
&+ |h_1|_{\infty,\alpha} \| h_2 \|_{\infty,\alpha} |v-v_*|^{\gamma} \int_{\mathbb{S}^2} w(x, v_*')^{-1} e^{-\alpha(|v'|^{2} + |v_*'|^{2})} \,d\Sigma(\sigma)
\\&+ |h_1|_{\infty,\alpha} |h_2|_{\infty,\alpha} |v-v_*|^{\gamma-1} \int_{\mathbb{S}^2} e^{-\alpha(|v'|^{2} + |v_*'|^{2})} \,d\Sigma(\sigma)\\
\lesssim& \| h_1 \|_{\infty,\alpha} |h_2|_{\infty,\alpha} |v-v_*|^{\gamma} e^{-\alpha(|v|^{2} + |v_*|^{2})}\int_{\mathbb{S}^2} w(x, v')^{-1} \,d\Sigma(\sigma) \\
&+ |h_1|_{\infty,\alpha} \| h_2 \|_{\infty,\alpha} |v-v_*|^{\gamma} e^{-\alpha(|v|^{2} + |v_*|^{2})} \int_{\mathbb{S}^2} w(x, v_*')^{-1} \,d\Sigma(\sigma)
\\&+ |h_1|_{\infty,\alpha} |h_2|_{\infty,\alpha} |v-v_*|^{\gamma-1} e^{-\alpha(|v|^{2} + |v_*|^{2})}.
\end{align*}
For the first term, by Proposition \ref{prop:geometric estimate A} and the identity \eqref{Estimate 1/v' S_2}, we have
\begin{align*}
\int_{\mathbb{S}^2} w(x, v')^{-1} \,d\Sigma(\sigma) =& \int_{\mathbb{S}^2}\frac{1+|v'|}{|v'|N(x,v')}\,d\Sigma(\sigma) \\
\lesssim& \frac{R^{\frac{1}{2}}}{d_x^{\frac{1}{2}}} \int_{\mathbb{S}^2} \left(1 + \frac{1}{|v'|} \right)\,d\Sigma(\sigma)\\
\lesssim& \frac{R^{\frac{1}{2}}}{d_x^{\frac{1}{2}}}\left( 1+\frac{1}{|v-v_{*}|} \right).
\end{align*}
For the second term, by the identity \eqref{Estimate 1/v_*' S_2}, we have 
\begin{align*}
\int_{\mathbb{S}^2} w(x, v_*')^{-1} \,d\Sigma(\sigma)
\lesssim \frac{R^{\frac{1}{2}}}{d_x^{\frac{1}{2}}} \left( 1+\frac{1}{|v-v_{*}|} \right).
\end{align*}
Hence we have
\begin{align*}
&\left|\nabla_v \Gamma_{\gain}(h_1, h_2)(x, v) \right|\\
\lesssim& \| h_1 \|_{\infty,\alpha} \| h_2 \|_{\infty,\alpha}e^{-\alpha|v|^{2}} \frac{R^{\frac{1}{2}}}{d_x^{\frac{1}{2}}}\int_{\mathbb{R}^{3}}e^{-\frac{|v_{*}|^{2}}{2}}|v-v_*|^{\gamma}e^{-\alpha|v_*|^{2}} \left( 1+\frac{1}{|v-v_{*}|} \right) dv_{*}\\
&+ |h_1|_{\infty,\alpha}|h_2|_{\infty,\alpha}e^{-\alpha|v|^{2}}\int_{\mathbb{R}^{3}}e^{-\frac{|v_{*}|^{2}}{2}}e^{-\alpha|v_*|^{2}} |v-v_*|^{\gamma-1}dv_{*}\\
\lesssim&  \| h_1 \|_{\infty,\alpha} \| h_2 \|_{\infty,\alpha} e^{-\alpha|v|^{2}} \frac{R^{\frac{1}{2}}}{d_x^{\frac{1}{2}}} (1+|v|)^{\gamma} + \| h_1 \|_{\infty,\alpha} \| h_2 \|_{\infty,\alpha}e^{-\alpha|v|^{2}}
\end{align*}
for a.e. $(x, v) \in \Omega \times \R^3$.

For the loss term, we notice that 
\begin{align*}
\Gamma_{\loss}(h_1,h_2) = h_1(v)\int_{\mathbb{R}^{3}}e^{-\frac{|v_{*}|^{2}}{2}}h_2(v_{*}) \int_0^{2\pi} \int_0^{\frac{\pi}{2}} B(|v-v_{*}|, \theta )\,d\theta d\phi \,dv_{*},
\end{align*}
and, under the assumption \eqref{assumption_B1},
\[
\int_0^{2\pi} \int_0^{\frac{\pi}{2}} B(|v-v_{*}|, \theta )\,d\theta d\phi = C \pi |v - v_*|^\gamma.
\]
Thus, the gradient of $\Gamma_{\loss}$ with respect to the $v$ variable is described as below:
\begin{equation*}
\nabla_{v}\Gamma_{\loss}(h_1,h_2)(x,v) = L_3(x, v) + L_4(x, v),
\end{equation*}
where
\begin{align*}
L_3(x, v) :=& C \pi \nabla_{v}h_1(v) \int_{\mathbb{R}^{3}}e^{-\frac{|v_{*}|^{2}}{2}}h_2(v_{*}) |v - v_*|^\gamma \,dv_{*},\\
L_4(x, v) :=& C \gamma \pi h_1(v) \int_{\mathbb{R}^{3}}e^{-\frac{|v_{*}|^{2}}{2}}h_2(v_{*}) |v - v_*|^{\gamma - 2}(v - v_*) dv_{*}.
\end{align*}

For the $L_3$ term, we apply Lemma \ref{lem:estimate_nonlin_1} to obtain
\begin{align*}
|L_3(x, v)|\lesssim& |\nabla_v h_1|_{\infty,\alpha,w} |h_2|_{\infty,\alpha} w(x, v)^{-1} e^{-\alpha|v|^{2}}\int_{\mathbb{R}^{3}}e^{-\frac{\vert v_{*} \vert^{2}}{2}}e^{-\alpha|v_*|^2}|v-v_*|^{\gamma}dv_{*}\\
\lesssim& \| h_1 \|_{\infty,\alpha} |h_2|_{\infty,\alpha} w(x, v)^{-1} e^{-\alpha|v|^{2}}(1+|v|)^{\gamma}.
\end{align*}
Also, for the $L_4$ term, we apply Lemma \ref{lem:estimate_nonlin_2} to get
\begin{align*}
|L_4(x, v)| \lesssim& |h_1|_{\infty,\alpha} |h_2|_{\infty,\alpha} e^{-\alpha|v|^2} \int_{\mathbb{R}^{3}}e^{-\frac{|v_{*}|^{2}}{2}}e^{-\alpha|v_*|^2}| v-v_{*} |^{\gamma-1}\,dv_{*}\\
\lesssim& \| h_1 \|_{\infty,\alpha} \| h_2 \|_{\infty,\alpha} e^{-\alpha|v|^2}.
\end{align*}
Hence we conclude that
\begin{align*}
|\nabla_{v}\Gamma_{\loss}(h_1,h_2)(x,v)| \lesssim& \| h_1 \|_{\infty,\alpha} |h_2|_{\infty,\alpha} w(x, v)^{-1} e^{-\alpha|v|^{2}}(1+|v|)^{\gamma}\\ 
&+ \| h_1 \|_{\infty,\alpha} \| h_2 \|_{\infty,\alpha} e^{-\alpha|v|^2}
\end{align*}
for a.e. $(x, v) \in \Omega \times \R^3$.

Combining the estimates for $|\nabla_v \Gamma_{\gain}(h_1, h_2)|$ and $|\nabla_v \Gamma_{\loss}(h_1, h_2)|$, we obtain the desired estimate.
\end{proof}

\begin{lemma} \label{lem:S Gamma dv}
Suppose \eqref{assumption_B1}. Also, suppose {\bf Assumption $\Omega$} with uniform circumscribed and interior radii $R$ and $r$ respectively. Then, for $h_1,h_2 \in \hat{L}^{\infty}_{\alpha}$, we have
\begin{align*}
|\nabla_v S_{\Omega} \Gamma (h_1,h_2)|_{\infty,\alpha,w} \lesssim \left(1+\diam(\Omega) + (Rr)^{\frac{1}{2}} \left( 1+\frac{R}{r} \right) \right) \| h_1 \|_{\infty,\alpha} \| h_2 \|_{\infty,\alpha}.
\end{align*} 
\end{lemma}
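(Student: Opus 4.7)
The plan is to mirror the structure used for $\nabla_v S_\Omega K h$ in Lemma \ref{SK decay v gradient L infty estimate} and for $\nabla_x S_\Omega \Gamma$ in Lemma \ref{lem:S Gamma dx}. Differentiating
\[
S_\Omega \Gamma(h_1, h_2)(x, v) = \int_0^{\tau_{x,v}} e^{-\nu(v)s} \Gamma(h_1, h_2)(x - sv, v)\,ds
\]
with respect to $v$ produces four contributions: a boundary term coming from $\nabla_v \tau_{x, v}$, a term carrying the factor $-(\nabla_v \nu(v))\, s$, a term $-s\, \nabla_x \Gamma(h_1, h_2)(x-sv, v)$, and finally the inner term $\nabla_v \Gamma(h_1, h_2)(x-sv, v)$. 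I will estimate each in turn and combine them.

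For the boundary term I would use the identity $|\nabla_v \tau_{x, v}| = |x - q(x, v)|/(|v|^2 N(x, v))$ together with the pointwise bound of Lemma \ref{lem:Gamma}, and then absorb $\nu(v) |x-q(x,v)|/|v|$ into the exponential factor $e^{-\nu(v)\tau_{x,v}}$ exactly as in the proof of Lemma \ref{SK decay v gradient L infty estimate}. This yields a bound of order $|h_1|_{\infty,\alpha} |h_2|_{\infty,\alpha} e^{-\alpha|v|^2}(1+|v|)^\gamma/(|v| N(x, v))$, which after multiplication by $w(x, v) = |v| N(x, v)/(1+|v|)$ collapses to a multiple of $\|h_1\|_{\infty,\alpha}\|h_2\|_{\infty,\alpha} e^{-\alpha|v|^2}$, using $(1+|v|)^{\gamma-1} \leq 1$. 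The $\nabla_v \nu$ term and the $s\, \nabla_x \Gamma$ term both benefit from the elementary inequality $s e^{-\nu(v)s} \lesssim e^{-\nu(v)s/2}$ guaranteed by Property A, so they reduce to $S_\Omega \Gamma$ and $S_\Omega \nabla_x \Gamma$ type expressions that are already controlled by Lemma \ref{lem:S Gamma} and by the computation in the proof of Lemma \ref{lem:S Gamma dx}, producing bounds of the form $(1+\diam(\Omega) + (Rr)^{1/2}(1 + R/r))\|h_1\|_{\infty,\alpha}\|h_2\|_{\infty,\alpha}$.

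The genuinely new work lies in the last contribution. Here I would plug the pointwise bound of Lemma \ref{lem:Gamma_dv} into the $s$-integral and treat its three pieces separately. The piece carrying $R^{1/2}/d_{x-sv}^{1/2}$ is handled by Corollary \ref{cor:geometric estimate B}, which gives $R^{1/2}\int_0^{\tau_{x,v}} d_{x-sv}^{-1/2}\,ds \lesssim (Rr)^{1/2}(1+R/r)/|v|$; combining with the $(1+|v|)^\gamma$ factor and the inequalities $N(x, v) \leq 1$ and $(1+|v|)^\gamma \leq 1+|v|$ produces a bound by $(Rr)^{1/2}(1+R/r)\, w(x, v)^{-1}$. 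The piece containing $w(x-sv, v)^{-1}$ uses the observation that $N(x-sv, v) = N(x, v)$ for $0 < s < \tau_{x, v}$, since $q(x-sv, v) = q(x, v)$; this lets $w(x-sv, v)^{-1} = w(x, v)^{-1}$ be pulled out of the integral, after which Lemma \ref{lem:estimate_nonlin_3} turns $(1+|v|)^\gamma \min\{1, \diam(\Omega)/|v|\}$ into the clean factor $1 + \diam(\Omega)$. The remaining piece is controlled by $\int_0^{\tau_{x,v}} e^{-\nu(v)s}\,ds \lesssim 1 \leq w(x, v)^{-1}$.

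The main bookkeeping difficulty I anticipate is to check, piece by piece, that every factor of $(1+|v|)^\gamma/|v|$ emerging from the nonlinear kernel or its velocity gradient combines cleanly with the geometric factors to reproduce at worst the weight $w(x, v)^{-1}$, with no uncontrolled growth in $|v|$ left over; the use of $N(x-sv, v) = N(x, v)$ and Lemma \ref{lem:estimate_nonlin_3} is what keeps this honest. Once the four contributions are added, multiplied through by $w(x, v)$, and the essential supremum over $(x, v) \in \Omega \times \R^3$ is taken, the stated estimate drops out.
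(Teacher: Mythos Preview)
Your proposal is correct and follows essentially the same approach as the paper: the same four-term decomposition of $\nabla_v S_\Omega \Gamma$, the same treatment of the boundary term via $|\nabla_v \tau_{x,v}|$ and Lemma~\ref{lem:Gamma}, the same reduction of the $s$-weighted terms through $s e^{-\nu(v)s}\lesssim e^{-\nu(v)s/2}$, and the same use of Lemma~\ref{lem:Gamma_dv} together with Corollary~\ref{cor:geometric estimate B}, the invariance $N(x-sv,v)=N(x,v)$, and Lemma~\ref{lem:estimate_nonlin_3} for the inner $\nabla_v\Gamma$ contribution. Your write-up is in fact slightly more explicit about the bookkeeping than the paper's own proof.
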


\begin{proof}
We follow the proof of Lemma \ref{SK decay v gradient L infty estimate}. We have
\begin{align*}
\nabla_{v}S_{\Omega} \Gamma (h_1,h_2)(x,v) = S_{\Omega, v} \Gamma (h_1, h_2)(x, v) - (\nabla_v \nu(v)) S_{\Omega,s} \Gamma (h_1, h_2)(x, v)\\
-S_{\Omega, s} \nabla_x \Gamma (h_1, h_2)(x, v) + S_\Omega \nabla_v \Gamma (h_1, h_2)(x, v).
\end{align*}

For the first term, by Lemma \ref{lem:Gamma}, we have
\begin{align*} 
|S_{\Omega, v} \Gamma (h_1, h_2)(x, v)| \lesssim& \frac{\vert x-q(x,v) \vert  }{\vert v \vert^{2}N(x,v)}e^{-\nu(v)\tau_{x,v}}|h_1|_{\infty,\alpha}|h_2|_{\infty,\alpha}e^{-\alpha|v|^{2}}(1+|v|)^{\gamma}\\
\lesssim& \frac{1}{|v|N(x,v)} |h_1|_{\infty,\alpha} |h_2|_{\infty,\alpha} e^{-\alpha|v|^{2}}(1+|v|)^{\gamma}\\
\lesssim& w(x, v)^{-1} |h_1|_{\infty,\alpha} |h_2|_{\infty,\alpha} e^{-\alpha|v|^{2}}.
\end{align*}
For the second term, by the estimate \eqref{gradient nu decay estimate}, we get
\begin{align*} 
\left| (\nabla_v \nu(v)) S_{\Omega, s} \Gamma (h_1, h_2)(x, v) \right| \lesssim& |\Gamma(h_1, h_2)|_{\infty, \alpha} e^{-\alpha |v|^2} \int_0^{\tau_{x, v}} e^{-\frac{\nu(v)}{2}s}\,ds\\
\lesssim& |h_1|_{\infty, \alpha} |h_2|_{\infty, \alpha} e^{-\alpha |v|^2} \frac{\diam(\Omega)}{|v|}\\
\lesssim& \diam(\Omega) \| h_1 \|_{\infty, \alpha} \| h_2 \|_{\infty, \alpha} w(x, v)^{-1} e^{-\alpha |v|^2}.
\end{align*}

For the third term, we apply Lemma \ref{lem:Gamma_dx} to obtain
\begin{align*}
&\left| S_{\Omega, s} \nabla_{x}\Gamma (h_1,h_2)(x,v) \right|\\
\lesssim& \| h_1 \|_{\infty,\alpha} \| h_2 \|_{\infty,\alpha} e^{-\alpha|v|^{2}} R^{\frac{1}{2}}(1+|v|)^{\gamma} \int_{0}^{\tau_{x,v}} s e^{-\nu(v)s} \frac{1}{d_{x-sv}^{\frac{1}{2}}}\,ds\\
&+ \| h_1 \|_{\infty,\alpha} \| h_2 \|_{\infty,\alpha} e^{-\alpha|v|^{2}} (1+|v|)^{\gamma} \int_{0}^{\tau_{x,v}} s e^{-\nu(v)s} w(x - sv, v)^{-1}\,ds\\
\lesssim& \left( 1 + \diam(\Omega) + (Rr)^{\frac{1}{2}} \left( 1 + \frac{R}{r} \right) \right) \| h_1 \|_{\infty,\alpha} \| h_2 \|_{\infty,\alpha} w(x, v)^{-1} e^{-\alpha|v|^{2}}.
\end{align*}

For the last term, we use Lemma \ref{lem:Gamma_dv} to see that
\begin{align*}
&|S_\Omega \nabla_v \Gamma (h_1, h_2)(x, v)|\\ 
\lesssim& \| h_1 \|_{\infty,\alpha} \| h_2 \|_{\infty,\alpha} e^{-\alpha|v|^{2}} R^{\frac{1}{2}}(1+|v|)^{\gamma} \int_{0}^{\tau_{x,v}} s e^{-\nu(v)s} \frac{1}{d_{x-sv}^{\frac{1}{2}}}\,ds\\
&+ \| h_1 \|_{\infty,\alpha} \| h_2 \|_{\infty,\alpha} e^{-\alpha|v|^{2}} (1+|v|)^{\gamma} \int_{0}^{\tau_{x,v}} s e^{-\nu(v)s} w(x - sv, v)^{-1}\,ds\\
&+ \| h_1 \|_{\infty,\alpha} \| h_2 \|_{\infty,\alpha} e^{-\alpha|v|^{2}} \int_{0}^{\tau_{x,v}} s e^{-\nu(v)s} w(x - sv, v)^{-1}\,ds\\
\lesssim& \left( 1 + \diam(\Omega) + (Rr)^{\frac{1}{2}} \left( 1 + \frac{R}{r} \right) \right) \| h_1 \|_{\infty,\alpha} \| h_2 \|_{\infty,\alpha} w(x, v)^{-1} e^{-\alpha|v|^{2}}.
\end{align*}

This completes the proof.
\end{proof}

Lemma \ref{lem:bilinear_est} follows from Lemma \ref{lem:S Gamma}, Lemma \ref{lem:S Gamma dx} and Lemma \ref{lem:S Gamma dv}.

We are ready to prove Theorem \ref{main theorem nonlinear}. First, define $f_0:=0$ and consider the following iteration scheme:
\begin{equation*} 
\begin{cases}
 &v \cdot \nabla_{x}f_{i+1} + \nu(v)f_{i+1} = Kf_{i+1} + \Gamma(f_{i},f_{i}), \quad (x, v) \in \Omega \times \R^3,\\
 &f_i(x,v)=g(x,v), \quad (x,v) \in \Gamma^-
 \end{cases}
\end{equation*}
for $i\geq 1$. By Lemma \ref{lem:existence linear} and Lemma \ref{lem:bilinear_est}, we have 
\begin{align*}
\| f_{i+1} \|_{\infty,\alpha} \lesssim& \| S_{\Omega}\Gamma(f_{i},f_{i})| \_{\infty,\alpha} + \| Jg \|_{\infty,\alpha}\\
\lesssim& \left(1+\diam(\Omega)+(Rr)^{\frac{1}{2}}\left(1+\frac{R}{r}\right)\right) \| f_i \|_{\infty,\alpha} \| f_i \|_{\infty,\alpha} + \| Jg \|_{\infty,\alpha}.
\end{align*}
Hence, by the assumption that $\diam(\Omega)$ and $(Rr)^{\frac{1}{2}}\left(1+\frac{R}{r}\right)$ is small enough, we have 
\[
||f_{i+1}||_{\infty,\alpha} 
\leq C ||f_i||_{\infty,\alpha} ||f_i||_{\infty,\alpha} +C||Jg||_{\infty,\alpha}.
\]
for some constant $C>0$. Without loss of generality, we assume that $C$ is a constant any arbitrary large real number which is greater than $1$.

We further take $\delta > 0$ so small that$\delta < 1/4C^2$ to achieve 
\[
\| f_1 \|_{\infty, \alpha} \leq \frac{1}{4C} \leq \frac{1}{2C}.
\] 
Also, if $\| f_i \|_{\infty, \alpha} \leq 1/2C$ for some $i$, we have 
\begin{align*}
||f_{i+1}||_{\infty,\alpha} 
\leq &\frac{1}{2} ||f_i||_{\infty,\alpha} +\frac{1}{4C} \leq \frac{1}{2C}.
\end{align*}
Hence, by induction, the $\hat{L}^\infty_\alpha$ norm of the sequence $f_i$ is uniformly bounded by $1/2C$. Furthermore, by substituting $f_{i+1}-f_{i}$ for \eqref{Nonlinear Boltmann iteration scheme}, we have 
\begin{equation*}
\begin{cases}
 &v \cdot \nabla_{x}(f_{i+1}-f_{i}) + \nu(v)(f_{i+1}-f_{i})\\
 &\quad = K(f_{i+1}-f_{i})+\Gamma(f_{i},f_{i})-\Gamma(f_{i-1},f_{i-1}), \quad (x, v) \in \Omega \times \R^3,\\
 &f_{i+1}(x,v)-f_i(x,v)=0, (x,v) \in \Gamma^-.
 \end{cases}
 \end{equation*}
Notice that $\Gamma(f_{i},f_{i})-\Gamma(f_{i-1},f_{i-1})=\Gamma(f_{i},f_{i}-f_{i-1})+\Gamma(f_{i}-f_{i-1},f_{i-1})$. Hence, we have
\begin{align*}
\| f_{i+1}-f_{i} \|_{\infty,\alpha} \lesssim& \left( 1 + \diam(\Omega)+(Rr)^{\frac{1}{2}} \left( 1+\frac{R}{r} \right) \right)\\
& \times ( \| f_i \|_{\infty,\alpha} \| f_{i}-f_{i-1} \|_{\infty,\alpha} + \| f_{i}-f_{i-1} \|_{\infty,\alpha} \| f_{i-1} \|_{\infty,\alpha})\\ \lesssim&
\left( 1 + \diam(\Omega)+(Rr)^{\frac{1}{2}} \left( 1+\frac{R}{r} \right) \right) \frac{1}{C} \| f_{i}-f_{i-1} \|_{\infty,\alpha}.
\end{align*}
In the last line, we use the uniform bound $1/2C$ of $\| f_{i+1} \|_{\infty,\alpha}$.
With small enough $\diam(\Omega)$ and $(Rr)^{\frac{1}{2}} \left( 1+\frac{R}{r} \right)$ we have 
\begin{equation*} 
\| f_{i+1}-f_{i} \|_{\infty,\alpha} \lesssim \frac{1}{C} \| f_{i}-f_{i-1} \|_{\infty,\alpha}.
\end{equation*}
With large $C$, we finally deduced that
\begin{equation*} 
\| f_{i+1}-f_{i} \|_{\infty,\alpha} \leq\frac{1}{2} \| f_{i}-f_{i-1} \|_{\infty,\alpha}.
\end{equation*}

Hence we achieve the convergence in $\hat{L}^{\infty}_{\alpha}$ of the iteration scheme \eqref{Nonlinear Boltmann iteration scheme} when $\diam(\Omega)$, $(Rr)^{\frac{1}{2}} \left( 1+\frac{R}{r} \right)$ and $\| Jg \|_{\infty,\alpha}$ is small enough. 

%%%%%%%%%%%%%%%%%%%%%%%%%%%%%%%%%%%%%%%%%%%%%%%%%%%%%%
\section*{Acknowledgement}
%%%%%%%%%%%%%%%%%%%%%%%%%%%%%%%%%%%%%%%%%%%%%%%%%%%%%%

 I. Chen is supported in part by NSTC with the grant number 108-2628-M-002-006-MY4, 111-2918-I-002-002-, and 112-2115-M-002-009-MY3. C. Hsia is supported in part by NSTC with the grant number 109-2115-M-002-013-MY3. D. Kawagoe is supported in part by JSPS KAKENHI grant number 20K14344. D. Kawagoe and J. Su is supported in part by the National Taiwan University-Kyoto University Joint Funding.

\appendix

\section{The inclusion $\hat{L}^{\infty}_{\alpha} \subset W^{1, p}$}

In this section, we provide a detail proof of the fact that the weighted space $\hat{L}^{\infty}_{\alpha}$ is in fact in $W^{1,p}$ for $1 \leq p < 3$.

\begin{proposition} \label{appendix 1}
Suppose that the domain $\Omega$ satisfies the uniform circumscribed condition with radius $R$. Then, given $0< \alpha < \infty$ we have $\hat{L}^{\infty}_{\alpha}\subseteq W^{1,p}$ for $1\leq p < 3$.
\end{proposition}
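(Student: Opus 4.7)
The plan is to use that membership in $\hat L^\infty_\alpha$ gives the pointwise bounds $|f(x,v)|\lesssim \|f\|_{\infty,\alpha}e^{-\alpha|v|^2}$ and $|\nabla_x f(x,v)|+|\nabla_v f(x,v)|\lesssim \|f\|_{\infty,\alpha}\,w(x,v)^{-1}e^{-\alpha|v|^2}$, where $w(x,v)^{-1}=(1+|v|)/(|v|N(x,v))$. The $L^p$ bound for $f$ itself is immediate from boundedness of $\Omega$ together with the Gaussian factor in $v$, so the whole task reduces to showing
\[
I_p := \int_\Omega\!\!\int_{\R^3}\frac{(1+|v|)^p}{|v|^p N(x,v)^p}e^{-p\alpha|v|^2}\,dv\,dx < \infty \qquad (1\leq p <3).
\]

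Switching to spherical coordinates $v=r\omega$ cleanly factorizes
\[
I_p = \int_0^\infty (1+r)^p r^{2-p} e^{-p\alpha r^2}\,dr \,\cdot\, \int_\Omega\!\int_{S^2} N(x,\omega)^{-p}\,d\omega\,dx,
\]
and the radial factor is finite exactly when $p<3$ (the borderline singularity is $r^{2-p}$ at the origin; the Gaussian handles $r\to\infty$). For the spatial-angular factor I would swap Fubini and, for each fixed $\omega$, foliate $\Omega$ by chords $x=q+s\omega$ with $q\in\Gamma^-_\omega$ and $s\in(0,L_\omega(q))$. The Jacobian is $dx=N(q,\omega)\,ds\,dS_q$, and the chord length satisfies $L_\omega(q)\lesssim R\,N(q,\omega)$ by applying Proposition \ref{prop:geometric estimate C} at the boundary point $q$ with velocity $\omega$. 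Hence
\[
\int_\Omega N(x,\omega)^{-p}\,dx = \int_{\Gamma^-_\omega} N(q,\omega)^{1-p}\,L_\omega(q)\,dS_q \lesssim R\int_{\partial\Omega}|n(q)\cdot\omega|^{2-p}\,dS_q.
\]

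The main obstacle is controlling this last boundary integral uniformly in $\omega$, since $|n(q)\cdot\omega|$ vanishes along the tangency locus on $\partial\Omega$. I would resolve it by pushing forward through the Gauss map $q\mapsto n(q)$: the uniform circumscribed sphere condition forces both principal curvatures of $\partial\Omega$ to be at least $1/R$ (the domain sits inside its osculating sphere of radius $R$ at every boundary point), so the Gaussian curvature satisfies $K\geq 1/R^2$, the Gauss map is a bijection onto $S^2$, and
\[
\int_{\partial\Omega}|n(q)\cdot\omega|^{2-p}\,dS_q \leq R^2 \int_{S^2}|n\cdot\omega|^{2-p}\,dS_n = 4\pi R^2 \int_0^1 u^{2-p}\,du,
\]
which is finite and independent of $\omega$ precisely when $p<3$. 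The singularity threshold here matches the one in the radial integral, confirming that $p<3$ is the sharp exponent. Combining the two pieces yields $I_p<\infty$, giving $f,\nabla_x f,\nabla_v f\in L^p(\Omega\times\R^3)$ and therefore $\hat L^\infty_\alpha\subseteq W^{1,p}(\Omega\times\R^3)$ for all $1\leq p<3$.
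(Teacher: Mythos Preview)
Your argument is correct and shares the same skeleton as the paper's: reduce to $w^{-1}e^{-\alpha|v|^2}\in L^p$, foliate $\Omega$ by chords in the direction of $v$, absorb one factor of $N$ from the Jacobian, and use Proposition~\ref{prop:geometric estimate C} to trade the chord length for another factor of $N$. The difference is purely in the order of integrations at the end. The paper swaps to $\int_{\partial\Omega}\int_{\Gamma^-_x}\cdots\,dv\,d\Sigma(z)$ and then introduces spherical coordinates in $v$ with polar axis $-n(z)$, so that $N(z,v)=\cos\theta$ and the whole thing reduces directly to $\int_0^1 t^{2-p}\,dt$ without any geometric input beyond Proposition~\ref{prop:geometric estimate C}. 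You instead separate the radial and angular parts of $v$ first, foliate for fixed $\omega$, and then seek a bound on $\int_{\partial\Omega}|n(q)\cdot\omega|^{2-p}\,dS_q$ that is \emph{uniform} in $\omega$, which you obtain via the Gauss map and the curvature lower bound coming from the circumscribed sphere. This is valid (the circumscribed condition does force $\kappa_i\ge 1/R$, hence $K\ge 1/R^2$ and the Gauss map is a diffeomorphism), but it is more than you need: one further application of Fubini, swapping $\int_{S^2}$ and $\int_{\partial\Omega}$, gives $\int_{\partial\Omega}\int_{S^2}|n(q)\cdot\omega|^{2-p}\,d\omega\,dS_q$, and the inner integral is independent of $q$ by rotational symmetry of $S^2$. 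That recovers the paper's computation and avoids invoking any curvature estimates or the Gauss map at all.
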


\begin{proof}
Given a function $h \in \hat{L}^{\infty}_{\alpha}$, it suffices to show that $h$, $\nabla_x h$ and $\nabla_v h$ belong to $L^p$. Without loss of generality, it suffices to show that $w(x, v)^{-1}$ belongs to $L^p$.

By performing the change of variable $x=z+sv/|v|$, where $z \in \Gamma^-_v$, we have
\begin{align*}
&\int_{\mathbb{R}^3}\int_{\Omega} w(x, v)^{-p} e^{-p\alpha|v|^2}\,dxdv\\
\lesssim& \int_{\mathbb{R}^3}\int_{\Gamma^-_v}\int_{0}^{q(x,-v)} \left( 1+\frac{1}{|v|^p} \right) \frac{1}{N(x,v)^{p-1}}e^{-p\alpha|v|^2}\,dsd\Sigma(z) dv.
\end{align*}
By Proposition \ref{prop:geometric estimate C}, we have
\begin{align*}
&\int_{\mathbb{R}^3}\int_{\Gamma^-_v}\int_{0}^{q(x,-v)} \left( 1+\frac{1}{|v|^p} \right) \frac{1}{N(x,v)^{p-1}}e^{-p\alpha|v|^2}\,dsd\Sigma(z) dv\\
\lesssim& \int_{\mathbb{R}^3}\int_{\Gamma^-_v} \left( 1+\frac{1}{|v|^p} \right) \frac{1}{N(x,v)^{p-2}}e^{-p\alpha|v|^2}\,d\Sigma(z) dv.
\end{align*}
We then change the order of integration and introduce the spherical coordinates to obtain
\begin{align*}
&\int_{\mathbb{R}^3}\int_{\Gamma^-_v} \left( 1+\frac{1}{|v|^p} \right)\frac{1}{N(x,v)^{p-2}}e^{-p\alpha|v|^2}\,d\Sigma(z) dv\\
=& \int_{\partial \Omega}\int_{\Gamma^-_x} \left( 1+\frac{1}{|v|^p} \right)\frac{1}{N(x,v)^{p-2}}e^{-p\alpha|v|^2}\,dvd\Sigma(z)\\
=& \int_{\partial \Omega}\int_{0}^{\infty}\int_0^{2 \pi} \int_0^{\frac{\pi}{2}} \left( 1+\frac{1}{r^p} \right) \frac{1}{\cos^{p-2}{\theta}}e^{-p\alpha r^2}r^2 \sin{\theta} \,d\theta d\phi drd\Sigma(z)\\
\lesssim& \int_{\partial \Omega}\int_{0}^{\infty}\int_0^{2 \pi} \int_0^{\frac{\pi}{2}} \left( r^2+\frac{1}{r^{p-2}} \right) \frac{1}{\cos^{p-2}{\theta}}e^{-p\alpha r^2}\sin{\theta} \,d\theta d\phi drd\Sigma(z)\\
\lesssim& \int_{\partial \Omega}\int_{0}^{\infty}\int_0^{\frac{\pi}{2}} \left( r^2+\frac{1}{r^{p-2}} \right) \frac{1}{\cos^{p-2}{\theta}}e^{-p\alpha r^2}\sin{\theta} \,d\theta  drd\Sigma(z)\\
=& \int_{\partial \Omega}\int_{0}^{\infty} \left( r^2+\frac{1}{r^{p-2}} \right) e^{-p\alpha r^2}\int_0^{1} \frac{1}{t^{p-2}}\, dt drd\Sigma(z)\\
<& \infty.
\end{align*}
This completes the proof.
\end{proof}

\end{document}